\newcommand{\C}{\mathbb{C}}
\newcommand{\xx}{\mathbf{x}}
\newcommand{\yy}{\mathbf{y}}
\newcommand{\ttheta}{\boldsymbol{\theta}}
\newcommand{\mm}{\mathfrak{m}}
\newcommand{\DR}{\mathrm{DR}}
\newcommand{\FT}{\mathrm{FT}}
\newcommand{\HY}{\mathrm{HY}}
\newcommand{\Alt}{\mathrm{Alt}}
\newcommand{\Sym}{\mathrm{Sym}}
\renewcommand{\DH}{\mathrm{DH}}
\newcommand{\sgn}{\mathrm{sgn}}
\newcommand{\one}{\mathbf{1}}
\newcommand{\CE}{\mathcal{E}}
\newcommand{\CF}{\mathcal{F}}
\newcommand{\HH}{\mathrm{HH}}
\newcommand{\HHH}{\mathrm{HHH}}
\newcommand{\Ker}{\mathrm{Ker}}
\newcommand{\Hom}{\mathrm{Hom}}
\newcommand{\bFT}{\mathbf{FT}}
\newcommand{\hook}{\mathrm{hook}}
\newcommand{\cJ}{\mathcal{J}}
\newcommand{\cL}{\mathcal{L}}
\newcommand{\SBim}{\mathrm{SBim}}
\numberwithin{equation}{section}
\newtheorem{theorem}{Theorem}[section]
\newtheorem{proposition}[theorem]{Proposition}
\newtheorem{corollary}[theorem]{Corollary}
\newtheorem{lemma}[theorem]{Lemma}
\theoremstyle{definition}
\newtheorem{remark}[theorem]{Remark}
\newtheorem{definition}[theorem]{Definition}
\newtheorem{conjecture}[theorem]{Conjecture}
\title{Tautological classes for $(n,n+1)$ torus knots}
\author{Eugene Gorsky}
\address{Department of Mathematics, University of California Davis, One Shields Avenue, Davis CA 95616 USA}
\email{egorskiy@ucdavis.edu}
\author{Anton Mellit}
\address{Department of Mathematics, University of Vienna, Oskar-Morgenstern-Platz 1, Vienna 1090, Austria}
\email{anton.mellit@univie.ac.at}
\begin{document}

\begin{abstract}
We construct an explicit isomorphism between the HOMFLY-PT homology of $(n,n+1)$ torus knots and the direct sum of hook isotypic components of the space of diagonal coinvariants. As a consequence, we compute the action of tautological classes in   HOMFLY-PT homology of $(n,n+1)$ torus knots and prove that it extends to an action of the Lie algebra of Hamiltonian vector fields on the plane. We also compute the action of differentials $d_N$ in Rasmussen spectral sequences from HOMLFY-PT to $\mathfrak{gl}(N)$ homology of  $(n,n+1)$ torus knots. 
\end{abstract}

\maketitle

\section{Introduction}

Consider the polynomial ring $\C[x_1,\ldots,x_n,y_1,\ldots,y_n]$ with an action of $S_n$ which permutes the variables $x_i$ and $y_i$ simultaneously. The space of diagonal coinvariants $\DR_n$ is defined as 
$$
\DR_n=\C[x_1,\ldots,x_n,y_1,\ldots,y_n]/\C[\xx,\yy]^{S_n}_+
$$
where $\C[\xx,\yy]^{S_n}_+$ is the ideal generated by symmetric polynomials of positive degree. This space was introduced by Garsia and Haiman in \cite{Garsia} and has been a subject of very intensive research in algebraic combinatorics \cite{Shuffle,ShufflePf}, algebraic geometry \cite{HaimanInv,CO,Hikita} and representation theory \cite{Gordon}. In particular, the bigraded Hilbert series of its sign component $\DR_n^{\sgn}$ is known as the {\em $q,t$-Catalan polynomial} $c_n(q,t)$ \cite{Garsia}.

In \cite{G}, the first author conjectured that the dimensions of triply graded reduced Khovanov-Rozansky homology  $\overline{\HHH}$ (also known as reduced HOMFLY-PT homology) of $(n,n+1)$ torus knots are given by the $q,t$-Catalan polynomials, so we have an isomorphism of bigraded vector spaces
\begin{equation}
\label{eq: catalan intro}
\overline{\HHH}^0(T(n,n+1))\simeq \DR_n^{\sgn}.
\end{equation}
Here and below the superscript $0$ refers to zero Hochschild degree (or $a$-degree).
This conjecture was confirmed by Hogancamp in \cite{H}.
The proof in \cite{H} was based on an intricate recursion satisfied by the graded dimensions on both sides, and did not give any explicit construction of the isomorphism \eqref{eq: catalan intro}.
Similarly, one cannot directly access any homology classes in $\overline{\HHH}^0(T(n,n+1))$ besides a few very special ones. Our first main result addresses these issues.

Observe that there is a natural braid-like cobordism from the link $T(n,n)$ to the knot $T(n,n+1)$ which adds extra $(n-1)$ positive crossings. For such cobordisms, Elias and Krasner \cite{EK} defined a   map in HOMFLY homology:
$$
\pi: \HHH(T(n,n))\to \HHH(T(n,n+1)).
$$
Recall that $\HHH(T(n,n))$ is naturally a module over $\C[\xx]=\C[x_1,\ldots,x_n]$.

\begin{theorem}
\label{thm: main iso}
a) We have an isomorphism
$$
\HHH^0(T(n,n))/(\xx)\HHH^0(T(n,n))\simeq \DR_n^{\sgn}
$$
where $(\xx)=(x_1,\ldots,x_n)\subset \C[\xx]$ is the maximal ideal.

b) The cobordism map $\pi: \HHH^0(T(n,n))\to \HHH^0(T(n,n+1))$ is surjective and induces a
 grading-preserving isomorphism
\begin{equation}
\label{eq: pi prime intro}
\pi':\DR_n^{\sgn}\to \overline{\HHH}^0(T(n,n+1))
\end{equation}

c) More generally, the cobordism map $\pi: \HHH(T(n,n))\to \HHH(T(n,n+1))$ is surjective and we have an isomorphism
$$
\DR_n^{\hook}\simeq \HHH(T(n,n))/(\xx)\HHH(T(n,n))\simeq \overline{\HHH}(T(n,n+1))
$$
where $\DR_n^{\hook}$ is a direct sum of hook isotypic components in $\DR_n$, see Section \ref{sec: hooks}.
\end{theorem}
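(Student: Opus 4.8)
The plan is to take part (a) as the technical core, to deduce part (b) from it together with the structure of the Elias--Krasner cobordism map, and to obtain part (c) by running the same argument in every Hochschild degree. Throughout, the main engine is the $\yy$-ified HOMFLY homology $\HY$ and its relationship, in the case of the full twist $\mathrm{FT}_n$, with coherent sheaves on $\mathrm{Hilb}^n(\C^2)$ and on Haiman's isospectral Hilbert scheme $X_n$; an alternative, purely combinatorial route is to carry the $\C[\xx]$-module structure through the Elias--Hogancamp recursion for torus link homology and match the outcome with the standard recursion for $c_n(q,t)$.

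For (a), the first step is to compute $\HY^0(T(n,n))$ as an explicit bigraded $\C[\xx,\yy]$-module, where the $x_i$ act through the $n$ strands of $\mathrm{FT}_n$ and the $y_i$ through the $\yy$-ification. Since adding a full twist corresponds to twisting by the determinant line bundle $\cL=\mathcal{O}(1)$, $\HY^0(T(n,n))$ should be identified with the global sections of $\cL$ (tensored with the appropriate sheaf) over $X_n$; making this precise requires establishing the relevant part of the Gorsky--Hogancamp picture in this special case. The second step is to specialize: $\HHH^0(T(n,n))=\HY^0(T(n,n))/(\yy)$ by definition, and one wants a further quotient by $(\xx)$. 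Geometrically, imposing $\xx=\yy=0$ forces all $n$ points to the origin, so $\HY^0(T(n,n))/(\xx,\yy)$ is the restriction of the sections to the punctual Hilbert scheme $Z_n$ (the zero fiber of the Hilbert--Chow morphism), which by Haiman's results equals $\DR_n^{\sgn}$ with matching bigradings. To conclude $\HHH^0(T(n,n))/(\xx)\cong\DR_n^{\sgn}$ one then needs the regularity statement that $(\xx)$ and $(\yy)$ act as part of a regular sequence on $\HY^0(T(n,n))$, equivalently a $\mathrm{Tor}$-vanishing, so that $\big(\HY^0(T(n,n))/(\yy)\big)/(\xx)=\HY^0(T(n,n))/(\xx,\yy)$. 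I expect this regularity/Tor-vanishing to be the main obstacle.

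Given (a), part (b) goes as follows. The Elias--Krasner map $\pi$ intertwines the $\C[\xx]$-action on $\HHH^0(T(n,n))$ with multiplication by the single marked-point variable on $\HHH^0(T(n,n+1))$, since the cobordism merges the $n$ components of $T(n,n)$ into the knot $T(n,n+1)$; passing to reduced homology of $T(n,n+1)$, which sets that variable to $0$, shows that the composite $\HHH^0(T(n,n))\to\overline{\HHH}^0(T(n,n+1))$ annihilates $(\xx)\HHH^0(T(n,n))$ and hence factors, via (a), through a map $\pi'\colon\DR_n^{\sgn}\to\overline{\HHH}^0(T(n,n+1))$. It remains to see that $\pi'$ is an isomorphism. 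Surjectivity of $\pi$, hence of $\pi'$, is the key point: the image of $\pi$ is a $\C[\xx]$-submodule, and a Nakayama/generation argument reduces surjectivity to the statement that a small set of explicit low-degree classes lies in the image and generates it; alternatively, surjectivity can be read off from the cone description of the iterated cobordism map. Injectivity of $\pi'$ then follows by comparing bigraded dimensions: both sides have bigraded dimension $c_n(q,t)$, the left by (a) and the right by Hogancamp's theorem \cite{H}, so the bigraded surjection $\pi'$ is an isomorphism. One may instead identify $\Ker\pi$ directly with $(\xx)\HHH^0(T(n,n))$ together with the reduction relation, which reproves \cite{H}; in either case surjectivity of the unreduced map comes for free.

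For (c), one repeats the argument in each Hochschild ($a$-)degree. On the geometric side the positive-$a$ part corresponds, through Haiman's description of the Procesi bundle, to tensoring $\cL$ by the exterior powers of the tautological rank-$n$ bundle on $X_n$, and restriction to $Z_n$ produces exactly the non-sign hook isotypic components $V_{(n-k,1^k)}$, with the Hochschild degree recording $k$; this yields $\HHH(T(n,n))/(\xx)\HHH(T(n,n))\cong\DR_n^{\hook}$, using the same regularity input as in (a). The cobordism map $\pi$ is surjective in each Hochschild degree by the same Nakayama argument, and injectivity of the induced map follows either by matching bigraded dimensions against the known Poincaré series of $\overline{\HHH}(T(n,n+1))$ or, preferably, by the direct identification of $\Ker\pi$ from the cone description; together these give the chain of isomorphisms $\DR_n^{\hook}\simeq\HHH(T(n,n))/(\xx)\HHH(T(n,n))\simeq\overline{\HHH}(T(n,n+1))$. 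The principal difficulties, in order of severity, are the identification of $\HY^0(T(n,n))$ and its Hochschild-graded refinement with the geometric model, together with the attendant regularity, and then the surjectivity of $\pi$.
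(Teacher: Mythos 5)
Your overall architecture (work with $\HY$, use the $y$-ified Elias--Krasner map $\pi_y$, show it factors through the quotient, then compare dimensions) matches the paper, but the decisive step is different and is where your proposal has a real gap.

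\textbf{A phantom obstacle.} You flag as ``the main obstacle'' a regularity/Tor-vanishing statement needed to identify $\bigl(\HY^0(T(n,n))/(\yy)\bigr)/(\xx)$ with $\HY^0(T(n,n))/(\xx,\yy)$. No such input is needed: for any module $M$ and ideals $I,J$ one has $(M/IM)/J(M/IM)=M/(I+J)M$. The paper simply uses Theorem~\ref{thm: GH}, which identifies $\HY^0(\FT_n)\simeq J$ (the antisymmetric ideal) and $\HHH^0(\FT_n)\simeq J/(\yy)J$, so part (a) reduces to the elementary Lemma~\ref{lem: catalan} that $J/\mm J\simeq\DR_n^{\sgn}$. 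Your geometric picture (sections of a line bundle on the isospectral Hilbert scheme) is essentially a restatement of this via Haiman, but it adds machinery without removing work.

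\textbf{The genuine gap.} Both you and the paper know that $\overline{\pi}_y$ factors through $\DR_n^{\sgn}$, and both use Hogancamp's dimension theorem to finish. The difference is which of injectivity/surjectivity of $\pi'$ is established directly. You propose to prove surjectivity of $\pi$, invoking ``a Nakayama/generation argument'' or ``the cone description of the iterated cobordism map,'' and then deduce injectivity from the dimension count. But you never say what the generators in the image are or why they generate, and the cone description does not obviously produce surjectivity---this is precisely the hard part, and as written it is a placeholder, not an argument. The paper goes the other way: it proves \emph{injectivity} of $\pi'$ directly, and the mechanism is the one ingredient missing entirely from your proposal, namely the compatibility of $\pi_y$ with the tautological classes $\CE_k$ together with Haiman's Operator Conjecture (Theorem~\ref{thm: operator conjecture} / Corollary~\ref{cor: operator conjecture sgn} / Lemma~\ref{lem: cogenerate}). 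Concretely: $\pi_y$ commutes with all $\CF_k$ (being built from skein maps), hence by the hard Lefschetz structure also with $\Phi$ and $\CE_k$; $\pi'(\Delta(\yy))=\overline{\pi}_y(\delta)\neq 0$ by Lemma~\ref{lem: delta}; and for any nonzero $f\in\DR_n^{\sgn}$ the Operator Conjecture gives a polynomial $\psi$ with $\psi(E_1,\dots,E_{n-1})f=c\Delta(\yy)$, $c\neq0$, so $\psi(\CE_1,\dots,\CE_{n-1})\pi'(f)=c\,\overline{\pi}_y(\delta)\neq0$, forcing $\pi'(f)\neq0$. Surjectivity then comes for free from the dimension match. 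Without this (or an actual proof of surjectivity), your argument does not close. The same remark applies to part (c), where the paper additionally uses the Rasmussen differentials $d_N$ and Lemma~\ref{lem: hooks generate} in place of Lemma~\ref{lem: cogenerate}; your appeal to the Procesi bundle and ``the same Nakayama argument'' inherits the same gap.
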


To prove Theorem \ref{thm: main iso}, we use a deformation of $\HHH$ known as the $y$-ified triply graded homology $\HY$ \cite{GH}. For knots $K$ such as $T(n,n+1)$ the $y$-ified homology is essentially the same as $\overline{\HHH}$:
\begin{equation}
\label{eq: knots intro}
\HY(K)=\HHH(K)[y]=\overline{\HHH}(K)[x,y,\theta]
\end{equation}
so we do not get any additional information. However, for the $n$-component link $T(n,n)$ the $y$-ified homology (computed in \cite{GH}) has a much richer structure, and we have the following $y$-ified version of Theorem \ref{thm: main iso}.

\begin{theorem}
\label{thm: main iso y}
a) We have an isomorphism
$$
\HY^0(T(n,n))/(\xx,\yy)\HY^0(T(n,n))\simeq \DR_n^{\sgn}
$$
where $(\xx,\yy) \subset \C[\xx,\yy]$ is the maximal ideal. More generally,
$$
\HY(T(n,n))/(\xx,\yy)\HY(T(n,n))\simeq \DR_n^{\hook}.
$$

b) There is  a map in $y$-ified homology (induced by a $y$-ified version of Elias-Krasner cobordism map) $\pi_y: \HY(T(n,n))\to \HY(T(n,n+1))$ which is surjective and induces the isomorphism \eqref{eq: pi prime intro}. 
\end{theorem}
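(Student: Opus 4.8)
The plan is to combine the explicit computation of $\HY(T(n,n))$ from \cite{GH} with Haiman's geometry of the Hilbert scheme of points in the plane for part (a), and with a $y$-ified Elias--Krasner cobordism map for part (b).

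For part (a), I would first extract from \cite{GH} a presentation of $\HY^0(T(n,n))$, and of the full $\HY(T(n,n))$, as a bigraded module over $\C[\xx,\yy]=\C[x_1,\dots,x_n,y_1,\dots,y_n]$, in which $x_i$ and $y_i$ act by the dot operators on the $i$-th strand. In the form I anticipate, this module is a sign-twisted incarnation of the pushforward to $(\C^2)^n$ of the structure sheaf of Haiman's isospectral Hilbert scheme $X_n\subset(\C^2)^n\times\mathrm{Hilb}^n(\C^2)$, with the $x_i,y_i$ acting as tautological coordinates; I would try to make the identification canonical by writing down an explicit $\C[\xx,\yy]$-linear antisymmetrization map from a shift of $\C[\xx,\yy]^{\sgn}$ into $\HY^0(T(n,n))$. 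The assertion then reduces to computing the fiber of this module at the origin of $(\C^2)^n$, i.e.\ $\HY^0(T(n,n))\otimes_{\C[\xx,\yy]}\C$, and identifying it with $\DR_n^{\sgn}$ using Haiman's flatness and vanishing theorems \cite{HaimanInv} --- his geometric realization of the diagonal coinvariants --- with the bigrading pinned down by the $q,t$-Catalan polynomial $c_n(q,t)$. The general statement follows from the same argument applied to $\HY(T(n,n))$: its Hochschild grading corresponds to tensoring with exterior powers of the tautological rank-$n$ bundle, so by Haiman's character formulas the fibers in Hochschild degree $j$ realize the hook isotypic components, and $\HY(T(n,n))/(\xx,\yy)\HY(T(n,n))\simeq\bigoplus_k\DR_n^{(n-k,1^k)}=\DR_n^{\hook}$. (Should the geometric model be awkward to make precise, one can instead read the bigraded dimension of $\HY^0(T(n,n))/(\xx,\yy)$ directly off \cite{GH}, match it with $c_n(q,t)$, and produce the isomorphism by hand from the antisymmetrization map.)

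For part (b), I would construct $\pi_y$ by $y$-ifying the Elias--Krasner map. The cobordism $T(n,n)\to T(n,n+1)$ of the introduction factors as a composite of $n-1$ elementary positive-crossing cobordisms, each inducing a chain map of Rouquier complexes by \cite{EK}; I would verify that these chain maps lift to the $y$-ified complexes of \cite{GH} (for which it suffices to treat the elementary pieces), producing $\pi_y\colon\HY(T(n,n))\to\HY(T(n,n+1))$. Since $T(n,n+1)$ is a knot, $\HY(T(n,n+1))=\overline{\HHH}(T(n,n+1))[x,y,\theta]$ by \eqref{eq: knots intro}, so reducing $\pi_y$ modulo $(\yy)$ recovers the Elias--Krasner map $\pi$, and hence modulo $(\xx,\yy)$ recovers --- under the identification of part (a) --- the map $\pi'$ of \eqref{eq: pi prime intro}. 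Surjectivity of $\pi$ is Theorem \ref{thm: main iso}(b); since $\HHH(T(n,n))=\HY(T(n,n))/(\yy)$, the map $\pi_y$ is surjective modulo $(\yy)$, and its cokernel $C$ is a graded $\C[\xx,\yy]$-module bounded below in internal ($q$-)degree on which each $y_i$ raises degree, so $C/(\yy)C=0$ forces $C=0$ degree by degree. Reducing modulo $(\xx,\yy)$ then gives a grading-preserving surjection $\DR_n^{\sgn}\twoheadrightarrow\overline{\HHH}^0(T(n,n+1))$, which is an isomorphism because the two sides have equal finite bigraded dimensions (part (a) together with Hogancamp's theorem \cite{H}); this isomorphism is $\pi'$.

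The main obstacle, I expect, is the canonicity demanded by part (a): matching Hilbert series is mere bookkeeping, but the later sections on tautological classes and Hamiltonian vector fields need an identification of $\HY(T(n,n))$ with the \cite{GH}/Haiman model that respects the $\C[\xx,\yy]$-module structure, the hook grading, and the operators that become the tautological classes, since only such an identification lets $\pi'$ transport these structures to the HOMFLY side. This requires a rather tight hold on the \cite{GH} complex and its geometric meaning. By comparison the $y$-ification of the Elias--Krasner map is routine, though not automatic --- one still must track the degree shifts and signs of the elementary chain maps and check their compatibility with the $y$-differential, which is the natural place for any residual difficulty to hide.
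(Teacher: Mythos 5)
Your plan for part (b) is circular. You propose to establish surjectivity of $\pi_y$ by first noting ``surjectivity of $\pi$ is Theorem \ref{thm: main iso}(b)'' and then bootstrapping via a Nakayama-type argument. But in the paper Theorem \ref{thm: main iso} is explicitly a \emph{consequence} of Theorem \ref{thm: main iso y} (together with $\HHH(T(n,n))\simeq\HY(T(n,n))/(\yy)$), so you cannot appeal to Theorem \ref{thm: main iso}(b) to prove the present statement. With that input removed, your argument contains no independent proof that $\pi'\colon\DR_n^{\sgn}\to\overline{\HHH}^0(T(n,n+1))$ is either injective or surjective, and a dimension count alone gives nothing without one of those. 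The idea you are missing is the one the paper deploys in Theorem \ref{thm: final}: prove that $\pi'$ is \emph{injective} by combining (i) the fact that $\pi_y$ commutes with the tautological operators $\CF_k$ (because the $y$-ified Elias--Krasner maps $\pi_i$ sit inside skein triangles, Theorem \ref{thm: Fk summary}(d)), hence with $\Phi$ and the dual operators $\CE_k$ by Corollary \ref{cor: hard Lefshetz}; (ii) the identification of the distinguished class $\delta$ with $\Delta(\yy)$ (Lemma \ref{lem: delta}) together with the degree-bookkeeping argument showing $\pi_y(\delta)\neq0$; and (iii) Haiman's Operator Theorem (Theorem \ref{thm: operator conjecture}), dualized as Lemma \ref{lem: cogenerate}, which says every nonzero $f\in\DR_n^{\sgn}$ can be pushed to a nonzero multiple of $\Delta(\yy)$ by some $\psi(E_1,\ldots,E_{n-1})$. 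Then $\psi(\CE_1,\ldots,\CE_{n-1})\pi'(f)=c\,\pi'(\Delta(\yy))\neq0$ forces $\pi'(f)\neq0$. Only after that does Hogancamp's dimension count upgrade injectivity to an isomorphism, which in turn gives the surjectivity of $\pi_y$ and $\pi$ in Theorem \ref{thm: main iso}.

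For part (a), your proposed route through the isospectral Hilbert scheme is heavier than what is actually needed. The paper simply takes the identification $\HY^0(\FT_n)\simeq J$ (and $\HY(\FT_n)\simeq\cJ$) from \cite{GH} as input and proves, by an elementary symmetrization/antisymmetrization argument (Lemma \ref{lem: catalan} and Lemma \ref{lem: iota}), that $J/\mm J\simeq\DR_n^{\sgn}$ and $\overline{\cJ}/\mm\overline{\cJ}\simeq\DR_n^{\hook}$; no flatness or character formula for $X_n$ is required here. Haiman enters only twice: through the $q,t$-Catalan dimension formula underlying Hogancamp's Theorem \ref{thm: dimension}, and --- crucially for part (b) and for the tautological-class statements --- through the Operator Theorem, which your write-up does not invoke at all.
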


In an earlier paper \cite{GHM} the authors together with Hogancamp defined a commuting family of operators $\CF_k,\ k\ge 1$  on $y$-ified link homology $\HY(L)$ for any link $L$ \footnote{These were denoted by $\CF_{k+1}$ in \cite{GHM}.}.  The operator $\CF_1$ satisfies a ``hard Lefshetz property" and induces an action of the Lie algebra $\mathfrak{sl}_2$ in $\HY(L)$. This also implies the existence of an involution $\Phi$ on $\HY(L)$ categorifying the $Q\leftrightarrow Q^{-1}$ symmetry of the HOMFLY-PT polynomial, and one can formally define the ``dual" operators
$$
\CE_k=\Phi\CF_k\Phi.
$$
One can use representation theory of $\mathfrak{sl}_2$ to describe the properties of $\CE_1,\CF_1$ fairly explicitly (see \cite{CG} for some examples).
However, the higher operators $\CF_k,\CE_k, k
\ge 2$ remained mysterious.

\begin{theorem}
\label{thm: main tautol}
Under the isomorphism from Theorem \ref{thm: main iso} the operators $\CF_k,\CE_k$ on $\overline{\HHH}(T(n,n+1))$ correspond to the operators $F_k=\sum_{i=1}^{n}x_i^k\partial_{y_i}, E_k=\sum_{i=1}^{n}y_i^k\partial_{x_i}$ on $\DR_n^{\hook}$.
In particular, the operators $\CF_k,\CE_k$ are nonzero if and only if $k\le n-1$.
\end{theorem}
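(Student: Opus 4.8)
The plan is to pin down the operators $\CF_k$ and $\CE_k$ by working on the richer $y$-ified homology $\HY(T(n,n))$ and then pushing forward along the cobordism map $\pi_y$ of Theorem \ref{thm: main iso y}(b). First I would recall the explicit description of $\HY(T(n,n))$ and of the operators $\CF_k$ from \cite{GH, GHM}: on the $y$-ified homology of a link the operator $\CF_k$ is built from the Hochschild-degree-raising maps (insertion of a dot, twisted by the $\theta$-variables) and acts, after the identification of Theorem \ref{thm: main iso y}(a), on $\HY(T(n,n))$ as a $\C[\xx,\yy]$-linear-plus-derivation operator. The key point is that $\HY^0(T(n,n))$ is, as a $\C[\xx,\yy]$-module, a quotient of $\C[\xx,\yy]$ tensored with an exterior algebra in the $\theta_i$, and the geometric definition of $\CF_k$ makes it manifestly of the form $\sum_i x_i^k \partial_{\theta_i}$ up to the overall bookkeeping; passing to $\theta$-degree gradings and comparing with the parallel computation of $\CF_1$ already carried out via the $\mathfrak{sl}_2$-action in \cite{GHM, CG} fixes all constants. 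Then I would observe that under the isomorphism of Theorem \ref{thm: main iso}(a), reducing modulo $(\xx,\yy)$ and identifying $\theta_i \leftrightarrow$ the $y$-variables (or $x$-variables) of $\DR_n$, the operator $\sum_i x_i^k \partial_{\theta_i}$ becomes exactly $F_k = \sum_i x_i^k \partial_{y_i}$ on $\DR_n^{\hook}$.

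The second half is the $\CE_k$ side. Here I would invoke the involution $\Phi$ categorifying $Q \leftrightarrow Q^{-1}$, which on $\HY(T(n,n))$ (and hence on $\DR_n^{\hook}$) must exchange the two families of variables, i.e. corresponds to the $S_n$-equivariant swap $x_i \leftrightarrow y_i$ on $\C[\xx,\yy]$; this is forced because $\Phi$ intertwines $\CF_1$ with $\CE_1$, and by the first part $\CF_1 = \sum_i x_i\partial_{y_i}$ while the swap conjugates this to $\sum_i y_i\partial_{x_i}$. Granting that $\Phi$ is the variable-swap, $\CE_k = \Phi\CF_k\Phi$ is immediately $\sum_i y_i^k\partial_{x_i} = E_k$. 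Finally, the ``if and only if'' statement follows because on $\DR_n$ the variables $x_i$ (resp.\ $y_i$) satisfy the relations of the coinvariant algebra, so $x_i^k$ for $k \ge n$ is not a new operator --- more precisely, multiplication by the power sum $p_k = \sum x_i^k$ is zero in $\DR_n$ for $k\ge 1$, and a short argument with the Garsia--Haiman module structure (or the explicit hook-component description of Section \ref{sec: hooks}) shows $F_k = \sum_i x_i^k\partial_{y_i}$ acts as zero on all of $\DR_n^{\hook}$ precisely when $k \ge n$, while for $k \le n-1$ one exhibits a single element (e.g.\ a suitable Garsia--Haiman basis vector) on which it is nonzero.

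I expect the main obstacle to be the \emph{normalization and compatibility of the two cobordism maps}: Theorem \ref{thm: main iso} identifies $\overline{\HHH}^0(T(n,n+1))$ with $\HY^0(T(n,n))/(\xx,\yy)$ via $\pi_y$, but $\CF_k$ and $\CE_k$ are defined on $\HY$ of the target knot, and one must check that $\pi_y$ intertwines the operators on source and target, i.e. that $\CF_k^{T(n,n+1)} \circ \pi_y = \pi_y \circ \CF_k^{T(n,n)}$ up to the quotient by $(\xx,\yy)$. This requires knowing that the Elias--Krasner cobordism map commutes with the dot-insertion maps that define $\CF_k$, which should follow from functoriality of these constructions under braid-like cobordisms (the extra crossings in $T(n,n)\to T(n,n+1)$ are supported away from where $\CF_k$ acts), but making this precise --- and tracking that no sign or grading shift intervenes --- is the delicate technical heart of the argument. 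A secondary obstacle is verifying that $\Phi$ really is the naive variable-swap rather than the swap composed with some automorphism acting trivially on $\CF_1$; here one uses that the $\CF_k$ for $k \le n-1$ together generate enough of the symmetry to rigidify $\Phi$, or else appeals directly to the construction of $\Phi$ from the $\mathfrak{sl}_2$-action.
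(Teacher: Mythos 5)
The overall strategy you outline — compute the action of $\CF_k$ on $\HY$ of the full twist $T(n,n)$ (where the structure is explicit), then push forward along the $y$-ified Elias--Krasner map $\pi_y$, and use the involution $\Phi$ to handle $\CE_k$ — is exactly the paper's strategy. However, several of your concrete steps are wrong in ways that matter.

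\textbf{The formula for $\CF_k$.} You write that on $\HY(T(n,n))$ the operator $\CF_k$ is ``manifestly of the form $\sum_i x_i^k\partial_{\theta_i}$'' and is ``built from the Hochschild-degree-raising maps.'' This confuses $\CF_k$ with the Rasmussen differential $d_k$. The tautological class $\CF_k$ \emph{preserves} the $a$-grading (i.e.\ the Hochschild/$\theta$-degree), while $d_N = \sum_i \theta_i^* x_i^N$ lowers it. The correct input, from Theorem~\ref{thm: Fk summary}(c), is the commutation relation $[\CF_k, y_i] = x_i^k$, $[\CF_k,x_i]=0$, which forces $\CF_k = \sum_i x_i^k\partial_{y_i}$ on $\HY(\one_n) = \C[\xx,\yy]\otimes\wedge(\ttheta)$, acting trivially on the $\theta$'s. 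Your subsequent ``identification $\theta_i\leftrightarrow y_i$'' has no meaning in the paper's framework: both $y_i$ and $\theta_i$ are present simultaneously (the $y_i$'s come from $y$-ification, the $\theta_i$'s from Hochschild degree), and they play entirely different roles. Landing on the right final formula after two compensating errors does not constitute a proof.

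\textbf{Intertwining $\pi_y$ with $\CF_k$.} You identify this as ``the delicate technical heart of the argument'' and gesture at functoriality of the operators under cobordisms. In fact it is immediate from the already-established Theorem~\ref{thm: Fk summary}(d): the skein maps of \eqref{eq: skein maps} commute with $\CF_k$, and both the Gorsky--Hogancamp map $\Psi:\bFT_n\to\one_n^y$ and the cobordism map $\pi_y$ are built by composing such skein maps (see Lemma~\ref{lem: pi i} and Lemma~\ref{lem: pi}). Once commutation with $\CF_k$ is known, commutation with $\Phi$ and $\CE_k$ is automatic by Corollary~\ref{cor: hard Lefshetz}, since $\Phi$ and $\CE_1$ are reconstructed from $\CF_1$ via $\mathfrak{sl}_2$ representation theory.

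\textbf{Vanishing of $F_k$, $E_k$ for $k\ge n$.} Your argument that ``multiplication by $p_k=\sum x_i^k$ is zero in $\DR_n$'' is true but irrelevant: $F_k = \sum_i x_i^k\partial_{y_i}$ is a first-order differential operator, not multiplication by a power sum, and individual $\partial_{y_i}$ do not even preserve the defining ideal of $\DR_n$. The correct argument is via Haiman's operator theorem (Theorem~\ref{thm: operator conjecture}): since $\Delta(\xx)$ has degree $n-1$ in each variable, $F_k^*\Delta(\xx)=0$ for $k\ge n$, and since $\Delta(\xx)$ generates $\DH_n$ under $F_1^*,\dots,F_{n-1}^*,\partial_{x_i}$, the operator $F_k^*$ annihilates all of $\DH_n$ for $k\ge n$; dualizing gives $F_k=0$ on $\DR_n$. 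This is not a ``short argument with the Garsia--Haiman module structure'' — it is the deepest ingredient in the proof, and indeed the same theorem is what makes the co-generation argument (Lemma~\ref{lem: cogenerate}, Lemma~\ref{lem: hooks generate}) work, which in turn is needed to prove injectivity of $\pi'_y$ in Theorem~\ref{thm: final} and Theorem~\ref{thm: final full}.

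\textbf{The involution $\Phi$.} Your claim that $\Phi$ is the swap $x_i\leftrightarrow y_i$ is correct, and the reason you give (intertwining $\CF_1$ with $\CE_1$) is in the right direction. The paper simply takes this from Theorem~\ref{thm: Fk summary}(f), which states $\Phi x_i = y_i\Phi$; combined with $\Phi^2=\mathrm{Id}$ and equation \eqref{eq: def Phi}, this pins down $\Phi$ on $\HY(\one_n)$ without any rigidity argument.
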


In Theorem \ref{thm: final full} we also describe the action of differentials $d_N$ on $\overline{\HHH}(T(n,n+1))$. These appear in   Rasmussen spectral sequences \cite{Rasmussen} from $\overline{\HHH}$ to $\mathfrak{gl}(N)$ Khovanov-Rozansky homology, and an explicit description for their action in homology of torus knots $T(m,n)$ has been conjectured in \cite{GORS}. Theorem \ref{thm: final full} confirms this conjecture for $m=n+1$. It is known  that the operators $\CF_k$ commute with the differentials $d_N$ for all $k,N$.

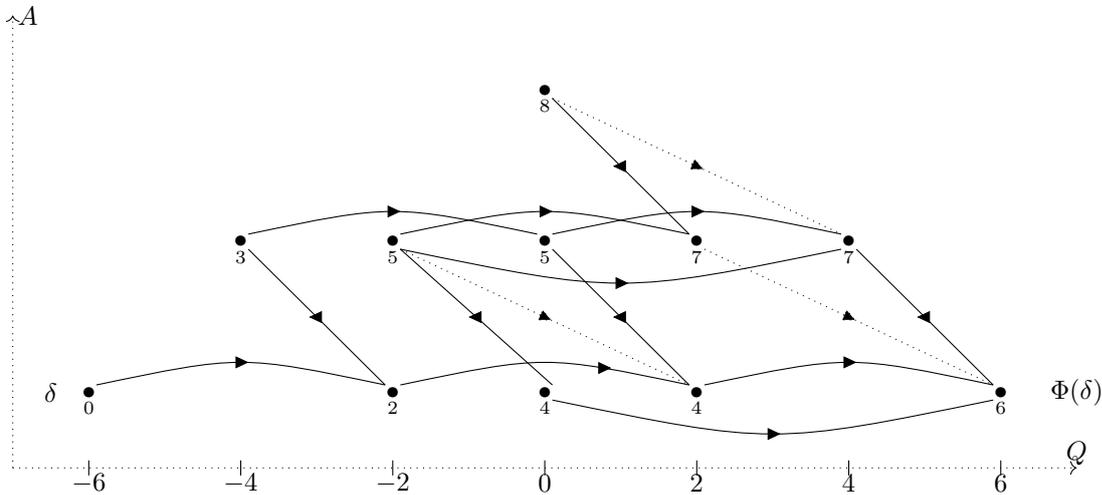
\begin{figure}[ht!]
\begin{tikzpicture}[scale=1]
\draw[dotted,->](-7,-1)--(7,-1);
\draw[dotted,->](-7,-1)--(-7,5);
\draw (7,-0.8) node {$Q$};
\draw (-6.8,5) node {$A$};
\draw (-6,-1.1)--(-6,-0.9);
\draw (-6,-1.2) node {$-6$};
\draw (-4,-1.1)--(-4,-0.9);
\draw (-4,-1.2) node {$-4$};
\draw (-2,-1.1)--(-2,-0.9);
\draw (-2,-1.2) node {$-2$};
\draw (0,-1.1)--(0,-0.9);
\draw (0,-1.2) node {$0$};
\draw (2,-1.1)--(2,-0.9);
\draw (2,-1.2) node {$2$};
\draw (4,-1.1)--(4,-0.9);
\draw (4,-1.2) node {$4$};
\draw (6,-1.1)--(6,-0.9);
\draw (6,-1.2) node {$6$};

\draw(-6,-0.2) node {\scriptsize $0$};
\draw(-2,-0.2) node {\scriptsize $2$};
\draw(0,-0.2) node {\scriptsize $4$};
\draw(2,-0.2) node {\scriptsize $4$};
\draw(6,-0.2) node {\scriptsize $6$};

\draw(-4,1.8) node {\scriptsize $3$};
\draw(-2,1.8) node {\scriptsize $5$};
\draw(0,1.8) node {\scriptsize $5$};
\draw(2,1.8) node {\scriptsize $7$};
\draw(4,1.8) node {\scriptsize $7$};

\draw(0,3.8) node {\scriptsize $8$};

\draw (-6.5,0) node {$\delta$};
\draw (-6,0) node {$\bullet$};
\draw (-2,0) node {$\bullet$};
\draw (0,0) node {$\bullet$};
\draw (2,0) node {$\bullet$};
\draw (6,0) node {$\bullet$};
\draw (7,0) node {$\Phi(\delta)$};
\draw (-4,2) node {$\bullet$};
\draw (-2,2) node {$\bullet$};
\draw (-0,2) node {$\bullet$};
\draw (2,2) node {$\bullet$};
\draw (4,2) node {$\bullet$};
\draw (0,4) node {$\bullet$};
\draw (-5.9,0.1)..controls (-4,0.5) and (-4,0.5)..(-2.1,0.1) node[currarrow,pos = 0.5]{};
\draw  (-1.9,0.1)..controls (0,0.5) and (0,0.5)..(1.9,0.1) node[currarrow,pos = 0.75]{};
\draw (2.1,0.1)..controls (4,0.5) and (4,0.5)..(5.9,0.1) node[currarrow,pos = 0.5]{};
\draw  (-3.9,2.1)..controls (-2,2.5) and (-2,2.5)..(-0.1,2.1) node[currarrow,pos = 0.5]{};
\draw  (0.1,2.1)..controls (2,2.5) and (2,2.5)..(3.9,2.1) node[currarrow,pos = 0.5]{};
\draw  (-1.9,2.1)..controls (0,2.5) and (0,2.5)..(1.9,2.1) node[currarrow,pos = 0.5]{};

\draw  (0.1,-0.1)..controls (3,-0.7) and (3,-0.7)..(5.9,-0.1) node[currarrow,pos = 0.5]{};

\draw  (-1.9,1.9)..controls (1,1.3) and (1,1.3)..(3.9,1.9) node[currarrow,pos = 0.5]{};

\draw  (-2.1,0.1)--(-3.9,1.9) node[currarrow,pos = 0.5,xscale=-1]{};
\draw (-1.9,1.9)--(0.1,0.1) node[currarrow,pos = 0.5,xscale=-1]{};
\draw (0.1,1.9)--(1.9,0.1)node[currarrow,pos = 0.5,xscale=-1]{};
\draw (4.1,1.9)--(5.9,0.1)node[currarrow,pos = 0.5,xscale=-1]{};
\draw (0.1,3.9)--(1.9,2.1)node[currarrow,pos = 0.5,xscale=-1]{};
\draw [dotted] (0.1,3.9)--(3.9,2.1)node[currarrow,pos = 0.5,sloped]{};
\draw [dotted] (2.1,1.9)--(5.9,0.1)node[currarrow,pos = 0.5,sloped]{};
\draw [dotted] (-1.9,1.9)--(1.9,0.1)node[currarrow,pos = 0.5,sloped]{};
\end{tikzpicture}
\caption{Operators $\CF_k$ and differentials for $T(3,4)$: $\CF_1$ and $\CF_2$  correspond to shorter and longer horizontal arrows; $d_1$ and $d_2$ correspond to solid and dotted diagonal arrows.}
\label{Fig: T34}
\end{figure}

We illustrate the action of $\CF_k$ and $d_N$ for $n=3$ in Figure \ref{Fig: T34} (see also Figure \ref{Fig: catalan} below for a more detailed picture of $\overline{\HHH}^0$). The gradings in Figure \ref{Fig: T34} match the conventions of \cite{DGR}, with the axes corresponding to the $Q$ and $A$ gradings. The $T$-gradings are indicated next to the generators. The operators $\CF_1$ and $\CF_2$ preserve the $A$-grading and correspond to shorter and longer horizontal arrows. The differentials $d_1$ and $d_2$ decrease the $A$-grading and correspond to solid and dotted diagonal arrows.

As an application of deep results of Haiman \cite{HaimanInv}, we prove that the operators $\CF_k$ and the differentials $d_N$ ``co-generate" $\overline{\HHH}(T(n,n+1))$ in the following sense. Recall that for any positive braid $\beta$ the homology $\HHH(\beta)$ has a distinguished generator $\delta_{\beta}$ corresponding to the rightmost copy of $R$ in the Rouquier complex (see Section \ref{sec: background} for all details). The generator $\delta=\delta_{\beta}$ for $T(3,4)$ is shown on the left of Figure \ref{Fig: T34}. Under the isomorphism from Theorem \ref{thm: main iso}, $\delta$ corresponds to the Vandermonde determinant $\Delta(\yy)$ while $\Phi(\delta)$ corresponds to $\Delta(\xx)$.

\begin{corollary}
\label{cor: main cogenerate}
Let $\delta$ be the rightmost generator for $T(n,n+1)$, and $f$ an arbitrary nonzero homogeneous generator in $\overline{\HHH}(T(n,n+1))$. Then there exists an element $\varphi\in \C[\CF_1,\ldots,\CF_{n-1}]\otimes \wedge^{\bullet}\langle d_1,\ldots,d_{n-1}\rangle$ such that $\varphi(f)=c\Phi(\delta)$ for some $c\neq 0$. 
\end{corollary}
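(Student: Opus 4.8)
The plan is to deduce Corollary~\ref{cor: main cogenerate} from its dual statement: that $\Phi(\delta)$ is a \emph{cyclic vector} for the algebra generated by the adjoints of the operators $\CF_k$ and $d_N$. Identify $\overline{\HHH}(T(n,n+1))$ with $\DR_n^{\hook}$ via Theorem~\ref{thm: main iso} and equip it with the perfect pairing $\langle\cdot,\cdot\rangle$ obtained by restricting the natural $S_n$-invariant apolar pairing on $\DR_n$ to the hook isotypic components (this pairing is perfect on each isotypic component, multiplication by $x_i$ is adjoint to $\partial_{x_i}$ and $\partial_{y_i}$ is adjoint to multiplication by $y_i$, and up to regrading it agrees with a Poincaré-type self-duality of HOMFLY-PT homology). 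By Theorem~\ref{thm: main tautol} the operator $\CF_k$ acts as $\sum_i x_i^k\partial_{y_i}$, and a short computation identifies its adjoint with the $k$-th Haiman polarization operator $\rho_k:=\sum_i y_i\partial_{x_i}^k$, which annihilates $\DR_n^{\hook}$ once $k\ge n$, just as $\CF_k$ does. Let $d_N^{\dagger}$ be the adjoint of $d_N$; it raises the Hochschild degree and is described explicitly by Theorem~\ref{thm: final full}. Since the $\CF_k$ commute with the $d_N$ and the $d_N$ anticommute and square to zero, the same holds for the adjoints, so the algebra $B^{\dagger}:=\C[\rho_1,\dots,\rho_{n-1}]\otimes\wedge^{\bullet}\langle d_1^{\dagger},\dots,d_{n-1}^{\dagger}\rangle$ acts on $\DR_n^{\hook}$, and the dual statement to be proved is
\[
B^{\dagger}\cdot\Phi(\delta)=\DR_n^{\hook}.
\]

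Granting this, the Corollary follows formally. Let $f$ be nonzero and homogeneous. Perfectness of the pairing gives $h$ with $\langle f,h\rangle\neq 0$; expanding $h=\sum_\beta c_\beta\,b_\beta\,\Phi(\delta)$ with the $b_\beta$ monomials in the $\rho_k$ and $d_N^{\dagger}$ yields $0\neq\langle f,h\rangle=\sum_\beta c_\beta\,\langle b_\beta^{\dagger}f,\Phi(\delta)\rangle$, hence $\langle b_{\beta_0}^{\dagger}f,\Phi(\delta)\rangle\neq 0$ for some $\beta_0$, where $b_{\beta_0}^{\dagger}$ is, up to sign and reordering, a monomial in $\C[\CF_1,\dots,\CF_{n-1}]\otimes\wedge^{\bullet}\langle d_1,\dots,d_{n-1}\rangle$. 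Now $\Phi(\delta)$ corresponds to the Vandermonde determinant $\Delta(\xx)$, which spans the one-dimensional bidegree-$(\binom n2,0)$ component of $\DR_n^{\hook}$, and the apolar pairing pairs homogeneous elements only with those of the same bidegree; therefore the homogeneous element $b_{\beta_0}^{\dagger}f$ lies in that one-dimensional space, so $b_{\beta_0}^{\dagger}f=c\,\Phi(\delta)$ with $c\neq 0$, and $\varphi:=b_{\beta_0}^{\dagger}$ is the required element.

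It remains to prove $B^{\dagger}\cdot\Phi(\delta)=\DR_n^{\hook}$, which is where the deep results of Haiman~\cite{HaimanInv} enter. On the sign component this is essentially classical: Haiman's theorem that the space of diagonal harmonics is generated from $\Delta(\xx)$ by the polarization operators together with the lowering operators $\partial_{x_i}$ specializes, on the $\sgn$-isotypic part, to $\DR_n^{\sgn}=\C[\rho_1,\dots,\rho_{n-1}]\,\Delta(\xx)$ — here no $\partial_{x_i}$ are needed because the $\sgn$-part of the ordinary coinvariant algebra is already spanned by $\Delta(\xx)$ alone. To reach the remaining hook components one applies the codifferentials $d_N^{\dagger}$, which shift the Hochschild degree and so move between hook isotypic components; the claim is that $\C[\rho_1,\dots,\rho_{n-1}]\,\langle d_1^{\dagger},\dots,d_{n-1}^{\dagger}\rangle\,\Delta(\xx)$ fills out each of them. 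I would verify this by a bigraded dimension count of the $B^{\dagger}$-submodule generated by $\Delta(\xx)$ against Haiman's formula $\nabla e_n$ for the bigraded Frobenius character of $\DR_n$ (equivalently, via the geometry of the isospectral Hilbert scheme), after making $d_N^{\dagger}$ explicit on the presentation of $\DR_n^{\hook}$ recalled in Section~\ref{sec: hooks}. The main obstacle is exactly this: controlling the interaction of the codifferentials $d_N^{\dagger}$ with the polarization operators $\rho_k$ well enough to see that together they sweep out all of $\DR_n^{\hook}$ and not merely a proper submodule; the auxiliary facts used along the way — that $\CF_k^{\dagger}=\rho_k$ and that the pairing is the expected one — are short computations by comparison.
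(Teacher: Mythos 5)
Your formal reduction of the corollary to the cyclicity statement --- that $\Phi(\delta)=\Delta(\xx)$ generates the hook space under the algebra of adjoints $\C[F_1^*,\ldots,F_{n-1}^*]\otimes\wedge^{\bullet}\langle d_1^*,\ldots,d_{n-1}^*\rangle$ --- is correct and is essentially the argument the paper uses (the adjunction step in Lemma~\ref{lem: cogenerate}, together with the one-dimensionality of the bidegree of $\Delta(\xx)$, which you invoke via the pairing to force $b_{\beta_0}^{\dagger}f=c\,\Phi(\delta)$). The problem is that the cyclicity statement itself, which is exactly Lemma~\ref{lem: hooks generate}(a) of the paper, is left unproved: you yourself flag "controlling the interaction of the codifferentials $d_N^{\dagger}$ with the polarization operators" as the main obstacle, and the suggested remedy --- a bigraded dimension count against $\nabla e_n$ --- is circular, since computing the Hilbert series of the $B^{\dagger}$-submodule generated by $\Delta(\xx)$ requires understanding precisely the relations you are trying to verify.

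The missing idea is Solomon's theorem on invariant differential forms (Lemma~\ref{lem: solomon}). Starting from Theorem~\ref{thm: operator conjecture}, $\Delta(\xx)$ generates $\wedge^{\bullet}V\otimes\DH_n$ under $\wedge^{\bullet}V\otimes\C[F_1^*,\ldots,F_{n-1}^*,\partial_{x_1},\ldots,\partial_{x_n}]$; passing to $\sgn$-isotypic parts, $\Delta(\xx)$ generates $\DH_n^{\hook}$ under the $S_n$-invariants of this operator algebra. Solomon's theorem identifies
\[
\bigl[\wedge^{\bullet}V\otimes\C[\partial_{x_1},\ldots,\partial_{x_n}]\bigr]^{S_n}
=\C[\partial_{x_1},\ldots,\partial_{x_n}]^{S_n}\otimes\wedge^{\bullet}\langle d_1^*,\ldots,d_{n-1}^*\rangle,
\]
and the positive-degree part of $\C[\partial_{x_1},\ldots,\partial_{x_n}]^{S_n}$ kills $\Delta(\xx)$, which is what lets one drop the $\partial_{x_i}$ and conclude that $\C[F_1^*,\ldots,F_{n-1}^*]\otimes\wedge^{\bullet}\langle d_1^*,\ldots,d_{n-1}^*\rangle$ already suffices. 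Without this structural input your $B^{\dagger}$-submodule might a priori be proper, and nothing in the proposal rules that out.
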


Next, we use Theorem \ref{thm: main tautol} to study the commutation relations between $\CE_k$ and $\CF_k$. Given a polynomial $H\in \C[x,y]$, the corresponding {\em Hamiltonian vector field} $v_H$ is the first order differential operator on $\C[x,y]$ defined as 
$$
v_H=\frac{\partial H}{\partial x}\partial_y-\frac{\partial H}{\partial y}\partial_x.
$$
For example,
$$
v_{x^k}=kx^{k-1}\partial_y,\ v_{y^k}=-ky^{k-1}\partial_x.
$$
The Hamiltonian vector fields generate a Lie algebra $\mathcal{H}_2$, and we can consider a subalgebra $\mathcal{H}_2^{\ge 2}$ spanned by the vector fields $v_{x^ay^b}$ with $a+b\ge 2$. Theorem \ref{thm: main tautol} implies the following. 

\begin{corollary}
\label{cor: main Hamiltonian}
The action of $\CE_k,\CF_k$ on $\overline{\HHH}(T(n,n+1))$ extends to the action of the Lie algebra $\mathcal{H}_2^{\ge 2}$. 
\end{corollary}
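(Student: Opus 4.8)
The plan is to produce the $\mathcal{H}_2^{\ge2}$-action directly on diagonal coinvariants and then identify its generators with $\CF_k,\CE_k$ via Theorems \ref{thm: main iso} and \ref{thm: main tautol}. First I would introduce, for $H\in\C[x,y]$, the diagonal vector field
\[
v_H^{(n)}=\sum_{i=1}^{n}\left(\frac{\partial H}{\partial x}(x_i,y_i)\,\partial_{y_i}-\frac{\partial H}{\partial y}(x_i,y_i)\,\partial_{x_i}\right)
\]
on $\C[\xx,\yy]$. Writing $v_H^{(n)}=\sum_i (v_H)_i$, where $(v_H)_i$ involves only the $i$-th pair of variables, derivations in distinct pairs commute while $[(v_H)_i,(v_G)_i]=(v_{\{H,G\}})_i$, so $H\mapsto v_H^{(n)}$ is a Lie algebra homomorphism; restricting it to polynomials all of whose terms have degree $\ge2$ yields a homomorphism $\rho\colon\mathcal{H}_2^{\ge2}\to\mathrm{Der}(\C[\xx,\yy])$ with $\rho(v_{x^{k+1}})=(k+1)\sum_i x_i^k\partial_{y_i}=(k+1)F_k$ and $\rho(v_{y^{k+1}})=-(k+1)\sum_i y_i^k\partial_{x_i}=-(k+1)E_k$.

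Next I would show that $\rho(v)$ descends to $\DR_n$ and preserves $\DR_n^{\hook}$ for every $v\in\mathcal{H}_2^{\ge2}$. Each $v_H^{(n)}$ is equivariant for the diagonal $S_n$-action, hence preserves $\C[\xx,\yy]^{S_n}$ and every isotypic component; and when all terms of $H$ have degree $\ge2$, $v_H^{(n)}$ does not lower the total degree in $x_1,\dots,x_n,y_1,\dots,y_n$. Thus $v_H^{(n)}$ sends each positive-degree symmetric polynomial either to $0$ or to a positive-degree symmetric polynomial, i.e. into the ideal $\C[\xx,\yy]^{S_n}_+$; since these polynomials generate that ideal, the Leibniz rule gives $v_H^{(n)}\bigl(\C[\xx,\yy]^{S_n}_+\bigr)\subseteq\C[\xx,\yy]^{S_n}_+$, so $v_H^{(n)}$ induces a derivation of $\DR_n$ that, by equivariance, preserves $\DR_n^{\hook}$ (and $\DR_n^{\sgn}$). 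This produces a Lie algebra action of $\mathcal{H}_2^{\ge2}$ on $\DR_n^{\hook}$, necessarily factoring through a finite-dimensional quotient, consistent with $\CF_k=\CE_k=0$ for $k\ge n$. This step is also exactly where $a+b\ge2$ is essential: the degree-one fields $v_x=\partial_y$ and $v_y=-\partial_x$ send $\sum_i y_i$, resp. $\sum_i x_i$, to nonzero constants, so they do not descend and $\mathcal{H}_2$ itself does not act on $\DR_n$.

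Finally I would transport this action across the isomorphism $\DR_n^{\hook}\simeq\overline{\HHH}(T(n,n+1))$ of Theorem \ref{thm: main iso}(c). By Theorem \ref{thm: main tautol} the operators $\CF_k$ and $\CE_k$ correspond to $F_k=\tfrac{1}{k+1}\rho(v_{x^{k+1}})$ and $E_k=-\tfrac{1}{k+1}\rho(v_{y^{k+1}})$, so every $\CF_k,\CE_k$ lies in the image of the $\mathcal{H}_2^{\ge2}$-action, which is precisely the claimed extension. I do not expect a genuine obstacle here: the whole argument reduces to the short verification that the diagonal Hamiltonian vector fields of degree $\ge2$ preserve the diagonal-coinvariant ideal, which follows from $S_n$-equivariance together with the degree bound, once Theorem \ref{thm: main tautol} tells us that $\CF_k,\CE_k$ are exactly the diagonal lifts of $v_{x^{k+1}},v_{y^{k+1}}$. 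The only points demanding care are the normalizing factors $k+1$ and the degree-$\ge2$ hypothesis, which cannot be removed.
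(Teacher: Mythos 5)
Your proposal is correct and follows essentially the same route as the paper: it identifies $\CF_k,\CE_k$ with $F_k,E_k$ on $\DR_n^{\hook}$ via Theorems~\ref{thm: main iso} and~\ref{thm: main tautol}, and then uses the diagonal homomorphism $\mathcal{H}_2\to\mathrm{Der}(\C[\xx,\yy])$ together with the observation $F_k=\frac{1}{k+1}v_{k+1,0}$, $E_k=-\frac{1}{k+1}v_{0,k+1}$ from the paper's Section on Hamiltonian vector fields. The paper treats the corollary as an immediate consequence of Theorem~\ref{thm: main tautol}; you additionally spell out the point the paper leaves implicit, namely that every $v_{a,b}$ with $a+b\ge2$ is $S_n$-equivariant and non-degree-lowering and therefore preserves $\C[\xx,\yy]^{S_n}_+$ and the hook isotypic component, so the whole Lie algebra (not merely the generators $E_k,F_k$) descends to $\DR_n^{\hook}$. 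This is a genuinely useful elaboration rather than a different argument, and your remark on why $a+b\ge2$ cannot be dropped matches the paper's own remark after Lemma~\ref{lem: reduced}.
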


In fact, inspired by recent work of the second author \cite{HMMS,ICM}, we conjecture that this is a general phenomenon which works for all links.

\begin{conjecture}
\label{conj: Hamiltonian}
Given an arbitrary link $L$, there is an action of the Lie algebra $\mathcal{H}_2$ on $\HY(L)$ extending the action of $\CE_k,\CF_k$. If $L$ is a knot then there is an action of $\mathcal{H}_2^{\ge 2}$ on $\overline{\HHH}(L)$. 
\end{conjecture}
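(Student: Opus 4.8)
\emph{Approach to Conjecture~\ref{conj: Hamiltonian}.}
The plan is to reduce the conjecture to a finite list of Lie-algebra identities among the operators $\CF_k,\CE_k$ and then verify those identities as uniformly in $L$ as possible. First one records what the structure of $\mathcal{H}_2$ contributes. Assigning weight $a+b-2$ to $v_{x^ay^b}$ turns $\mathcal{H}_2$ into a graded Lie algebra $\bigoplus_{m\ge -1}\mathfrak{g}_m$ with $\mathfrak{g}_{-1}=\langle v_x,v_y\rangle$, $\mathfrak{g}_0\cong\mathfrak{sl}_2$, $\dim\mathfrak{g}_m=m+3$, and $\mathcal{H}_2^{\ge 2}=\bigoplus_{m\ge 0}\mathfrak{g}_m$; here $\CF_k,\CE_k$ are (up to scalars) $v_{x^{k+1}},v_{y^{k+1}}\in\mathfrak{g}_{k-1}$. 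Elementary computations show that $\mathcal{H}_2^{\ge 2}$ is generated as a Lie algebra by $\mathfrak{g}_0\oplus\mathfrak{g}_1$, hence by $\{\CF_1,\CE_1,\CF_2\}$; that $\mathcal{H}_2$ is generated by these together with $v_x$; and that both are finitely presented in these generators. So the existence of the asserted action on a fixed module is equivalent to the vanishing, in the corresponding subalgebra of $\mathrm{End}(\HY(L))$ (resp.\ $\mathrm{End}(\overline{\HHH}(L))$), of a finite set of elements $r_1,\dots,r_N$ of the free Lie algebra on the generating symbols, namely the defining relations rewritten in those generators.

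Several of the $r_i$ are already known to hold. By \cite{GHM} the $\CF_k$ pairwise commute, hence so do the $\CE_k=\Phi\CF_k\Phi$, which disposes of $[\CF_j,\CF_k]=0$ and $[\CE_j,\CE_k]=0$; and the hard-Lefschetz $\mathfrak{sl}_2$ generated by $\CF_1,\CE_1$ (also \cite{GHM}) realizes $\mathfrak{g}_0$, so in particular $[\CF_1,\CE_1]$ is a nonzero multiple of the would-be image of $v_{xy}$. What remains is the ``mixed'' content: the iterated brackets $\CG_{a,b}$ built from $\CF_1,\CE_1,\CF_2$ that represent $v_{x^ay^b}$ in $\mathcal{H}_2$ must be well defined (independent of the bracketing) and, after the normalizations are fixed, must satisfy $[\CG_{a,b},\CG_{c,d}]=(ad-bc)\,\CG_{a+c-1,\,b+d-1}$ — in particular one needs identities such as $\CF_{k+1}$ being a specific nonzero multiple of $[\CF_2,[\CE_1,\CF_k]]$. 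For the link case one must additionally produce operators $\CF_0,\CE_0$ on $\HY(L)$ realizing $\mathfrak{g}_{-1}$; the natural candidates are the diagonal $y$- and $x$-derivations carried by the $y$-ification, which on a knot act as $\partial_y,\partial_x$ on the polynomial factor in \eqref{eq: knots intro} and hence do \emph{not} descend to $\overline{\HHH}(L)$, consistently with the weaker statement claimed there.

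I would attack the remaining identities along two routes in parallel. The first is a direct computation from the construction of the $\CF_k$ in \cite{GHM}: compute $[\CE_1,\CF_k]$ and its further brackets with $\CF_1,\CE_1,\CF_2$ at the level of the $y$-ified (curved) Rouquier complex and match them with the putative $\CG_{a,b}$, the essential difficulty being to make the computation uniform in the number of strands. The second, and in my view more promising, route is functorial: the $\CF_k$ are essentially natural transformations of $y$-ified homology, so any Lie-polynomial identity among them holds for all $L$ once it holds on a family of objects generating the relevant monoidal homotopy category; following the circle of ideas in \cite{HMMS,ICM}, where a related symmetry is produced categorically, one would try to realize the whole $\mathcal{H}_2$-action through a module structure over a ``universal'' algebra of Hilbert-scheme or Cherednik type, reducing each $r_i$ to a single model computation, say on a full twist or on small torus braids. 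The already-established case $L=T(n,n+1)$ (Corollary~\ref{cor: main Hamiltonian}), together with the explicit formulas $F_k=\sum_i x_i^k\partial_{y_i}$, $E_k=\sum_i y_i^k\partial_{x_i}$ of Theorem~\ref{thm: main tautol}, pins down the normalization of the $\CG_{a,b}$ and is a stringent consistency check; extending the result first to other positive-braid closures admitting diagonal-coinvariant-type models would be a natural intermediate goal.

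The knot statement should then follow from the link statement. Using \eqref{eq: knots intro}, every element of $\mathcal{H}_2^{\ge 2}$ acts by an operator that does not lower total polynomial degree — this is precisely the role of the hypothesis $a+b\ge 2$ — so $\mathcal{H}_2^{\ge 2}$ preserves the relevant submodule of $\HY(K)$ and descends to $\overline{\HHH}(K)$, in complete parallel with the model fact $\DR_n^{\hook}\cong\HY(T(n,n))/(\xx,\yy)\HY(T(n,n))$ of Theorem~\ref{thm: main iso y}(a), where $\mathcal{H}_2^{\ge 2}$ preserves the ideal $(\xx,\yy)$ whereas $\mathfrak{g}_{-1}\subset\mathcal{H}_2$ does not; this reduction is routine once the link case is in hand. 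The principal obstacle throughout is the mixed identities in the link case: for $k\ge 2$ the $\CF_k$ remain, in the words of the introduction, mysterious, and there is as yet no closed description of $[\CE_1,\CF_k]$ comparable to the hard-Lefschetz description of $[\CF_1,\CE_1]$; the step requiring genuinely new input is the uniform-in-$n$ evaluation of these brackets, equivalently the identification of the universal algebra through which the $\mathcal{H}_2$-action is meant to factor.
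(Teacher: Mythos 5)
The statement you are addressing is a \emph{conjecture}, not a theorem: the paper does not prove it and does not claim to. The only argument the paper supplies is the one-line Remark immediately after, reducing the knot case to the link case via \eqref{eq: knots intro} and Lemma \ref{lem: reduced}. Your proposal is therefore not a proof to be compared against the paper's proof, but an attack plan, and you are explicit about this yourself — you write that ``the step requiring genuinely new input is the uniform-in-$n$ evaluation of these brackets.'' That is an accurate assessment; the gap you name is the gap.

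Within that framing your outline is sensible and largely consonant with what the paper does establish. The pieces you list as known — $[\CF_j,\CF_k]=0$, the $\mathfrak{sl}_2$ generated by $\CF_1,\CE_1$, the fact that $\CF_k,\CE_k$ are images of $v_{x^{k+1}},v_{y^{k+1}}$, and the generation of $\mathcal{H}_2^{\ge 2}$ by $v_{a,0},v_{0,b}$ with $a,b\ge 2$ — are all in the paper (Theorem \ref{thm: Fk summary} and the Hamiltonian-vector-fields subsection). Your identification of the mixed identities $[\CG_{a,b},\CG_{c,d}]=(ad-bc)\CG_{a+c-1,b+d-1}$, and in particular the need for a closed form for $[\CE_1,\CF_k]$, as the missing content matches the paper's own framing of the higher $\CE_k,\CF_k$ as ``mysterious.'' Two small cautions. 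First, on the reduction from links to knots: the paper invokes Lemma \ref{lem: reduced}, whose hypothesis is that the $\mathcal{H}_2$-action is \emph{compatible} with a $\C[x,y]$-module structure in the Leibniz sense of the preceding Definition; your phrasing ``does not lower total polynomial degree'' is a heuristic and not the operative condition — what the lemma actually uses is that $\mathcal{H}_2^{\ge 2}$ preserves $(x,y)\overline{M}$ once $M\cong\overline{M}[x,y]$, and verifying the compatibility hypothesis for the conjectural action is itself a nontrivial point that one should not wave through. Second, your claim that $\{\CF_1,\CE_1,\CF_2\}$ generate $\mathcal{H}_2^{\ge 2}$ is a sharpening of the paper's statement (which uses all $v_{a,0},v_{0,b}$ with $a,b\ge 2$); it is believable by $\mathfrak{sl}_2$-weight bookkeeping but you should prove it if you intend to lean on it, since the finite-presentation claim is what makes the whole reduction to a finite list of identities work.

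In short: there is no paper proof to compare to; your reduction of the knot case to the link case reproduces the paper's Remark (modulo the compatibility subtlety above); and the link case — the entire substance of the conjecture — remains open in your proposal exactly as it does in the paper.
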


\begin{remark}
The second statement follows from the first by \eqref{eq: knots intro} and Lemma \ref{lem: reduced}. 
\end{remark}

The paper is organized as follows. In Section \ref{sec: background} we recall some background on link homology, $y$-ification and tautological classes following \cite{GH,GHM}. We also recall some facts about the space $\DR_n$ of diagonal harmonics and its sign component $\DR_n^{\sgn}$, in particular the ``Operator Conjecture" about $F_k$ proved by Haiman in \cite{HaimanInv} (see Theorem \ref{thm: operator conjecture}). Finally, we briefly discuss the Lie algebra of Hamiltonian vector fields. We do not assume the reader's familiarity with diagonal harmonics or Hamiltonian vector fields, and provide proofs of many known results to clarify the exposition.

In Section \ref{sec:links} we first recall the description of $\HY(T(n,n))$ from \cite{GH} and use it to compute the action of tautological classes for $T(n,n)$. Then we construct an explicit map $\pi_y$ relating $\HY(T(n,n))$ to $\HY(T(n,n+1))$ and establish its properties. In particular, we prove that it commutes with the action of tautological classes as in Theorem \ref{thm: main tautol}. Finally, in Theorem \ref{thm: final} we use all of the above results to prove that it factors through $\DR_n^{\sgn}$ and  yields a desired isomorphism $\pi'$. This proves Theorems \ref{thm: main iso y} and \ref{thm: main tautol} for $a=0$. In Section \ref{sec: hooks} we generalize these arguments to higher $a$-degrees in Theorem \ref{thm: final full}.

Theorem \ref{thm: main iso} is a consequence of Theorem \ref{thm: main iso y} and the isomorphism 
$$
\HHH(T(n,n))=\HY(T(n,n))/(\yy)\HY(T(n,n))
$$
proved in \cite{GH} (see Theorem \ref{thm: GH}).  Finally, Corollary \ref{cor: main Hamiltonian} follows from the description of $E_k,F_k$ on $\DR_n$ and 
Corollary \ref{cor: main cogenerate} follows from the above and Lemma \ref{lem: cogenerate} (for $a=0$) and Lemma \ref{lem: hooks generate} (in general).

\section*{Acknowledgments}

We thank Matt Hogancamp and Lev Rozansky for useful discussions related to Conjecture \ref{conj: Hamiltonian}.

The work of E. G. was partially supported by the NSF grant DMS-2302305. The research of A. M. is supported by the Consolidator Grant No. 101001159 ``Refined invariants in combinatorics, low-dimensional topology and geometry of moduli spaces” of the
European Research Council.

\section{Background}
\label{sec: background}

\subsection{Link homology}

We recall some definitions on link homology and refer to \cite{GH,GHM} for more details and grading conventions. Let $R=\C[x_1,\ldots,x_n]$. Consider the $R-R$ bimodules
$$
B_i=\frac{\C[x_1,\ldots,x_n,x'_1,\ldots,x'_n]}{x_i+x_{i+1}=x'_i+x'_{i+1},x_ix_{i+1}=x'_ix'_{i+1},x_j=x'_j\ (j\neq i,i+1)}.
$$
The category of Soergel bimodules $\SBim_n$ is defined as the smallest subcategory of $R-R$-bimodules containing $R$ and $B_i$ and closed under direct sums, direct summands, grading shifts and tensor products. 

There exist bimodule maps $b_i:B_i\to R$ and $b_i^*:R\to B_i$ such that 
$$
b_ib^*_i=b^*_ib_{i}=x_i-x'_{i+1}.
$$
The {\em Rouquier complexes} are defined as the cones of these maps:
$$
T_i=\left[B_i\xrightarrow{b_i} R\right],\ T_i^{-1}=\left[R\xrightarrow{b_i^*} B_i\right].
$$
\begin{theorem}(\cite{Rouquier})
The complexes $T_i,T_i^{-1}$ satisfy braid relations up to homotopy. 
\end{theorem}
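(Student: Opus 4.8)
The plan is to verify the three families of relations defining the braid group separately: invertibility $T_i\otimes_R T_i^{-1}\simeq R\simeq T_i^{-1}\otimes_R T_i$, far-commutativity $T_i\otimes_R T_j\simeq T_j\otimes_R T_i$ for $|i-j|\ge 2$, and the braid relation $T_i\otimes_R T_{i+1}\otimes_R T_i\simeq T_{i+1}\otimes_R T_i\otimes_R T_{i+1}$. Far-commutativity is immediate and requires no homotopy: when $|i-j|\ge 2$ the bimodules $B_i$ and $B_j$ are built from disjoint triples of variables, so there is a canonical isomorphism $B_i\otimes_R B_j\cong B_j\otimes_R B_i$ of $R$-$R$-bimodules intertwining the maps $b_i,b_i^*,b_j,b_j^*$, and hence an isomorphism of complexes $T_i\otimes_R T_j\cong T_j\otimes_R T_i$.

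For invertibility I would compute $T_i\otimes_R T_i^{-1}$ directly. Writing out the total complex of the tensor product of the two two-term complexes, one obtains a three-term complex concentrated in cohomological degrees $-1,0,1$, with outer terms $B_i$ and middle term $(B_i\otimes_R B_i)\oplus R$, the differentials being assembled from $b_i\otimes\mathrm{id}$, $\mathrm{id}\otimes b_i^*$ and their counterparts. The structural input is the decomposition $B_i\otimes_R B_i\cong B_i(1)\oplus B_i(-1)$ of Soergel bimodules, combined with the relation $b_ib_i^*=b_i^*b_i=x_i-x'_{i+1}$: this relation forces the components of the differential landing on, and emanating from, the two shifted copies of $B_i$ inside the middle term to be isomorphisms onto the outer $B_i$'s. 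Two applications of Gaussian elimination then cancel these contractible pairs and leave exactly $R$ in cohomological degree $0$, proving $T_i\otimes_R T_i^{-1}\simeq R$; the case of $T_i^{-1}\otimes_R T_i$ is symmetric.

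The braid relation is the main obstacle. Here I would pass to minimal complexes, using that $\SBim_n$ is Krull--Schmidt so that every bounded complex of Soergel bimodules admits a minimal representative (one with no contractible direct summands), unique up to isomorphism of complexes, and that two complexes are homotopy equivalent precisely when their minimal representatives are isomorphic. It therefore suffices to compute the minimal complexes of $T_i\otimes_R T_{i+1}\otimes_R T_i$ and $T_{i+1}\otimes_R T_i\otimes_R T_{i+1}$ and to exhibit an isomorphism between them. Each tensor product is a priori built from the bimodules $R$, $B_i$, $B_{i+1}$, $B_i\otimes_R B_{i+1}$, $B_{i+1}\otimes_R B_i$ and $B_i\otimes_R B_{i+1}\otimes_R B_i$; invoking the rank-two parabolic decomposition $B_i\otimes_R B_{i+1}\otimes_R B_i\cong B_w\oplus B_i$, where $B_w$ is the indecomposable bimodule attached to the longest element $w=s_is_{i+1}s_i=s_{i+1}s_is_{i+1}$, and iteratively eliminating contractible summands, one reduces each side to a minimal complex whose extreme terms are $B_w$ in the most negative homological degree and $R$ in degree $0$. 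The hard part is the bookkeeping: at each elimination step one must check that the relevant component of the differential is an isomorphism, track all internal grading shifts, and finally match the two resulting minimal complexes together with their differentials. The symmetry $s_is_{i+1}s_i=s_{i+1}s_is_{i+1}$, which makes $B_w$ and its appearances intrinsic to the rank-two parabolic rather than to a choice of reduced word, is what ultimately forces the two minimal complexes to coincide.
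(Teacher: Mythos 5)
The paper does not prove this statement; it simply cites Rouquier, so there is no ``paper's proof'' to compare against. Your sketch follows the modern route via minimal complexes in the Krull--Schmidt category $\SBim_n$, which is a legitimate and commonly used strategy (Rouquier's original argument constructs the homotopy equivalences directly rather than through minimal models). The far-commutativity and invertibility parts are sound: far-commutativity really is an isomorphism of complexes on the nose, and for invertibility the reduction to $B_i\otimes_R B_i\cong B_i(1)\oplus B_i(-1)$ together with two Gaussian eliminations, using $b_ib_i^*=b_i^*b_i=x_i-x'_{i+1}$ to identify the invertible components, does the job.

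The braid-relation step has a genuine gap. Passing to minimal complexes reduces the problem to showing that the two minimal complexes are \emph{isomorphic as complexes}. The decomposition $B_iB_{i+1}B_i\cong B_w\oplus B_i\cong B_{i+1}B_iB_{i+1}$ plus Gaussian elimination gives you that the two minimal complexes have the \emph{same terms} (the same indecomposables with the same shifts in each homological degree). But that is weaker than being isomorphic as complexes: the differentials must also be matched, and your closing sentence --- that the symmetry $s_is_{i+1}s_i=s_{i+1}s_is_{i+1}$ ``forces the two minimal complexes to coincide'' --- elides exactly this point; knowing $B_w$ is intrinsic to the rank-two parabolic says nothing a priori about the maps $B_w\to B_i\otimes_R B_{i+1}\oplus B_{i+1}\otimes_R B_i$, etc. To finish you need either (a) an explicit chain isomorphism between the two minimal complexes, or (b) a computation of the graded Hom spaces between consecutive terms (via Soergel's Hom formula) showing they are one-dimensional in the degrees where nonzero differentials of a minimal complex must live, so that the differentials are determined up to rescaling, followed by an argument that such rescalings extend to an isomorphism of complexes. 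Neither step is long, but it is real content that the sketch currently waves away.
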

This allows one to define a complex of Soergel bimodules $T(\beta)$ for any $n$-strand braid $\beta$. The HOMFLY-PT homology of $\beta$ is defined by first applying the Hochschild homology to each term of this complex, and then computing the homology of the result:
$$
\HHH(\beta)=H^*(\HH(T(\beta)).
$$
\begin{theorem}(\cite{KhSoergel,KR2})
The homology $\HHH(\beta)$ is a topological invariant of the link $L$ obtained by the closure of $\beta$.
\end{theorem}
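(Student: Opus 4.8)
The plan is to verify that $\HHH(\beta)$ — suitably normalized by the writhe and the number of strands — is unchanged under the two Markov moves, since by Markov's theorem two braids have isotopic closures precisely when they differ by a finite sequence of braid isotopies and such moves. Braid isotopy invariance is immediate from what has already been recorded: the $T_i^{\pm1}$ satisfy the braid relations up to homotopy (Rouquier), and $\HHH=H^*(\HH(-))$ is a functor on the homotopy category of complexes of Soergel bimodules, so homotopy equivalent complexes yield isomorphic $\HHH$.

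For Markov I (conjugation $\beta\mapsto\gamma\beta\gamma^{-1}$) the key input is that Hochschild homology of $R$-$R$-bimodules is a trace: there is a natural isomorphism $\HH(M\otimes_R N)\cong \HH(N\otimes_R M)$, coming from the symmetry of the cyclic tensor product that both sides compute. Writing $T(\gamma\beta\gamma^{-1})\simeq T(\gamma)\otimes_R T(\beta)\otimes_R T(\gamma)^{-1}$ and applying this with $M=T(\gamma)\otimes_R T(\beta)$ and $N=T(\gamma)^{-1}$, together with $T(\gamma)^{-1}\otimes_R T(\gamma)\simeq R$, gives $\HH(T(\gamma\beta\gamma^{-1}))\simeq \HH(T(\beta))$ as complexes; applying $H^*$ settles this case.

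The substantial step is Markov II (stabilization) $\beta\mapsto\beta\sigma_n^{\pm1}$, which raises the strand number from $n$ to $n+1$. One must show that $\HH$ of $T(\beta)\otimes_{R_{n+1}} T_n^{\pm1}$, computed over $R_{n+1}=\C[x_1,\dots,x_{n+1}]$, agrees up to a grading shift with $\HH$ of $T(\beta)$, computed over $R=\C[x_1,\dots,x_n]$. Here the precise structure of $T_n=[B_n\to R_{n+1}]$ — or $[R_{n+1}\to B_n]$ for the negative crossing — and of the maps $b_n,b_n^*$ enters: since $\beta$ only involves $\sigma_1,\dots,\sigma_{n-1}$ the variable $x_{n+1}$ is ``free,'' and the computation of $\HH(B_n)$ over $R_{n+1}$ (a controlled free module over $\HH(R_n)$, together with a one-step shift) lets one contract the extra two-term complex and identify the result, up to the expected $(a,q,t)$-shift, with $\HH(T(\beta))$. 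The delicate part is the triple-grading bookkeeping needed to pin down the normalization consistently, together with checking that the stabilization isomorphism is compatible with the trace isomorphism used for Markov I, so that the two local moves glue into a single well-defined link invariant; this compatibility is really the crux of the argument.
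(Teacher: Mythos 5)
The paper does not prove this statement; it is quoted as a background result and attributed to \cite{KhSoergel,KR2}. Your outline---braid isotopy via Rouquier's homotopy equivalences, Markov I via the trace property $\HH(M\otimes_R N)\cong\HH(N\otimes_R M)$ together with cancellation of $T(\gamma)^{-1}\otimes T(\gamma)\simeq R$, and Markov II via an explicit reduction of $\HH$ over $R_{n+1}$ of $T(\beta)\otimes T_n^{\pm1}$ to $\HH$ over $R_n$ of $T(\beta)$ up to a grading shift---is precisely the standard argument carried out in the cited references, so the proposal is correct and takes essentially the same approach that the paper relies on (one minor imprecision: after cancelling $T(\gamma)^{-1}\otimes T(\gamma)$ one gets a homotopy equivalence, not an isomorphism of complexes, but this still yields an isomorphism on $\HHH$).
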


Next, we define a deformation, or $y$-ification of the Rouquier complexes. We tensor all Soergel bimodules by $\C[\yy]$. The $y$-ified crossings have the form
$$
T_i^y=\left[
\begin{tikzcd}
B_i[\yy]\arrow[bend left]{r}{b_i} & R[\yy] \arrow[bend left]{l}{b_i^*(y_i-y_{i+1})}
\end{tikzcd}
\right],\ (T_i^y)^{-1}=\left[
\begin{tikzcd}
R[\yy]\arrow[bend left]{r}{b_i^*} & B_i[\yy] \arrow[bend left]{l}{b_i(y_i-y_{i+1})}
\end{tikzcd}
\right]
$$
These are curved complexes where the  differential  does not square to zero but satisfies $$D^2=(x_i-x'_{i+1})(y_i-y_{i+1}).$$

\begin{theorem}(\cite{GH})
The complexes $T_i^y,(T_i^y)^{-1}$ satisfy braid relations up to homotopy. For any braid $\beta$ and the corresponding permutation $w$ there is a well-defined (up to homotopy) curved complex $T^y(\beta)$ with 
\begin{equation}
\label{eq: curvature}
D^2=\sum_{i=1}^{n}y_i(x_i-x'_{w(i)}).
\end{equation}
\end{theorem}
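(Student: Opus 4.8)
The plan is to work with \emph{curved complexes} of Soergel bimodules over the relevant polynomial ring (in the variables $\xx$, $\xx'$ and $\yy$), up to homotopy, in the sense of \cite{GH}: an object carries a differential $D$ with $D^2$ equal to a fixed element $\omega$ of the base ring, a morphism is a degree-zero map intertwining the differentials, and a homotopy is a degree-$(-1)$ map with the usual relation. Every $\omega$ that arises is a polynomial in the base ring, hence automatically central, so the formalism applies with no extra hypotheses; note that two curved complexes can be homotopy equivalent only if their curvatures coincide. With this set-up the theorem breaks into: (a) the curvature formula; (b) far commutativity $T_i^yT_j^y\simeq T_j^yT_i^y$ for $|i-j|\ge 2$ together with invertibility $T_i^y(T_i^y)^{-1}\simeq R[\yy]\simeq (T_i^y)^{-1}T_i^y$; (c) the length-three relation $T_i^yT_{i+1}^yT_i^y\simeq T_{i+1}^yT_i^yT_{i+1}^y$; and (d) word-independence. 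Step (c) is the crux.

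For (a) I would first compute on a single crossing, where $b_ib_i^*=b_i^*b_i=x_i-x'_{i+1}$ gives $D^2=(x_i-x'_{i+1})(y_i-y_{i+1})$ for both $T_i^y$ and $(T_i^y)^{-1}$. For a word $\beta$ in the generators $\sigma_i^{\pm1}$, the curvature of the tensor product $T^y(\beta)$ is the sum of the curvatures of the tensor factors, each rewritten in terms of the outermost variables $\xx$ and innermost variables $\xx'$; a telescoping bookkeeping — tracking how the intermediate $x$-variables get successively identified as crossings are multiplied, while the $y$-variables are never permuted — collapses this sum to $\sum_{i=1}^n y_i(x_i-x'_{w(i)})$, which manifestly depends on $\beta$ only through $w$.

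For (b), far commutativity is immediate: for $|i-j|\ge 2$ the bimodule entries and structure maps of $T_i^y$ and $T_j^y$ involve disjoint variables, so $T_i^y\otimes T_j^y$ and $T_j^y\otimes T_i^y$ are literally isomorphic curved complexes, with equal curvatures by (a). For invertibility I would write $T_i^y\otimes(T_i^y)^{-1}$ out as an explicit array of Soergel bimodules with its curved differential and perform Gaussian elimination to cancel the contractible summand; the only step beyond the classical identity $T_iT_i^{-1}\simeq R$ is to check that the entry being cancelled remains invertible over $R[\yy]$ and is compatible with $D$, which holds because the $y$-deformation only alters the off-diagonal entries of the differential. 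What remains is $R[\yy]$ with zero curvature, consistent with $w=\mathrm{id}$, and the opposite order is symmetric.

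The main obstacle is (c). The two complexes $T_i^yT_{i+1}^yT_i^y$ and $T_{i+1}^yT_i^yT_{i+1}^y$ live over the same base ring and, by (a), have the same curvature for the common permutation $w$; setting $\yy=0$ returns Rouquier's classical homotopy equivalence of $T_iT_{i+1}T_i$ and $T_{i+1}T_iT_{i+1}$. The plan is to lift this equivalence across the $\yy$-deformation: replace both sides by minimal ($\yy=0$) representatives whose differentials have entries in the augmentation ideal, filter by $\yy$-degree, and construct the equivalence order by order in $\yy$; the obstruction to extending from $\yy$-degree $d$ to $\yy$-degree $d+1$ is a class in $H^1$ of the $\Hom$-complex relating the two underlying classical complexes, which I would argue vanishes using minimality/perversity of Rouquier complexes and the constraint imposed by the extra $\yy$-grading, while the resulting series converges by the usual finiteness in each internal degree. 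If one wishes to avoid obstruction theory, the equivalence can instead be written down explicitly by $\yy$-deforming the known formulas for the classical braid intertwiner (as in \cite{Rouquier}) and verifying directly that the deformed maps commute with the curved differentials. Finally, (d) is formal: any two braid words for a fixed $\beta$ are linked by moves of types (b) and (c), each giving a curved homotopy equivalence, and the curvature in (a) is preserved throughout because it sees only $w$; hence $T^y(\beta)$ is well defined up to homotopy.
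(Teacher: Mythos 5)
Your plan is structurally sound, and you have correctly identified the crux: everything turns on (c). Parts (a), (b), (d) are routine in the way you describe. In (a) it is worth making the telescoping explicit: over $B_i$ one has $x_j=x_j'$ for $j\neq i,i+1$ and $x_i+x_{i+1}=x_i'+x_{i+1}'$, so $(x_i-x_{i+1}')(y_i-y_{i+1})=\sum_j y_j(x_j-x'_{s_i(j)})$ for a single crossing, and composing amounts to collapsing the intermediate $x$-variables; this gives the stated formula depending only on $w$. Gaussian elimination in (b) is fine because the entry being cancelled is a unit over $R[\yy]$.

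The genuine gap is in (c). You write that the obstruction in $H^1$ of the $\Hom$-complex "vanishes using minimality/perversity of Rouquier complexes and the constraint imposed by the extra $\yy$-grading", but this is an assertion, not an argument. Minimality of a representative implies that homotopy equivalences of the underlying complexes are isomorphisms; it does not imply vanishing of the degree-one cohomology of the $\Hom$-complex, which in general is non-zero for Rouquier complexes. The thing that actually has to be proved is sharper: the order-$(d{+}1)$ obstruction sits in a fixed tridegree (homological, internal $q$-, and $\yy$-degree) determined by the bidegree of the backward arrows $b_i^*(y_i-y_{i+1})$, and in that tridegree the cohomology in question vanishes. Equivalently, one should isolate and prove a uniqueness statement for $y$-ifications with a prescribed curvature on complexes of Soergel bimodules, and then apply it to the two sides of the braid relation together with the classical Rouquier equivalence; this is roughly how the cited proof in [GH] is organized. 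Until that vanishing is actually established, (c) — and hence the theorem — does not follow from the rest of your outline. Your fallback of explicitly $\yy$-deforming the classical intertwiner and checking that it commutes with the curved differentials is viable in principle, but it is the same verification in a different disguise, and you would still owe the reader the computation.
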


Note that the identity braid $\one$ corresponds to $T^y(\one)=R[\yy]$ which we will also denote by $\one^y$.
We will also need curved Koszul complexes 
$$
K_{ij}^y=\left[
\begin{tikzcd}
R[\yy]\arrow[bend left]{r}{x_i-x_j} & R[\yy] \arrow[bend left]{l}{y_i-y_j}
\end{tikzcd}
\right],\ K_i^y=K_{i,i+1}^y.
$$
Given a braid $\beta$ with the corresponding permutation $w$, consider a minimal length factorization of $w^{-1}$ into (not necessary simple) transpositions:
$$
w^{-1}=(i_1 j_1)\cdots (i_s j_s).
$$
Then the $y$-ified homology of $\beta$ is defined as
\begin{equation}
\label{eq: def HY}
\HY(\beta)=H^*\left(\HH\left(T^y(\beta)\otimes K^y_{i_1,j_1}\otimes \cdots \otimes K_{i_s,j_s}^y\right)\right).
\end{equation}
One can check that after applying $\HH$ the differential indeed squares to zero, so we can define the homology of the resulting complex. Also, one can check that the result does not depend on factorization of $w$ into transpositions.

\begin{theorem}(\cite{GH})
The homology $\HY(\beta)$ is a topological invariant of the link $L$ obtained by the closure of $\beta$.
\end{theorem}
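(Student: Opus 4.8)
The plan is to verify invariance under the two Markov moves, since by Markov's theorem any two braids whose closures give isotopic links are connected by a finite sequence of conjugations (Markov I) and $\pm1$-stabilizations (Markov II). The well-definedness of $\HY(\beta)$ up to homotopy --- independence of the braid word presenting $\beta$, which follows from the braid relations for $T_i^y$, and independence of the chosen factorization of $w^{-1}$ into transpositions --- is taken as given; granting it, one must produce isomorphisms $\HY(\alpha\beta)\simeq\HY(\beta\alpha)$ and $\HY(\beta\sigma_n^{\pm1})\simeq\HY(\beta)$, the latter up to the standard grading shifts accounting for the change of writhe.

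For Markov I (conjugation), the key input is the cyclicity of Hochschild homology: for $R$-$R$-bimodules $M,N$ there is a natural isomorphism $\HH(M\otimes_R N)\simeq\HH(N\otimes_R M)$, and this extends to (curved) complexes of bimodules. Conjugation $\beta\mapsto\alpha\beta\alpha^{-1}$ replaces the underlying permutation $w$ by $uwu^{-1}$, which has the same cycle type; choosing for $(uwu^{-1})^{-1}$ the $u$-conjugate of a minimal-length factorization of $w^{-1}$, the cyclicity isomorphism carries $T^y(\alpha\beta)\otimes\bigotimes_k K^y_{i_k,j_k}$ to $T^y(\beta\alpha)\otimes\bigotimes_k K^y_{u(i_k),u(j_k)}$ compatibly with the curvature \eqref{eq: curvature}, and one checks that it descends to a homotopy equivalence after applying $\HH$. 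Since conjugation does not change the writhe, no grading shift is needed.

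The main obstacle is Markov II, the stabilization move: for $\beta\in\mathrm{Br}_n$ regarded inside $\mathrm{Br}_{n+1}$ one must show $\HY(\beta\sigma_n)\simeq\HY(\beta)$ up to shift, and similarly for $\sigma_n^{-1}$. Write $T^y(\beta\sigma_n)=T^y(\beta)\otimes T_n^y$ with $T_n^y=[B_n[\yy]\xrightarrow{b_n}R[\yy]]$ and bent-back component $b_n^*(y_n-y_{n+1})$, and compute the partial Hochschild homology in the last pair of variables $x_{n+1},x'_{n+1}$: the partial trace of $B_n$ over the $(n{+}1)$-st strand yields a rank-one free module over the remaining variables together with an exterior Hochschild generator, while the partial trace of $R[\yy]$ yields a Koszul-type factor in $x_{n+1},y_{n+1}$. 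The delicate point is that stabilization enlarges the cycle of $w$ containing the $n$-th strand, so the recipe \eqref{eq: def HY} attaches exactly one additional Koszul complex $K^y_{n,n+1}$; one must verify, by an explicit Gaussian elimination performed in the curved setting, that this new Koszul factor cancels the curvature contribution $y_{n+1}(x_{n+1}-x'_{n+1})$ coming from \eqref{eq: curvature}, so that after collapsing the resulting acyclic summands one is left precisely with the complex computing $\HY(\beta)$, tensored with the expected grading shift (and, in the negative-stabilization case, with the extra shift matching the behavior of $\HHH$ under $\sigma_n^{-1}$). Carrying the signs, the curvature terms, and the Koszul factors simultaneously through this cancellation --- and checking at the end that the outcome is literally the complex of \eqref{eq: def HY} for $\beta$, not merely homotopy equivalent to some complex with the correct homology --- is where the real work lies. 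Combining the two moves then yields topological invariance.
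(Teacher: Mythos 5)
The paper simply cites \cite{GH} for this statement, and your Markov-move outline is precisely the strategy that \cite{GH} follows: conjugation invariance via cyclicity of Hochschild homology (carrying a conjugated minimal factorization of $w^{-1}$ together with its Koszul factors), and stabilization invariance via a partial Hochschild trace on the last strand with the single additional Koszul factor $K^y_{n,n+1}$. You correctly flag the Markov II cancellation as the substantive step, though your sketch leaves it as a plan rather than a verification and slightly overstates what is needed at the end---for invariance of $\HY$ a homotopy equivalence of the resulting complexes suffices, not a literal identification with the complex of \eqref{eq: def HY} for $\beta$.
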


We will need the following facts.

\begin{proposition}
\label{prop: skein}
There is a chain map $\psi:T_i^y\to (T_i^y)^{-1}$. The cone of $\psi$ is homotopy equivalent to the Koszul complex $K_i^y$.
\end{proposition}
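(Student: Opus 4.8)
The plan is to produce the chain map $\psi\colon T_i^y\to (T_i^y)^{-1}$ explicitly at the level of the two-term curved complexes, and then identify its cone. Writing $T_i^y$ as the curved complex $\bigl[B_i[\yy]\rightleftarrows R[\yy]\bigr]$ with differential components $b_i$ and $b_i^*(y_i-y_{i+1})$, and $(T_i^y)^{-1}$ as $\bigl[R[\yy]\rightleftarrows B_i[\yy]\bigr]$ with components $b_i^*$ and $b_i(y_i-y_{i+1})$, I would define $\psi$ to be the vertical map that is the identity $R[\yy]\to R[\yy]$ in the ``$R$-slot'' and the composite $B_i[\yy]\xrightarrow{b_i} R[\yy]\xrightarrow{b_i^*} B_i[\yy]$ in the ``$B_i$-slot'' (or, symmetrically, one may place $b_i b_i^* = x_i - x_{i+1}'$ acting on the $R$-slot and the identity on $B_i$ — the two choices are homotopic). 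First I would check that this is a chain map, i.e. that it commutes with the curved differentials on the nose; this is a direct computation using only the relation $b_i b_i^* = b_i^* b_i = x_i - x_{i+1}'$ from the excerpt, together with the fact that both sides have the same curvature $(x_i - x_{i+1}')(y_i - y_{i+1})$, so the ``square'' terms match and $\psi$ genuinely intertwines the differentials.

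Next I would write down the cone of $\psi$. As a bigraded module it is $B_i[\yy]\oplus R[\yy]\oplus R[\yy]\oplus B_i[\yy]$ with a differential built from the differentials of $T_i^y$ and $(T_i^y)^{-1}$ together with the components of $\psi$. The key step is then a Gaussian-elimination / homotopy-reduction argument: the component of the cone differential that is an isomorphism (the identity map between the two middle $R[\yy]$ summands coming from the identity part of $\psi$) can be cancelled, contracting those two summands and leaving a two-term complex supported on $B_i[\yy]\oplus B_i[\yy]$ — or, if one uses the other model of $\psi$, on $R[\yy]\oplus R[\yy]$. I would carry out this cancellation and track the induced maps, which by the standard cone-reduction formula are the original differentials corrected by a composite through the eliminated isomorphism. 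Simplifying these composites using $b_i b_i^* = x_i - x_{i+1}'$ should yield exactly the Koszul differential with components $x_i - x_{i+1}'$ (which equals $x_i - x_j$ on the relevant quotient) and $y_i - y_{i+1}$, i.e. $K_i^y$ up to an overall grading shift. I would also observe that the curvature of the reduced complex is forced to be $(x_i-x_{i+1}')(y_i-y_{i+1})$ as well, matching $K_i^y$.

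The main obstacle I expect is bookkeeping rather than conceptual: one must be careful about grading shifts (the maps $b_i, b_i^*$ are not degree-preserving, and the cone introduces a homological shift), and about which of the several homotopy-equivalent models for $\psi$ makes the Gaussian elimination cleanest — a poor choice produces messy correction terms involving $(x_i - x_{i+1}')$ on both arrows that then need a second homotopy to clean up. A secondary subtlety is checking that all the reductions are compatible with the curved (rather than genuine) differential, so that each intermediate object is still a legitimate curved complex with the expected curvature; this is where one has to invoke the structural results of \cite{GH} on curved complexes rather than naive homological algebra. Once the model is chosen well, though, the computation collapses to a single elimination and a one-line simplification.
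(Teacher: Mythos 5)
The paper does not actually prove this proposition; it is stated without proof (the result is from \cite{GH}), so there is no paper proof to compare against. That said, your proposal contains a genuine error, not merely a bookkeeping issue. Your proposed $\psi$, with $\psi_B=b_i^*b_i\colon B_i\to B_i$ and $\psi_R=\mathrm{id}\colon R\to R$, is \emph{not} a chain map of curved complexes. Writing $y=y_i-y_{i+1}$, the equation $D_{(T_i^y)^{-1}}\psi=\psi D_{T_i^y}$ restricted to the $B_i$-summand of $T_i^y$ reads $b_i y\,\psi_B=\psi_R\,b_i$; with your choice this becomes $(x_i-x_{i+1})(y_i-y_{i+1})\,b_i=b_i$, which is false. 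Your ``symmetric'' alternative ($\psi_B=\mathrm{id}$, $\psi_R=b_ib_i^*$) fails the same equation in the form $b_i y=(x_i-x_{i+1})b_i$. The culprit is the $y_i-y_{i+1}$ factor in the backward ($y$-ified) arrows, which your $\psi$ does not compensate for; these are precisely the terms that distinguish the curved setting from the uncurved one, so this is a conceptual miss. A correct closed morphism is $\psi_B=\mathrm{id}\colon B_i\to B_i$, $\psi_R=(y_i-y_{i+1})\colon R\to R$; one checks directly that both halves of $D'\psi=\psi D$ hold because $y_i-y_{i+1}$ is central and commutes with $b_i,b_i^*$.

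With the corrected $\psi$, the rest of your plan does go through, and here your intuition was right: the cone has four summands $B_i,R,R,B_i$, the component $\mathrm{id}\colon B_i\to B_i$ can be cancelled by Gaussian elimination (you should eliminate the $B_i$'s, not the $R$'s, since $K_i^y$ lives on $R\oplus R$), and the induced differentials on the surviving $R\oplus R$ are exactly the zig-zag correction $-b_ib_i^*=-(x_i-x_{i+1})$ in one direction and the inherited $\psi_R=y_i-y_{i+1}$ in the other, which is $K_i^y$ up to sign. So the overall strategy --- write an explicit $\psi$, form the cone, cancel the $B_i$'s --- is sound; the flaw is that the map you wrote down is not closed, and you cannot repair it by choosing the ``other model'' since neither of your two candidates satisfies the chain-map equation. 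Your remark that the two candidates are homotopic is also ungrounded, since neither is a morphism to begin with, and they even have different internal gradings.
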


\begin{corollary}
Suppose that $\beta_+=\beta_1\sigma_i\beta_2$ and $\beta_-=\beta_1\sigma_i^{-1}\beta_2$.  Then there are maps in $y$-ified homology
\begin{multline}
\label{eq: skein maps}
\psi_{+-}: \HY(\beta_+)\to \HY(\beta_-),\\ \psi_{-0}:\HY(\beta_-)\to \HY(T^y(\beta_1)K_i^yT^y(\beta_2)),\\  \psi_{0+}:\HY(T^y(\beta_1)K_i^yT^y(\beta_2))\to \HY(\beta_+) 
\end{multline}
which fit into a long exact sequence. 
\end{corollary}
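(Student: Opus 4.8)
The plan is to globalize the local mapping-cone sequence of Proposition \ref{prop: skein} and then pass to Hochschild homology. By that proposition there is a chain map $\psi\colon T_i^y\to (T_i^y)^{-1}$ of curved complexes whose mapping cone is homotopy equivalent to $K_i^y$. First I would apply the functor $T^y(\beta_1)\otimes_R(-)\otimes_R T^y(\beta_2)$: tensoring on either side with a fixed complex preserves mapping cones up to homotopy, and by the definition of $T^y(\beta_\pm)$ as a tensor product of crossing complexes one has $T^y(\beta_1)\otimes T_i^y\otimes T^y(\beta_2)\simeq T^y(\beta_+)$ and $T^y(\beta_1)\otimes (T_i^y)^{-1}\otimes T^y(\beta_2)\simeq T^y(\beta_-)$ (recall that $\sigma_i$ and $\sigma_i^{-1}$ induce the same permutation). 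This yields a chain map $\widetilde\psi\colon T^y(\beta_+)\to T^y(\beta_-)$ of curved complexes with $\mathrm{cone}(\widetilde\psi)\simeq T^y(\beta_1)K_i^y T^y(\beta_2)$.

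The point that will require real work is the passage from this curved-complex sequence to a statement about $\HY$, namely the bookkeeping of Koszul complexes. The key observation is that $T^y(\beta_1)K_i^y T^y(\beta_2)$, being the mapping cone of a chain map between the curved complexes $T^y(\beta_+)$ and $T^y(\beta_-)$, carries the same curvature $\sum_i y_i(x_i-x'_{w(i)})$ as they do, where $w$ is the common permutation of $\beta_\pm$. Hence a single collection of Koszul complexes --- those indexed by a minimal transposition factorization of $w^{-1}$, as prescribed in \eqref{eq: def HY} --- simultaneously resolves all three terms: tensoring the mapping-cone sequence above with these Koszul complexes and applying $\HH$ termwise yields a mapping-cone sequence of honest (square-zero) complexes, whose cohomologies are $\HY(\beta_+)$, $\HY(\beta_-)$ and $\HY(T^y(\beta_1)K_i^y T^y(\beta_2))$. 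For the last identification I would invoke the independence of $\HY$ from the chosen factorization (from \cite{GH}): one has to check that resolving $T^y(\beta_1)K_i^y T^y(\beta_2)$ by the Koszul complexes of $w^{-1}$ matches the direct definition, keeping track of the interaction of the internal $K_i^y$ --- which, slid to the end of the braid, becomes a Koszul factor for a conjugate transposition --- with the remaining Koszul factors.

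Finally, a mapping-cone sequence of honest complexes induces the desired long exact sequence in cohomology
\[
\cdots\to\HY(\beta_+)\xrightarrow{\psi_{+-}}\HY(\beta_-)\xrightarrow{\psi_{-0}}\HY(T^y(\beta_1)K_i^y T^y(\beta_2))\xrightarrow{\psi_{0+}}\HY(\beta_+)\to\cdots .
\]
The remaining points are formal: exactness of the tensor and $\HH$ functors, compatibility of homotopy equivalences with mapping cones and with cohomology, and the identification of the three connecting maps with $\psi_{+-}$, $\psi_{-0}$, $\psi_{0+}$. Thus the main obstacle is the second step --- reconciling the curved mapping cone with the $\HY$ of each term through the Koszul-complex bookkeeping --- while the globalization and the passage to the long exact sequence are routine.
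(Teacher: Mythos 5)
The paper states this corollary without giving a proof, so you are filling in what the authors treat as immediate from Proposition~\ref{prop: skein}, and your argument is the natural one: globalize the local cone $T_i^y\to(T_i^y)^{-1}\to K_i^y$ by tensoring with $T^y(\beta_1)$ and $T^y(\beta_2)$, tensor the resulting cone sequence with the Koszul complexes prescribed by \eqref{eq: def HY} for the common permutation $w$ of $\beta_\pm$, apply $\HH$, and pass to the long exact sequence of the cone. Your key observation --- that all three curved complexes carry the identical curvature $\sum_i y_i(x_i-x'_{w(i)})$, so a single collection of Koszul factors resolves all three simultaneously --- is exactly the point that makes the bookkeeping work.

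Two small remarks. First, the paper never gives an independent definition of $\HY(T^y(\beta_1)K_i^yT^y(\beta_2))$; the tensor-with-Koszul recipe applied to the common curvature \emph{is} the definition of $\HY$ for this object, so the reconciliation you worry about in your second step (sliding the internal $K_i^y$ past $T^y(\beta_2)$ and comparing with ``the direct definition'') is not actually needed here --- there is nothing else to compare against. Second, you invoke ``exactness of the $\HH$ functor'' among the routine checks; Hochschild homology is of course not exact, but what is used is only that it is additive and hence commutes with mapping cones, which suffices. Neither point affects the correctness of the argument.
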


In \cite{GHM} the authors together with Hogancamp defined a family of operators $\CF_k$ on $y$-ified link homology. We summarize some of the properties of $\CF_k$ in the following theorem.

\begin{theorem}(\cite{GHM})
\label{thm: Fk summary}
The operators $\CF_k$ satisfy the following properties:
\begin{itemize}
\item[(a)] The space $\HY(L)$, together with the action of $\CF_k$  is a topological invariant of the link $L$.
\item[(b)] The operators $\CF_k$ pairwise commute: $[\CF_k,\CF_m]=0$.
\item[(c)] The commutation relations between $\CF_k,x_i$ and $y_i$ are given by $[\CF_k,x_i]=0$, $[\CF_k,y_i]=x_i^k$.
\item[(d)] The maps \eqref{eq: skein maps} in the skein exact triangle commute with the action of $\CF_k$.
\item[(e)] There exists an operator $\CE_1$ such that $(\CE_1,\CF_1)$ generate an action of the Lie algebra $\mathfrak{sl}_2$ on $\HY(L)$.  
\item[(f)] There exists an automorphism $\Phi:\HY(L)\to \HY(L)$ such that $\Phi \CE_1=\CF_1\Phi$ and $\Phi x_i=y_i\Phi$. Furthermore, $\Phi^2=\mathrm{Id}$. 
\end{itemize}
\end{theorem}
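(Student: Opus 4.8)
The plan is to construct the operators $\CF_k$ directly on the curved complex computing $\HY$, verify the six properties in turn, and isolate the one genuinely deep point, namely the hard Lefschetz statement behind (e). Write $C = \HH\!\left(T^y(\beta)\otimes K^y_{i_1,j_1}\otimes\cdots\otimes K^y_{i_s,j_s}\right)$ with differential $D$, so $\HY(\beta)=H^*(C)$. The structural observation that drives everything is that, by the explicit form of the $y$-ified crossings $T_i^y$ and of the Koszul complexes $K_i^y$, the differential $D$ is affine-linear in the $y_i$: we may write $D = d + \sum_{i=1}^n y_i\, h_i$ with $d,h_1,\dots,h_n$ being $\C[\xx]$-linear and $y$-independent. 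Since the curvature \eqref{eq: curvature} is homogeneous of degree $1$ in $y$, comparing coefficients gives $d^2 = 0$, $\sum_{i,j} y_iy_j\, h_ih_j = 0$, and $[d,h_i]_+ = x_i - x'_{w(i)}$; after applying $\HH$ the last expression becomes null-homotopic, since there $x_i$ and $x'_{w(i)}$ are identified along each component of $L$.

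Now set $\CF_k = \sum_{i=1}^n x_i^k\,\partial_{y_i}$ on $C$, which makes sense as $C$ is $\C[\yy]$-free. Then $[\CF_k,D] = \sum_i x_i^k\, h_i$, and since $[d,h_i]_+ = x_i - x'_{w(i)}$ is null-homotopic after $\HH$, this commutator is a boundary, so $\CF_k$ descends to $\HY(\beta)$; equivalently one adds a $y$-independent correction making it a strict chain map. Property (a) follows by carrying this construction through the homotopy equivalences implementing the braid relations and the Markov moves: $\CF_k$ is assembled uniformly and locally from the per-strand data $(x_i,y_i)$, so each such equivalence intertwines it with the corresponding operator. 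Properties (b), (c) are immediate from the formula, since $[\partial_{y_i},x_j]=0$, $[\partial_{y_i},y_j]=\delta_{ij}$, and the ``horizontal'' vector fields $\sum_i x_i^k\partial_{y_i}$ and $\sum_j x_j^m\partial_{y_j}$ commute (and the $y$-independent corrections do not spoil this). Property (d) is a chain-level check: the maps $\psi_{+-},\psi_{-0},\psi_{0+}$ of Proposition \ref{prop: skein} are supported near the $i$-th strand and hence commute, up to the same correction, with the uniformly-built $\CF_k$.

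The remaining and hardest point is (e). The naive dual $\sum_i y_i\partial_{x_i}$ need not descend to $\HY$, since in the $y$-ified complex the $x_i$ lack the freeness the $y_i$ enjoy. Instead one proves that $\CF_1$ satisfies the hard Lefschetz property --- on each graded piece, the appropriate iterate of $\CF_1$ is an isomorphism --- by a positivity/Hodge-theoretic argument on $C$ (or by reduction to links whose $\HY$ is computed explicitly). Given hard Lefschetz, $\mathfrak{sl}_2$-representation theory manufactures the lowering operator $\CE_1$ and the semisimple $H$ with the standard relations, which is (e). For (f), set $\Phi = \exp(\CE_1)\exp(-\CF_1)\exp(\CE_1)$; the exponentials truncate because $\CE_1,\CF_1$ act nilpotently on each bigraded piece, $\Phi$ conjugates $\CF_1$ to $\CE_1$, combining (c) with the mirror relations for $\CE_1$ gives $\Phi x_i\Phi^{-1}=y_i$, and $\Phi^2=\mathrm{Id}$ descends from the corresponding identity in $\mathrm{PSL}_2$. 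I expect the main obstacle to be precisely this hard Lefschetz input: the rest is formal homological bookkeeping once the affine-linear dependence of $D$ on $y$ is in place, but the existence of $\CE_1$ genuinely requires finiteness and positivity of $\HY(L)$.
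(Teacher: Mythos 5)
The theorem you are attempting to prove is stated in this paper with the citation \cite{GHM}; the paper offers no proof of its own, only a short remark afterwards explaining how parts (e) and (f) are recovered from the ``curious hard Lefschetz'' property via the explicit formulas \eqref{eq: def E1} and \eqref{eq: def Phi}. So there is no proof in the paper against which your reconstruction can be checked line by line, and your attempt must be assessed on its own merits as a reconstruction of the argument in \cite{GHM}.

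For (a)--(d) your skeleton is correct in spirit --- property (c) essentially forces $\CF_k$ to be $\sum_i x_i^k\partial_{y_i}$ plus a $y$-independent correction, and the affine dependence of $D$ on $\yy$ is indeed what makes such an operator plausible. But the sentence ``equivalently one adds a $y$-independent correction making it a strict chain map'' is where the entire content of \cite{GHM} lives. Showing that $[\CF_k,D]=\sum_i x_i^k h_i$ is nullhomotopic on a fixed complex is not enough: to get (a) and (d) you need to construct the correcting homotopies \emph{coherently} across all braid words, so that the induced operator on homology is independent of the chosen word, compatible with the homotopy equivalences realizing braid relations and Markov moves, and compatible with the skein maps. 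That coherence is a nontrivial construction, not ``formal homological bookkeeping,'' and simply saying ``$\CF_k$ is assembled uniformly and locally'' asserts the conclusion rather than proving it. One small inaccuracy along the way: the vanishing of $D^2$ after applying $\HH$ is \emph{not} because $\HH$ identifies $x_i$ with $x'_{w(i)}$ (it identifies $x_i$ with $x'_i$); it requires tensoring with the Koszul complexes $K^y_{i_p,j_p}$ in \eqref{eq: def HY}, whose curvature is designed to cancel that of $T^y(\beta)$.

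For (e) you correctly identify the hard Lefschetz statement as the genuine input, and this matches what the paper says after the theorem. For (f), however, your construction is wrong as stated: the element $w=\exp(\CE_1)\exp(-\CF_1)\exp(\CE_1)$ lives in $\mathrm{SL}_2$ and satisfies $w^2=-\mathrm{Id}$, so on an irreducible summand of highest weight $j$ it acts with square $(-1)^j$, not $\mathrm{Id}$. Since $\HY(L)$ contains $\mathfrak{sl}_2$-irreducibles of odd highest weight, your $\Phi$ would \emph{not} be an involution, and passing to $\mathrm{PSL}_2$ does not help because not every representation appearing factors through $\mathrm{PSL}_2$. This is precisely why the paper instead defines $\Phi$ by the ad hoc normalization \eqref{eq: def Phi}, whose coefficients are chosen to force $\Phi^2=\mathrm{Id}$ directly. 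The relation $\Phi x_i = y_i\Phi$ also requires the extra input $[\CE_1,x_i]=y_i$, $[\CE_1,y_i]=0$ (dual to (c)), which must be established from the construction and does not follow from $\mathfrak{sl}_2$ theory alone.
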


Parts (e) and (f) of Theorem \ref{thm: Fk summary} follow from the ``curious hard Lefshetz" property of the operator $\CF_1$, namely that
$$
\CF_1^{j}:\HY^{i}_{-2j,k}\to \HY^{i}_{2j,k+2j}
$$
is an isomorphism for all $i,k$ and all $j\ge 0$. One can then reconstruct the actions of $\CE_1$ and of $\Phi$ using representation theory of $\mathfrak{sl}_2$. Note that while $\HY(L)$ is infinite-dimensional, it decomposes as a direct sum of finite-dimensional representations of $\mathfrak{sl}_2$.

A nonzero vector $v\in \HY^{i}_{-2j,k}$  is called a {\em highest weight vector} if $j\ge 0$ and $\CF_1^{j+1}v=0$. One can prove (see for example \cite[Lemma 7.2]{GHM}) that ``curious hard Lefshetz" property implies that $\HY$ has a basis of the form $\CF_1^sv$ where $v\in \HY^{i}_{-2j,k}$ is a highest weight vector and $s\le j$. The operator $\CE_1$ is defined in this basis by
\begin{equation}
\label{eq: def E1}
\CE_1(\CF_1^sv)=\begin{cases}
0 & \mathrm{if}\ s=0\\
s(j-s+1)\CF_1^{s-1}v & \mathrm{if}\ s>0.
\end{cases}
\end{equation}
The operator $\Phi$ is defined by
\begin{equation}
\label{eq: def Phi}
\Phi(\CF_1^sv)=\begin{cases}
\frac{1}{(j-s)\cdots (s+1)}\CF_1^{j-s}v & \mathrm{if}\ j-2s>0\\
\CF_1^sv & \mathrm{if}\ j-2s=0\\
s\cdots (j-s+1)\CF_1^{j-s}v & \mathrm{if}\ j-2s<0.
\end{cases}
\end{equation}
We can use the action of $\Phi$ to define more operators.
\begin{definition}
\label{def: Ek homology}
We define operators $\CE_k=\Phi \CF_k \Phi$ for all $k$.
\end{definition}

It is worth to emphasize that while the operators $\CF_k$ are constructed in \cite{GHM} on the chain level, the operators $\CE_k$ (or even $\CE_1$) are only known on the level of homology and are rather mysterious.
Nevertheless, we have the following.

\begin{corollary}
\label{cor: hard Lefshetz}
Suppose that $W:\HY(L)\to \HY(L')$ is a grading-preserving map between the $y$-ified homologies of two links $L,L'$ commuting with the action of $\CF_k$ for all $k$. Then $W$ commutes with the action of $\Phi$ and of dual operators $\CE_k$.
\end{corollary}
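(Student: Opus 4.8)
The plan is to exploit that $\CE_1$ and $\Phi$ are not extra data but are reconstructed from $\CF_1$ together with the grading, via the explicit formulas \eqref{eq: def E1} and \eqref{eq: def Phi}; once $W$ is shown to intertwine these reconstructed operators, the statement for the higher $\CE_k$ is purely formal from $\CE_k=\Phi\CF_k\Phi$.

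First I would record that, by the curious hard Lefschetz property, $\HY(L)$ (and likewise $\HY(L')$) has a homogeneous basis of vectors $\CF_1^s v$, where $v\in\HY^i_{-2j,k}$ is a highest weight vector, i.e.\ $\CF_1^{j+1}v=0$, and $0\le s\le j$; on such vectors $\CE_1$ and $\Phi$ act by \eqref{eq: def E1} and \eqref{eq: def Phi}, and the coefficients there depend only on $s$ and on the weight $-2j$ of the generating highest weight vector. Since $W$ preserves all gradings and commutes with $\CF_1$, whenever $v\in\HY^i_{-2j,k}(L)$ is highest weight the element $W(v)$ lies in $\HY^i_{-2j,k}(L')$ and satisfies $\CF_1^{j+1}W(v)=W(\CF_1^{j+1}v)=0$, so $W(v)$ is either zero or again a highest weight vector of the same weight $-2j$. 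In particular $\CE_1$ and $\Phi$ act on the string generated by $W(v)$ with exactly the same coefficients as on the string generated by $v$.

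It then suffices to check the commutation relations on the chosen basis. Using $W(\CF_1^sv)=\CF_1^sW(v)$ one computes
\[
\CE_1W(\CF_1^sv)=\CE_1\bigl(\CF_1^sW(v)\bigr)=s(j-s+1)\,\CF_1^{s-1}W(v)=W\bigl(s(j-s+1)\CF_1^{s-1}v\bigr)=W\CE_1(\CF_1^sv),
\]
and the identical manipulation with \eqref{eq: def Phi} gives $\Phi W(\CF_1^sv)=W\Phi(\CF_1^sv)$ (the case $W(v)=0$ being trivial). Hence $W\CE_1=\CE_1W$ and $W\Phi=\Phi W$, and therefore for every $k$
\[
W\CE_k=W\Phi\CF_k\Phi=\Phi W\CF_k\Phi=\Phi\CF_k W\Phi=\Phi\CF_k\Phi W=\CE_kW.
\]

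The one point needing care — and the only genuine subtlety — is that \eqref{eq: def E1} and \eqref{eq: def Phi} are written relative to a choice of highest weight basis, so I must note that the operators they define are intrinsic (a consequence of the $\mathfrak{sl}_2$-representation theory underlying the curious hard Lefschetz property), and that $W$ carries a highest weight decomposition of $\HY(L)$ to a possibly degenerate one of $\HY(L')$ with matching weights. This is precisely what grading-preservation together with $[W,\CF_1]=0$ provides; everything else is the one-line computation above.
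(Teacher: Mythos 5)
Your argument is correct and is essentially the same as the paper's: you observe that $W$ preserves the grading and commutes with $\CF_1$, hence carries highest weight (primitive) vectors to highest weight vectors of the same weight or to zero, so the coefficient formulas \eqref{eq: def E1} and \eqref{eq: def Phi} intertwine with $W$; you then deduce $W\CE_k=\CE_kW$ formally from $\CE_k=\Phi\CF_k\Phi$. The paper's proof is exactly this, stated more tersely.
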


\begin{proof}
Since $W$ commutes with $\CF_1$ and preserves gradings, it sends highest weight vectors to highest weight vectors. By \eqref{eq: def E1} and \eqref{eq: def Phi} we conclude that $W$ commutes with $\CE_1$ and with $\Phi$, and the result follows.
\end{proof}

Next, we briefly discuss the differentials $d_N$. These were introduced in \cite{Rasmussen} as the first nontrivial differentials in the spectral sequences from $\HHH$ to $\mathfrak{gl}(N)$ Khovanov-Rozansky homology. We refer to \cite[Section 3]{BGHW} for more details and an equivalent construction of $d_N$. 
For the purposes of this paper, we will only need the following results:

\begin{proposition}\cite[Proposition 3.18]{BGHW}
\label{prop: dN functorial}
a) The action of $d_N$ commutes with arbitrary   maps of Soergel bimodules, and with the chain maps of complexes of Soergel bimodules.

b)In particular, $d_N$ commutes with tautological classes $\CF_k$ for all $k$. 
\end{proposition}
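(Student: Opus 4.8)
The plan is short: part (a) is precisely \cite[Proposition~3.18]{BGHW}, so I would cite it, and part (b) will then follow formally. Still, let me recall the mechanism. The differential $d_N$ is built as a natural endomorphism of the Hochschild homology functor $\HH(-)$ on $\SBim_n$: on $\HH(B)$ it is given by an explicit formula involving multiplication by polynomials in the variables $x_i,x_i'$ together with the anticommuting Hochschild-degree generators $\theta_i$, and such a formula is manifestly compatible with every homomorphism of $R$-$R$-bimodules $B\to B'$. Applying $\HH$ termwise to a chain map $F\colon C^\bullet\to D^\bullet$ of complexes of Soergel bimodules, and using that $d_N$ anticommutes with the internal differential, one gets that $\HH(F)$ commutes with $d_N$; passing to homology, this says that the map induced by $F$ on $\HHH$ commutes with the action of $d_N$.

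For part (b), recall from \cite{GHM} that each tautological class $\CF_k$ is, by its very construction, the map on homology induced by an honest chain map of the ($y$-ified) Rouquier complexes, i.e.\ of complexes assembled from the bimodules $B_i$ tensored with $\C[\yy]$. The construction of $d_N$ is $\C[\yy]$-linear and equally natural on this larger category, and it anticommutes with the curvature term $D^2$ as well as with the Hochschild differential; hence the argument of part (a) applies verbatim and gives $d_N\CF_k=\CF_k d_N$ on $\HY(L)$ --- and therefore on $\overline{\HHH}(L)$ for knots, via \eqref{eq: knots intro}.

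The only thing demanding care --- and where I would put the effort --- is the bookkeeping required to make ``$d_N$ commutes with chain maps'' literally apply to $\CF_k$: one must confirm that $\CF_k$ is realized as a genuine chain map (rather than merely a map up to homotopy), and that the $d_N$-action is compatible with the curvature of the $y$-ified complexes. Both are available --- the former from the explicit chain-level construction in \cite{GHM}, the latter because $d_N$ is defined by the same natural formula regardless of curvature --- so there is no essential obstacle, and the proposition reduces to \cite[Proposition~3.18]{BGHW} combined with the chain-level nature of the operators $\CF_k$.
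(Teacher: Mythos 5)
Your proposal is correct and follows the same logic as the paper, which itself offers no proof and simply cites \cite[Proposition~3.18]{BGHW} for part (a), treating part (b) as an immediate consequence because the $\CF_k$ are, as you note (and as the paper emphasizes just after Definition~\ref{def: Ek homology}), constructed in \cite{GHM} as genuine chain maps. One small imprecision: you say $d_N$ ``anticommutes with the curvature term $D^2$''; since the curvature is multiplication by the even element $\sum y_i(x_i-x'_{w(i)})$ and $d_N$ is linear over the polynomial variables, they in fact \emph{commute} --- but this sign slip does not affect the argument, which only needs $d_N$ to be compatible with the curved differential and with polynomial multiplication.
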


\subsection{Diagonal harmonics}

We recall some results of Haiman with \cite{HaimanInv} as the main reference. 

Consider the ring $\C[\xx,\yy]=\C[x_1,\ldots,x_n,y_1,\ldots,y_n]$ as above, it has an action of $S_n$ which permutes $x_i$ and $y_i$ simultaneously. Let $\C[\xx,\yy]^{S_n}_{+}$ be the ideal in $\C[\xx,\yy]$ generated by homogeneous symmetric polynomials of positive degree. 

\begin{definition}
The diagonal coinvariant ring is defined by 
$$
\DR_n=\C[\xx,\yy]/\C[\xx,\yy]^{S_n}_{+}
$$
\end{definition}

\begin{theorem}[\cite{HaimanInv}]
We have $\dim \DR_n=(n+1)^{n-1}$ and $\dim \DR_n^{\sgn}=c_n$ where 
$$
c_n=\frac{1}{n+1}\binom{2n}{n}
$$
is the $n$th Catalan number.
\end{theorem}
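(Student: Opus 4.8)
The plan is to follow Haiman's proof via the Hilbert scheme of points in the plane. Both dimension statements will come out of one master result: the bigraded $S_n$-equivariant Frobenius series of $\DR_n$ equals $\nabla e_n$, where $\nabla$ is the Macdonald eigenoperator fixing each modified Macdonald polynomial $\tilde H_\mu$ with eigenvalue $t^{n(\mu)}q^{n(\mu')}$. Granting this, $\dim\DR_n$ is the dimension of the $S_n$-representation with Frobenius characteristic $\nabla e_n|_{q=t=1}$, which counts parking functions on $n$ cars, namely $(n+1)^{n-1}$; and $\dim\DR_n^{\sgn}=\langle\nabla e_n,e_n\rangle|_{q=t=1}=c_n(1,1)$, the $q,t$-Catalan polynomial at $q=t=1$, which counts Dyck paths of semilength $n$, namely the Catalan number $c_n$. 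So the entire content is the character formula; moreover once the geometry is in place, $\dim\DR_n=(n+1)^{n-1}$ drops out of an Euler-characteristic computation while $\dim\DR_n^{\sgn}=c_n$ is simply its sign-isotypic part.

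First I would set up the geometry. Let $\mathrm{Hilb}^n(\C^2)$ be the Hilbert scheme of $n$ points in the plane, with the $(\C^*)^2$-action whose fixed points are the monomial ideals $I_\mu$, $\mu\vdash n$; let $B_n$ be the tautological rank-$n$ bundle (with fiber $\C[x,y]/I$ over $[I]$) and $\mathcal O(1)=\wedge^n B_n$ the ample line bundle; and let $X_n\subset\mathrm{Hilb}^n(\C^2)\times(\C^2)^n$ be the isospectral Hilbert scheme, with its finite map $\rho\colon X_n\to\mathrm{Hilb}^n(\C^2)$ and residual $S_n$-action. By definition $\DR_n$ is the coordinate ring of the scheme-theoretic fiber of $(\C^2)^n\to\Sym^n\C^2$ over the origin; pulling this back along $X_n\to(\C^2)^n$ leads one to study the Procesi bundle $P=\rho_*\mathcal O_{X_n}$, suitably twisted by $\mathcal O(1)$, over the zero fiber $\mathrm{Hilb}^n_0\subset\mathrm{Hilb}^n(\C^2)$, which (unlike $\mathrm{Hilb}^n(\C^2)$ itself) is projective.

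The two essential inputs are then: (i) the \emph{$n!$ theorem} --- $X_n$ is normal, Cohen--Macaulay and Gorenstein; equivalently $\rho$ is finite flat of degree $n!$ and $P$ is a vector bundle of rank $n!$ whose fibers carry the regular representation of $S_n$, the fiber at $I_\mu$ being the Garsia--Haiman module $D_\mu$ with bigraded Frobenius character $\tilde H_\mu[X;q,t]$; and (ii) a \emph{vanishing theorem} --- the higher cohomology of $P$ twisted by a nonnegative power of $\mathcal O(1)$ vanishes. Granting (i)--(ii), Haiman's argument identifies $\DR_n$, as a bigraded $S_n$-module, with a cohomology group built from $P$ and $\mathcal O(1)$ whose dimension --- by the vanishing theorem --- equals an equivariant Euler characteristic; I would compute that characteristic by equivariant localization at the fixed points $I_\mu$, where $P$ contributes $\tilde H_\mu[X;q,t]$, the line bundle $\mathcal O(1)$ contributes $t^{n(\mu)}q^{n(\mu')}$, and the tangent spaces contribute the standard arm/leg weight factors. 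Assembling these reproduces exactly the Macdonald-basis expansion of $\nabla e_n$, and the two numerical claims then follow by setting $q=t=1$ and invoking the standard evaluations of $\nabla e_n$ and of the $q,t$-Catalan polynomial at $q=t=1$.

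The main obstacle is input (i). Haiman reduces the $n!$ theorem to a commutative-algebra statement about \emph{polygraphs}: the coordinate ring of the subvariety $Z(l,k)\subset(\C^2)^l\times(\C^2)^k$ defined as the union, over all functions $f\colon\{1,\dots,l\}\to\{1,\dots,k\}$, of the loci where the $i$th of the first $l$ points equals the $f(i)$th of the last $k$ points, is a free module over the polynomial ring $\C[(\C^2)^k]$. This is the hard theorem, proven by an intricate double induction on $k$ and $l$ with a carefully chosen term order and a ``rank'' filtration controlling the defining ideals; essentially all the difficulty lies here (and it subsumes Macdonald positivity, i.e.\ the character of the fibers $D_\mu$ being $\tilde H_\mu$). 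Given it, the Gorenstein property of $X_n$ together with the symplectic-resolution structure of $\mathrm{Hilb}^n(\C^2)$ (Serre duality plus a Grauert--Riemenschneider-type vanishing, or the Bridgeland--King--Reid--Haiman derived equivalence) yields the vanishing theorem (ii), and the localization computation and the final $q=t=1$ evaluations are routine by comparison.
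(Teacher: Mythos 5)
The paper does not prove this theorem; it is stated as a citation to Haiman's paper \cite{HaimanInv}, so there is no proof in the present text to compare against. Your sketch is, in substance, an accurate high-level account of Haiman's actual argument: reducing both dimension formulas to the Frobenius character formula $\mathrm{Frob}(\DR_n)=\nabla e_n$, obtaining that from sheaf cohomology of the twisted Procesi bundle on the zero fiber of the Hilbert scheme via the $n!$ theorem and the vanishing theorem, computing the Euler characteristic by torus localization, and specializing at $q=t=1$ to get $(n+1)^{n-1}$ (parking functions) and $c_n$ ($q,t$-Catalan at $q=t=1$). You also correctly identify the polygraph freeness theorem as the commutative-algebraic core of the $n!$ theorem, and correctly note that the vanishing input is genuinely a second hard step (Haiman deduces it from Serre duality together with the Gorenstein property of $X_n$ and a derived McKay equivalence à la Bridgeland--King--Reid). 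Two small points of precision worth keeping in mind: the $n!$/polygraph material is from Haiman's 2001 JAMS paper, while the cohomology/character formula and the dimension formulas are in the cited 2002 Inventiones paper; and the identification $\DR_n\cong H^0(\mathrm{Hilb}^n_0, P\otimes\mathcal{O}(1))$ itself requires a nontrivial argument (a Koszul/Čech computation using the vanishing theorem), not just a definition-unwinding --- your phrasing slightly understates this. As a roadmap your proposal is correct and faithful to the source; it is of course a strategy, not a self-contained proof, since the two major inputs are only invoked, but that is appropriate for a result that the paper itself imports by citation.
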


Let $J$ be the ideal in $\C[\xx,\yy]$ generated by all antisymmetric polynomials, and let $\mm=(\xx,\yy)$ be the maximal ideal in $\C[\xx,\yy]$. The following is well known but we include a  proof for the reader's convenience. 

\begin{lemma}
\label{lem: catalan}
We have $\DR_n^{\sgn}\simeq J/\mm J$.
\end{lemma}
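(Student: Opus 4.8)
The plan is to realize both $\DR_n^{\sgn}$ and $J/\mm J$ as one and the same quotient of the space $A^{\sgn}$ of antisymmetric polynomials, where $A:=\C[\xx,\yy]$ and $A^{\sgn}$ is the $\sgn$-isotypic component of $A$ for the $S_n$-action. I would begin by rewriting the left-hand side. The ideal $I:=\C[\xx,\yy]^{S_n}_+A$ is generated by symmetric elements, so multiplication by these generators is $S_n$-equivariant and preserves isotypic type; hence $I$ splits along the isotypic decomposition of $A$, and in particular $I^{\sgn}=\C[\xx,\yy]^{S_n}_+A^{\sgn}$. Passing to $\sgn$-isotypic components in $\DR_n=A/I$ then yields $\DR_n^{\sgn}\cong A^{\sgn}/\big(\C[\xx,\yy]^{S_n}_+A^{\sgn}\big)$.

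For the right-hand side, I would use that $J=A\cdot A^{\sgn}$ together with $A=\C\oplus\mm$ to write $J=A^{\sgn}+\mm J$, so the composite $\phi\colon A^{\sgn}\hookrightarrow J\twoheadrightarrow J/\mm J$ is surjective; since $\phi$ is $S_n$-equivariant for the $\sgn$-action on the source, the target $J/\mm J$ is a quotient of a $\sgn$-isotypic module, hence itself $\sgn$-isotypic. It then remains to compute $\ker\phi=A^{\sgn}\cap\mm J$: the inclusion $\C[\xx,\yy]^{S_n}_+A^{\sgn}\subseteq A^{\sgn}\cap\mm J$ is immediate, and once the reverse inclusion is established we get $J/\mm J\cong A^{\sgn}/(\C[\xx,\yy]^{S_n}_+A^{\sgn})\cong\DR_n^{\sgn}$ by comparing the two presentations, completing the argument.

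The main obstacle is the reverse inclusion $A^{\sgn}\cap\mm J\subseteq\C[\xx,\yy]^{S_n}_+A^{\sgn}$, that is, ruling out ``unexpected'' antisymmetric elements inside $\mm J$. Here I would first observe that $\mm J=\mm\cdot A^{\sgn}$ as $\C$-subspaces (because $\mm A=\mm$), and then decompose $\mm=\bigoplus_\chi\mm^\chi$ into $S_n$-isotypic components. Each $\mm^\chi\cdot A^{\sgn}$ is a homomorphic image of $\mm^\chi\otimes A^{\sgn}$, hence a sum of copies of the irreducible $V_\chi\otimes\C_{\sgn}=V_{\chi\otimes\sgn}$; so it can contribute to the $\sgn$-isotypic part of $\mm J$ only when $\chi\otimes\sgn\cong\sgn$, i.e.\ only for $\chi$ trivial. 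This gives $(\mm J)^{\sgn}=\mm^{\mathrm{triv}}A^{\sgn}=\C[\xx,\yy]^{S_n}_+A^{\sgn}$, which is exactly what is needed. This representation-theoretic step is the only place where anything beyond formal manipulation of isotypic decompositions is used, and it reduces to the elementary fact that $\sgn$ appears in $\chi\otimes\sgn$ precisely when $\chi$ is trivial.
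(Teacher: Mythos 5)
Your proposal is correct and follows essentially the same route as the paper: both proofs realize $\DR_n^{\sgn}$ and $J/\mm J$ as quotients of the space of antisymmetric polynomials $A^{\sgn}$ and identify their respective kernels, which are both shown to equal $\mm^{\mathrm{triv}}A^{\sgn}$ (what the paper calls $L$, the span of products of a positive-degree symmetric polynomial with an antisymmetric one). The only difference is that you phrase the kernel computation abstractly via isotypic decompositions and the observation that $\chi\otimes\sgn\cong\sgn$ forces $\chi$ trivial, whereas the paper makes the same point concretely by applying the $\Alt$ and $\Sym$ operators to explicit generators; the underlying argument is identical.
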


\begin{proof}
Let $L\subset \C[\xx,\yy]$ be the subspace spanned by the products $f(\xx,\yy)g(\xx,\yy)$ where $f$ and $g$ are homogeneous, $f$ has positive degree and symmetric, and $g$ is antisymmetric. Let 
$$
\Alt=\frac{1}{n!}\sum_{\sigma\in S_n}\sgn(\sigma)\sigma,\quad 
\Sym=\frac{1}{n!}\sum_{\sigma\in S_n}\sigma
$$
be the antisymmetrization and symmetrization operators in $\C[S_n]$ respectively.

Observe that any polynomial in $L$ is antisymmetric, and $L\subset \C[\xx,\yy]^{S_n}_{+}\cap \mm J$.  We claim that in fact $L$ is the sign isotypic component of both $\C[\xx,\yy]^{S_n}_{+}$ and $\mm J$. 
Indeed, any element in $\C[\xx,\yy]^{S_n}_{+}$ can be written as $\sum f_ih_i$ where $f_i$ are symmetric polynomials of positive degree. Then $\Alt(\sum f_ih_i)=\sum f_i\Alt(h_i)\in L$.
Similarly, any polynomial in $\mm J$ can be written as 
$\sum g_i h_i$ where $g_i$ are antisymmetric polynomials and $h_i$ are some homogeneous polynomials of positive degree. Then $\Alt(\sum g_ih_i)=\sum g_i\Sym(h_i)\in L$.

Since $J$ is generated by antisymmetric polynomials, we have
$$
J/\mm J=\C[\xx,\yy]^{\sgn}/(\mm J)^{\sgn}=\C[\xx,\yy]^{\sgn}/L=\DR_n^{\sgn}.
$$
\end{proof}

Next, we would like to utilize the bilinear pairing on $\C[\xx,\yy]$ defined by 
$$
\langle f,g\rangle f(\partial_{\xx},\partial_{\yy})g(\xx,\yy)|_{\xx=\yy=0}.
$$
It is well-known that this pairing is symmetric, nondegenerate, $S_n$-invariant and respects the bigrading.  

\begin{definition}
The space of diagonal harmonics is defined by 
$$
\DH_n=\left\{f\in \C[\xx,\yy]\mid g(\partial_{\xx},\partial_{\yy})f=0\ \forall g\in \C[\xx,\yy]^{S_n}_{+}.\right\}
$$
\end{definition}
It is easy to see that $\DH_n$ is the graded dual space to $\DR_n$ with respect to the pairing, and similarly $\DH_n^{\sgn}$ is graded dual to $\DR_n^{\sgn}$.

Next, we define a family of differential operators on $\DR_n$ and $\DH_n$. 

\begin{definition}
We define 
\begin{equation}
\label{eq: def E algebra}
E_k^*=\sum_{i=1}^{n}x_i\partial_{y_i}^k,\ E_k=\sum_{i=1}^{n}y_i^k\partial_{x_i}
\end{equation}
and
\begin{equation}
\label{eq: def F algebra}
F_k^*=\sum_{i=1}^{n}y_i\partial_{x_i}^k,\ F_k=\sum_{i=1}^{n}x_i^k\partial_{y_i}
\end{equation}
\end{definition}
Clearly, $E_k$ and $E_k^*$ (resp. $F_k$ and $F_k^*$) are adjoint with respect to the pairing up to some scalar which we will not need.
Note that $E_k^*$ pairwise commute and commute with $\partial_{y_i}$ (resp. $F_k^*$ commute with $\partial_{x_i}$) for all $i$. It is known that $E_k^*$ and $F_k^*$ preserve the subspace $\DH_n$ while $E_k$ and $F_k$ preserve the ideal $\C[\xx,\yy]^{S_n}_+$ (compare with Lemma \ref{lem: Ek Fk preserve} below) and hence are well defined on $\DR_n$.

\begin{theorem}(\cite[Theorem 4.2]{HaimanInv})
\label{thm: operator conjecture}
Let $$\Delta(\xx)=\prod_{i<j}(x_i-x_j), \Delta(\yy)=\prod_{i<j}(y_i-y_j). $$ Then $\Delta(\xx)$ generates the space of diagonal harmonics $\DH_n$ under the action of $\C[F^*_1,\ldots,F_{n-1}^*,\partial_{x_1},\ldots,\partial_{x_n}]$.
Similarly,
$\Delta(\yy)$ generates $\DH_n$ under the action of $\C[E_1^*,\ldots,E_{n-1}^*,\partial_{y_1},\ldots,\partial_{y_n}]$. 
\end{theorem}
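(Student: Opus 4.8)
My plan is to split the statement into an elementary inclusion and a deep reverse inclusion, the latter coming from the geometry of $\mathrm{Hilb}^n(\C^2)$ as in Haiman's original argument; the assertion about $\Delta(\yy)$ then follows from the $\xx\leftrightarrow\yy$ symmetry. First I would check the easy inclusion $V\subseteq\DH_n$, where $V:=\C[F_1^*,\ldots,F_{n-1}^*,\partial_{x_1},\ldots,\partial_{x_n}]\cdot\Delta(\xx)$. Here $\Delta(\xx)\in\DH_n$ since the Vandermonde is harmonic for the one-set-of-variables coinvariant algebra; $\DH_n$ is stable under each $\partial_{x_i}$ because constant-coefficient operators commute with every $g(\partial_\xx,\partial_\yy)$; and $\DH_n$ is stable under $F_k^*$ because the adjoint $F_k=\sum_i x_i^k\partial_{y_i}$ is $S_n$-equivariant and, by the Leibniz rule, carries $\C[\xx,\yy]^{S_n}_+$ into itself, hence descends to $\DR_n$, so its adjoint preserves $\DH_n$. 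One bookkeeping remark worth recording: every monomial of $\Delta(\xx)$ has degree at most $n-1$ in each $x_i$ and none of the chosen operators can raise an $x_i$-degree, so $\partial_{x_i}^n$ annihilates all of $V$; hence $F_k^*$ acts as $0$ on $V$ for every $k\ge n$, which explains the restriction to $F_1^*,\ldots,F_{n-1}^*$. All the content is therefore the reverse inclusion $\DH_n\subseteq V$; since $\dim\DH_n=(n+1)^{n-1}$ is already known, this is equivalent to $\dim V\ge(n+1)^{n-1}$, i.e.\ to a matching of bigraded Hilbert series.

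For the reverse inclusion I would pass to $H_n=\mathrm{Hilb}^n(\C^2)$ and use Haiman's dictionary. The space $\DH_n$ is identified with the global sections of the Procesi bundle $P=\rho_*\mathcal O_{X_n}$ over the (scheme-theoretic) zero-fiber $Z_n$ of the Hilbert--Chow morphism $H_n\to S^n\C^2$; the one-variable harmonics $\C[\partial_\xx]\Delta(\xx)$ appear exactly as the fiber of $P$ at the monomial-ideal point $(y,x^n)\in Z_n$ --- this fiber being the one-row Garsia--Haiman module, which is the span of the partial derivatives of $\Delta(\xx)$ --- and the operators $F_k^*$ correspond, through the pairing, to geometric polarization operators that spread this fiber over $Z_n$. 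In these terms the operator theorem becomes the surjectivity, along $Z_n$, of an explicit map of coherent sheaves on $H_n$ built from the tautological rank-$n$ bundle $B$, its exterior powers, and the creation operators, onto $P$; that surjectivity is forced by the vanishing $H^{>0}(H_n,P\otimes\mathcal O(m))=0$ for the relevant twists (with $\mathcal O(1)=\det B$ the relatively ample line bundle) together with the Euler-characteristic computation that simultaneously produces the Garsia--Haiman bigraded character of $\DR_n$. The vanishing I would deduce by realizing the relevant sheaves as pushforwards from the isospectral Hilbert scheme $X_n$ and invoking Grauert--Riemenschneider/Kempf-type vanishing.

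The main obstacle, and the single deepest ingredient, is the fact that $X_n$ is Cohen--Macaulay --- indeed normal and Gorenstein with rational singularities --- which is precisely what makes those vanishing theorems available; this rests on Haiman's polygraph theorem together with a substantial body of commutative algebra and birational geometry of the Hilbert scheme, and I would invoke it rather than reprove it. Granting it, the remaining steps --- pinning down the dictionary, the Euler-characteristic bookkeeping, and extracting surjectivity from vanishing --- are comparatively formal though still delicate. An alternative that avoids the explicit geometry would be to take for granted the Garsia--Haiman bigraded character formula for $\DR_n$ (itself a byproduct of the same package) and prove $\DH_n\subseteq V$ by a degree-by-degree dimension count, exhibiting enough linearly independent elements $\varphi(\Delta(\xx))$; I expect this to be at least as hard to organize and it relies on the same deep results, so I would not adopt it as the primary strategy.
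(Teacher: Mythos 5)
This statement is not proved in the paper; it is cited verbatim from Haiman \cite[Theorem 4.2]{HaimanInv}, and the only original content the paper adds is the remark immediately following it, namely that the $\Delta(\yy)$ version follows from the $\Delta(\xx)$ version by the $\xx\leftrightarrow\yy$ symmetry of all the constructions. So there is no ``paper's own proof'' to line up against. What you have written is a plausible high-level roadmap of Haiman's original argument: the easy inclusion $V\subseteq\DH_n$ (with the correct observation that $F_k^*$ preserves $\DH_n$ because its adjoint $F_k$ preserves $\C[\xx,\yy]^{S_n}_+$, and the correct bookkeeping explaining why $k\ge n$ gives nothing), and the hard reverse inclusion via the Procesi bundle over the zero-fiber of the Hilbert--Chow morphism, with the Cohen--Macaulay/Gorenstein property of the isospectral Hilbert scheme as the decisive input making the vanishing theorems work. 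You also correctly flag that the alternative dimension-count route is not really independent, since $\dim\DH_n=(n+1)^{n-1}$ and the bigraded character of $\DR_n$ come out of the very same machinery.

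Two small cautions, neither fatal to a sketch at this level. First, you say ``$\DH_n$ is identified with the global sections of the Procesi bundle over $Z_n$''; strictly it is $\DR_n$ that is realized as $H^0$ of $P$ restricted to $Z_n$, with $\DH_n$ entering as its graded dual under the apolarity pairing, so the dictionary has one more dualization in it than your phrasing suggests. Second, and more importantly for the structure of the paper: since the authors treat Haiman's theorem as a black box, the expectation here is simply the citation together with the one-line symmetry remark, which you state correctly at the outset. Reproducing the geometry of \cite{HaimanInv} is well beyond what the paper attempts or needs, and your own honest hedges (``I would invoke it rather than reprove it'') put your proposal in essentially the same position as the paper, just with a longer preamble describing what is being invoked.
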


\begin{remark}
The paper \cite{HaimanInv} proved only the first statement. However, since all constructions are symmetric in $\xx$ and $\yy$, the second statement follows immediately.
\end{remark}

\begin{remark}
Since $\Delta(\yy)$  and $\Delta(\xx)$ have degree $n-1$ in each individual variable, we get $E_k^*\Delta(\yy)=0$ and $F_k^*\Delta(\xx)=0$ for $k\ge n$. Theorem \ref{thm: operator conjecture} then implies that $E_k^*$ and $F_k^*$ are identically zero on $\DH_n$ for $k\ge n$.
\end{remark}

Note that $E_i^*$ and $F_i^*$ are $S_n$-equivariant, and in particular preserve the subspace $\DH_n^{\sgn}$.

\begin{corollary}
\label{cor: operator conjecture sgn}
The polynomial $\Delta(\xx)$ generates  $\DH_n^{\sgn}$ under the action of $\C[F^*_1,\ldots,F_{n-1}^*]$ and 
$\Delta(\yy)$ generates $\DH_n$ under the action of $\C[E_1^*,\ldots,E_{n-1}^*]$. 
\end{corollary}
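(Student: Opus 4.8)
The plan is to deduce the corollary directly from Theorem~\ref{thm: operator conjecture} by projecting onto the sign isotypic component with the antisymmetrizer $\Alt$. One inclusion is immediate: since each $F_k^*$ is $S_n$-equivariant and $\Delta(\xx)$ is antisymmetric, every element of $\C[F_1^*,\ldots,F_{n-1}^*]\Delta(\xx)$ lies in $\DH_n^{\sgn}$. So the content is the reverse inclusion $\DH_n^{\sgn}\subseteq \C[F_1^*,\ldots,F_{n-1}^*]\Delta(\xx)$, and likewise with $\xx$ and $\yy$ exchanged.

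First I would take $f\in\DH_n^{\sgn}$ and, using Theorem~\ref{thm: operator conjecture}, write $f$ as a $\C$-linear combination of elements of the form $\varphi(F^*)\,g(\partial_{\xx})\,\Delta(\xx)$, where $\varphi$ ranges over monomials in $F_1^*,\ldots,F_{n-1}^*$ and $g$ over monomials in $\partial_{x_1},\ldots,\partial_{x_n}$; here I use that the $\partial_{x_i}$ commute with the $F_k^*$ to collect all the $\partial_{x_i}$'s on the right. Then I would apply $\Alt$: since $f$ is antisymmetric we have $\Alt f=f$, and since each $\varphi(F^*)$ is $S_n$-equivariant it commutes with $\Alt$, so $f=\sum \varphi(F^*)\,\Alt\!\big(g(\partial_{\xx})\Delta(\xx)\big)$.

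The only step with real content, and hence the place where one must be careful, is the following elementary observation. For each term, $g(\partial_{\xx})\Delta(\xx)$ is a polynomial in the $\xx$ variables alone, so $\Alt\!\big(g(\partial_{\xx})\Delta(\xx)\big)$ is an antisymmetric polynomial in the $\xx$ variables, hence a symmetric multiple of $\Delta(\xx)$. But if $\deg g\ge 1$ this polynomial has degree strictly less than $\deg\Delta(\xx)=\binom{n}{2}$, which forces it to vanish, since the minimal degree of a nonzero antisymmetric polynomial in $n$ variables is $\binom{n}{2}$. Hence only the terms with $g$ constant survive, each of which already lies in $\C[F_1^*,\ldots,F_{n-1}^*]\Delta(\xx)$, and therefore so does $f$. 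The statement for $\Delta(\yy)$ and $\C[E_1^*,\ldots,E_{n-1}^*]$ follows by exchanging the roles of $\xx$ and $\yy$. Apart from this degree bound, the proof is pure bookkeeping of the commutations ($S_n$-equivariance of $F_k^*$ and $[F_k^*,\partial_{x_i}]=0$), so I do not anticipate any genuine obstacle.
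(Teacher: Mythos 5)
Your proof is correct and follows essentially the same route as the paper's: write $f\in\DH_n^{\sgn}$ via Theorem~\ref{thm: operator conjecture}, apply $\Alt$, pull $\Alt$ past the $S_n$-equivariant $\varphi(F^*)$, and observe that the remaining piece (which in the paper is written as $\Sym(g_i)(\partial_{\xx})\Delta(\xx)$, the same object as your $\Alt(g(\partial_{\xx})\Delta(\xx))$) must vanish unless $g$ is constant. The only cosmetic difference is that the paper justifies this last vanishing by noting a symmetric differential operator of positive degree kills $\Delta(\xx)$, while you give the underlying degree count ($\deg < \binom{n}{2}$ forces an antisymmetric polynomial to vanish), which is the same observation.
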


\begin{proof}
We prove the first statement, the second is similar. By Theorem \ref{thm: operator conjecture} we can write any element of $\DH_n$ as $f=\sum_i f_i(F^*_1,\ldots,F_{n-1}^*)g_i(\partial_{x_1},\ldots,\partial_{x_n})\Delta(\xx)$. If $\widetilde{g_i}=\Sym(g_i)$ then (since $F_i^*$ are $S_n$-invariant and $\Delta(\xx)$ is anti-invariant)
$$
\Alt(f)=\sum_i f_i(F^*_1,\ldots,F_{n-1}^*)\widetilde{g_i}(\partial_{x_1},\ldots,\partial_{x_n})\Delta(\xx).
$$
But $\widetilde{g_i}(\partial_{x_1},\ldots,\partial_{x_n})\Delta(\xx)=0$ for a symmetric polynomial $\widetilde{g_i}$ unless $\widetilde{g_i}$ is a constant, and the result follows.
\end{proof}

\begin{lemma}
\label{lem: Ek Fk preserve}
The operators $E_k$ and $F_k$ preserve the ideals $J$ and $\mm J$. 
\end{lemma}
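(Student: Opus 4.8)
\emph{Proof proposal.} The plan is to reduce everything to $F_k$ and then argue by the Leibniz rule. The automorphism of $\C[\xx,\yy]$ exchanging $x_i\leftrightarrow y_i$ commutes with the $S_n$-action, hence fixes the subspace of antisymmetric polynomials and therefore the ideals $J$ and $\mm J$; since it also swaps $E_k$ and $F_k$, it suffices to prove the claim for $F_k$. Two basic observations about $F_k=\sum_i x_i^k\partial_{y_i}$ drive the argument: (i) it is a derivation of $\C[\xx,\yy]$, being a sum of the derivations $x_i^k\partial_{y_i}$; (ii) it is $S_n$-equivariant (conjugating by $\sigma\in S_n$ merely reindexes the summands), so it sends antisymmetric polynomials to antisymmetric polynomials.

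First I would prove $F_k(J)\subseteq J$. Write an arbitrary element of $J$ as $\sum_j g_j p_j$ with $g_j$ antisymmetric and $p_j\in\C[\xx,\yy]$. By the Leibniz rule $F_k\bigl(\sum_j g_j p_j\bigr)=\sum_j (F_k g_j)p_j+\sum_j g_j(F_k p_j)$. In the first sum $F_k g_j$ is again antisymmetric by observation (ii), so $(F_k g_j)p_j\in J$; in the second sum $g_j\in J$ and $J$ is an ideal, so $g_j(F_k p_j)\in J$. Hence $F_k(J)\subseteq J$, and likewise $E_k(J)\subseteq J$.

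Next, for $\mm J$ I would use $\mm J=\sum_i x_i J+\sum_i y_i J$ and take a general element $\zeta=\sum_i x_i g_i+\sum_i y_i h_i$ with $g_i,h_i\in J$. Since $F_k x_i=0$ and $F_k y_i=x_i^k$, the Leibniz rule gives $F_k(x_i g_i)=x_i(F_k g_i)$ and $F_k(y_i h_i)=x_i^k h_i+y_i(F_k h_i)$. The terms $x_i(F_k g_i)$ and $y_i(F_k h_i)$ lie in $\mm J$ because $F_k g_i,F_k h_i\in J$ by the previous paragraph; the term $x_i^k h_i$ lies in $\mm J$ because $k\ge 1$, so $x_i^k h_i=x_i\cdot(x_i^{k-1}h_i)$ with $x_i^{k-1}h_i\in J$. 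Summing, $F_k\zeta\in\mm J$, and by $\xx\leftrightarrow\yy$ symmetry $E_k(\mm J)\subseteq\mm J$ as well.

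The argument is essentially formal; the one point to watch — the ``hard part'', such as it is — is the stray term $x_i^k h_i$ produced when $F_k$ hits the variable $y_i$. This is what forces $k\ge 1$ (equivalently $x_i^k\in\mm$), and it is the reason one works with $\mm J$ rather than with $J$ twisted by a single variable. No deeper input (such as the structure of $J$ as a module over the invariants, or Haiman's results) is needed.
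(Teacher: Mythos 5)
Your proof is correct and follows essentially the same route as the paper: the key point in both is that $E_k,F_k$ are first‑order differential operators (derivations), so Leibniz applied to a decomposition of $J$ (resp.\ $\mm J$) with one antisymmetric factor does all the work. The only cosmetic difference is in the $\mm J$ step, where you track the single factor $x_i$ or $y_i$ pulled out of $\mm$, while the paper instead writes elements of $\mm J$ as $\sum f_i g_i$ with $f_i$ homogeneous of positive degree and $g_i$ antisymmetric and invokes the degree count $\deg E_k(f_i)=d_i+k-1>0$; both bookkeeping schemes hinge on $k\ge 1$, which you flag explicitly.
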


\begin{proof}
Any element of $J$ can be written as $\sum_i f_ig_i$ where $f_i$ are arbitrary and $g_i$ are antisymmetric. Then
\begin{equation}
\label{eq: Ek Leibniz}
E_k\left(\sum_i f_ig_i\right)=\sum_i E_k(f_i)g_i+\sum_i f_iE_k(g_i).
\end{equation}
Now $E_k(g_i)$ are antisymmetric, so all the terms in \eqref{eq: Ek Leibniz} are in $J$ and the first statement follows. Note that here we used the fact that $E_k$ are first order differential operators.

For the second statement, assume that $f_i$ are homogeneous of total degrees $d_i>0$, so that $\sum_i f_ig_i\in \mm J$. Then $E_k(f_i)$ has total degree $k+d_i-1>0$, so all terms in \eqref{eq: Ek Leibniz} are in $\mm J$.
\end{proof}

As a consequence, the operators $E_k$ and $F_k$ are well defined on the quotient $J/\mm J$, and it is clear from definitions that the isomorphism from Lemma \ref{lem: catalan} agrees with their respective actions.
The following result is a dual statement to Corollary \ref{cor: operator conjecture sgn}. It shows that $\Delta(\xx)$ and $\Delta(\yy)$ ``co-generate" the space $\DR_n^{\sgn}$ under the action of $E_k$ and $F_k$.

\begin{lemma}
\label{lem: cogenerate}
Let $f\in \DR_n^{\sgn}\simeq J/\mm J$ be a nonzero homogeneous element. Then there exists a polynomial $\varphi(F_1,\ldots,F_{n-1})$ such that  $\varphi(F_1,\ldots,F_{n-1})f$ is a nonzero multiple of $\Delta(\xx)$. Similarly, there exists a polynomial $\psi(E_1,\ldots,E_{n-1})$ such that  $\psi(E_1,\ldots,E_{n-1})f$ is a nonzero multiple of $\Delta(\yy)$.
\end{lemma}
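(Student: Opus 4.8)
The plan is to derive this by dualizing Corollary \ref{cor: operator conjecture sgn} through the pairing between $\DR_n^{\sgn}$ and $\DH_n^{\sgn}$, using that $F_k$ is adjoint to $F_k^*$ (and $E_k$ to $E_k^*$) up to a nonzero scalar. I will only treat the first assertion; the second follows verbatim after interchanging $\xx$ and $\yy$.

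First I would record that the bidegree $(\binom{n}{2},0)$ component of $\DR_n^{\sgn}$ is one-dimensional and spanned by the image of $\Delta(\xx)$. Since $\DH_n^{\sgn}$ is graded dual to $\DR_n^{\sgn}$, it is enough to check this for $\DH_n^{\sgn}$. An element of $\DH_n$ of $\yy$-degree $0$ is a polynomial in $\xx$ alone that is annihilated by every $g(\partial_{\xx})$ with $g\in\C[\xx]^{S_n}_{+}$, hence is an $S_n$-harmonic in the $x$-variables; the sign isotypic part of such harmonics is one-dimensional, spanned by $\Delta(\xx)$, which sits in degree $\binom{n}{2}$. The same argument shows $\Delta(\xx)\neq 0$ in $\DR_n^{\sgn}$.

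Next, let $f\in\DR_n^{\sgn}$ be nonzero and homogeneous of bidegree $(a,b)$. By nondegeneracy of the pairing there is a homogeneous $h\in\DH_n^{\sgn}$ of the same bidegree $(a,b)$ with $\langle f,h\rangle\neq 0$. By Corollary \ref{cor: operator conjecture sgn} we may write $h=\psi(F_1^*,\ldots,F_{n-1}^*)\Delta(\xx)$ for a polynomial $\psi$; passing to the homogeneous part of $\psi$ that carries $\Delta(\xx)$ into bidegree $(a,b)$ (which still reproduces $h$, as $h$ is homogeneous), we may assume every monomial of $\psi$ is a product of exactly $b$ operators $F_k^*$ whose indices sum to $\binom{n}{2}-a$; in particular $a\le\binom{n}{2}$, for otherwise $h=0$. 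Let $\varphi(F_1,\ldots,F_{n-1})$ be obtained from $\psi$ by reversing the order of factors in each monomial, replacing each $F_k^*$ by $F_k$, and rescaling that monomial's coefficient by the nonzero scalar coming from the adjunction. Iterating the adjunction relation then gives
$$
\langle \varphi(F_1,\ldots,F_{n-1})\,f,\ \Delta(\xx)\rangle = \langle f,\ \psi(F_1^*,\ldots,F_{n-1}^*)\Delta(\xx)\rangle = \langle f,h\rangle \neq 0 .
$$
Since each $F_k$ has bidegree $(k,-1)$ and the monomials of $\varphi$ have $b$ factors summing to $\binom{n}{2}-a$, the element $\varphi(F_1,\ldots,F_{n-1})f$ is homogeneous of bidegree $(\binom{n}{2},0)$, and it is nonzero by the displayed inequality; by the first step it is therefore a nonzero multiple of $\Delta(\xx)$, as required. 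The statement for $E_k$ and $\Delta(\yy)$ is the same argument with $\xx\leftrightarrow\yy$, $F_k\leftrightarrow E_k$, $F_k^*\leftrightarrow E_k^*$.

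I do not expect a real obstacle here: the substantive input is Haiman's operator theorem, already packaged in Corollary \ref{cor: operator conjecture sgn}, and the rest is a finite-dimensional duality. The only things that need a bit of care are the bidegree bookkeeping when passing to the homogeneous part of $\psi$, the harmless scalars produced by the adjunction, and verifying that $\Delta(\xx)$ genuinely survives in $\DR_n^{\sgn}$ so that its bidegree component is one-dimensional.
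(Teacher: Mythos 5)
Your proof is correct and follows essentially the same route as the paper's: both hinge on the one-dimensionality of the bidegree $(\binom{n}{2},0)$ component spanned by $\Delta(\xx)$, the adjunction between $F_k$ and $F_k^*$, and Corollary \ref{cor: operator conjecture sgn}. The paper phrases the argument as a short proof by contradiction (if $\langle \varphi(F_1,\ldots,F_{n-1})f,\Delta(\xx)\rangle=0$ for all $\varphi$, adjunction forces $f$ to pair trivially with all of $\DH_n^{\sgn}$), whereas you build the same $\varphi$ constructively and add the explicit bidegree bookkeeping, but the underlying idea is identical.
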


\begin{proof}
We prove the first statement, the second is similar. It is well known that homogeneous component of $\DR_n$ of bidegree $q^{n(n-1)/2}t^0$ is one-dimensional and spanned by $\Delta(\xx)$. For the sake of contradiction, assume that for all $\varphi$ the coefficient at $\Delta(x)$ actually vanishes, and $\langle \varphi(F_1,\ldots,F_{n-1})f,\Delta(x)\rangle=0$. Then by adjunction we get
$$
\langle f, \varphi(F_1^*,\ldots,F_{n-1}^*)\Delta(x)\rangle=0
$$
which contradicts Corollary \ref{cor: operator conjecture sgn}.
\end{proof}

We illustrate the above lemmas for $n=3$. The spaces $(\DH_n)^{\sgn}$ and $\DR_n^{\sgn}$ are 5-dimensional, and the bigradings and action of operators $F^*_k,F_k$ is given by Figure \ref{Fig: catalan}. By transposing the figures, one can get the action of $E_k,E^*_k$.

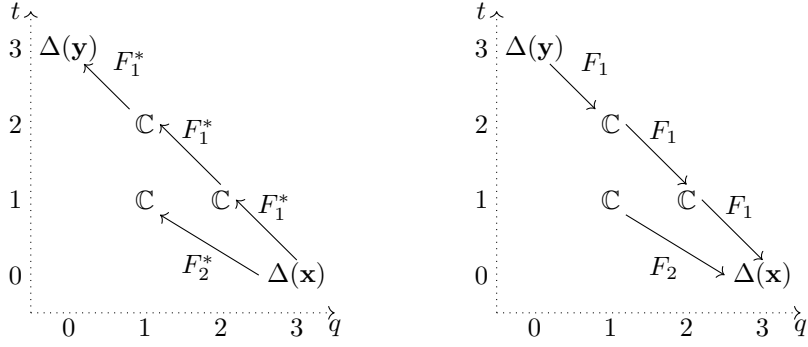
\begin{figure}[ht!]
\begin{tikzpicture}
\draw [dotted,->] (0,0)--(4,0);
\draw [dotted,->] (0,0)--(0,4);
\draw (0.5,3.5) node {$\Delta(\yy)$};
\draw (1.5,2.5) node {$\C$};
\draw (2.5,1.5) node {$\C$};
\draw (3.5,0.5) node {$\Delta(\xx)$};
\draw (1.5,1.5) node {$\C$};

\draw [->] (3,0.5)--(1.7,1.3);
\draw [->] (3.5,0.7)--(2.7,1.5);
\draw [->] (2.5,1.7)--(1.7,2.5);
\draw [->] (1.3,2.7)--(0.7,3.3);
\draw (2.2,0.6) node {$F_2^*$};
\draw (3.2,1.4) node {$F_1^*$};
\draw (2.2,2.4) node {$F_1^*$};
\draw (1.3,3.3) node {$F_1^*$};

\draw (4,-0.2) node {$q$};
\draw (-0.2,4) node {$t$}; 
\draw (0.5,-0.2) node {$0$};
\draw (1.5,-0.2) node {$1$};
\draw (2.5,-0.2) node {$2$};
\draw (3.5,-0.2) node {$3$};
\draw (-0.2,0.5) node {$0$};
\draw (-0.2,1.5) node {$1$};
\draw (-0.2,2.5) node {$2$};
\draw (-0.2,3.5) node {$3$};
\end{tikzpicture}
\qquad
\qquad
\begin{tikzpicture}
\draw [dotted,->] (0,0)--(4,0);
\draw [dotted,->] (0,0)--(0,4);
\draw (0.5,3.5) node {$\Delta(\yy)$};
\draw (1.5,2.5) node {$\C$};
\draw (2.5,1.5) node {$\C$};
\draw (3.5,0.5) node {$\Delta(\xx)$};
\draw (1.5,1.5) node {$\C$};

\draw [<-] (3,0.5)--(1.7,1.3);
\draw [<-] (3.5,0.7)--(2.7,1.5);
\draw [<-] (2.5,1.7)--(1.7,2.5);
\draw [<-] (1.3,2.7)--(0.7,3.3);
\draw (2.2,0.6) node {$F_2$};
\draw (3.2,1.4) node {$F_1$};
\draw (2.2,2.4) node {$F_1$};
\draw (1.3,3.3) node {$F_1$};

\draw (4,-0.2) node {$q$};
\draw (-0.2,4) node {$t$}; 
\draw (0.5,-0.2) node {$0$};
\draw (1.5,-0.2) node {$1$};
\draw (2.5,-0.2) node {$2$};
\draw (3.5,-0.2) node {$3$};
\draw (-0.2,0.5) node {$0$};
\draw (-0.2,1.5) node {$1$};
\draw (-0.2,2.5) node {$2$};
\draw (-0.2,3.5) node {$3$};

\end{tikzpicture}
\caption{Action of $F_k^*$ on $\DH_3^{\sgn}$ (left) and action of $F_k$ on $\DR_3^{\sgn}$ (right)}
\label{Fig: catalan}
\end{figure}

\subsection{Hamiltonian vector fields}

The algebra $\C[x,y]$ of polynomials in two variables has a Poisson bracket 
$$
\{f,g\}=\frac{\partial f}{\partial x}\frac{\partial g}{\partial y}-\frac{\partial f}{\partial y}\frac{\partial g}{\partial x}
$$
which satisfies
$$
\{f,g\}=-\{g,f\},\{f,gh\}=\{f,g\}h+g\{f,h\} 
$$
and the Jacobi identity
$$
\{f,\{g,h\}\}=\{\{f,g\},h\}+\{g,\{f,h\}\}.
$$
Given a polynomial $H\in \C[x,y]$, one can define the corresponding Hamiltonian vector field
$$
v_H=\{H,-\}=\frac{\partial H}{\partial x}\partial_{y}-\frac{\partial H}{\partial y}\partial_{x}.
$$
The space of all Hamiltonian vector fields is denoted by $\mathcal{H}_2$. The Jacobi identity implies
$$
[v_H,v_{H'}]=v_{\{H,H'\}},
$$
so $\mathcal{H}_2$ is closed under the commutator and forms a Lie algebra. It has a basis $v_{a,b}=v_{x^ay^b}$ ($a+b\ge 1$) and the commutation relations can be written explicitly as
$$
[v_{a,b},v_{a',b'}]=(ab'-a'b)v_{a+a'-1,b+b'-1}.
$$
Here and below we assume $v_{0,0}=0$.
We will consider the following  Lie subalgebra: 
$$
\mathcal{H}_2^{\ge 2}=\langle v_{a,b}\mid a+b\ge 2\rangle\subset \mathcal{H}_2.
$$
Also note that 
$$
\langle v_{2,0},v_{1,1},v_{0,2}\rangle\simeq \mathfrak{sl}_2\subset \mathcal{H}_2.
$$
Furthermore,
$$
[v_{a,0},v_{0,b}]=abv_{a-1,b-1},
$$
and the elements $v_{a,0},v_{0,b}$ for $a,b\ge 1$ already generate $\mathcal{H}_2$ while $v_{a,0},v_{0,b}$ for $a,b\ge 2$ generate $\mathcal{H}_2^{\ge 2}$.  
The Lie algebra $\mathcal{H}_2$ acts on $\C[\xx,\yy]$ by 
$$
v_{a,b}\mapsto \sum_{i=1}^{n}\left(ax_i^{a-1}y_i^b\partial_{y_i}-bx_i^ay_i^{b-1}\partial_{x_i}\right).
$$
In particular, the operators $E_k,F_k$ from \eqref{eq: def E algebra},\eqref{eq: def F algebra} appear as
$$
E_k=-\frac{1}{k+1}v_{0,k+1},\ F_k=\frac{1}{k+1}v_{k+1,0}
$$
and thus generate an action of $\mathcal{H}_2^{\ge 2}$.

\begin{definition}
We say that $M$ is a module over $\C[x,y]$ with a compatible action of $\mathcal{H}_2$ if for all $m\in M, f\in \C[x,y]$ and $v_H\in \mathcal{H}_2$ we have
$$
v_H(fm)=v_H(f)m+fv_H(m).
$$
\end{definition}

The following is well known but we include the proof for the reader's convenience.

\begin{lemma}
\label{lem: reduced}
Suppose $M$ is a finitely generated graded module over $\C[x,y]$ with a compatible action of $\mathcal{H}_2$. Then $M=\overline{M}[x,y]$ where $\overline{M}$ is a finite-dimensional representation of $\mathcal{H}_2^{\ge 2}$.
\end{lemma}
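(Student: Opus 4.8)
The plan is to show that the $\mathcal{H}_2^{\ge 2}$-action lets us ``straighten'' $M$ into a free $\C[x,y]$-module generated by an $\mathcal{H}_2^{\ge 2}$-stable finite-dimensional subspace. First I would isolate the role played by the diagonal operators $v_{2,0}=2x\partial_y$ and $v_{0,2}=-2y\partial_x$; together with $v_{1,1}$ they span a copy of $\mathfrak{sl}_2$, but more importantly, the compatibility rule gives $v_{2,0}(x)=0$, $v_{2,0}(y)=2x$ and $v_{0,2}(x)=-2y$, $v_{0,2}(y)=0$. Pick a minimal set of homogeneous generators $m_1,\dots,m_r$ of $M$ over $\C[x,y]$, so $r=\dim_{\C}(M/(x,y)M)$, and set $\overline{M}=M/(x,y)M$. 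The compatibility rule shows that $(x,y)M$ is preserved by every $v_H$ with $H$ of degree $\ge 2$ (since then $v_H(f)$ lies in $(x,y)$ whenever $f$ does, as $v_H$ lowers degree by at most $\deg H - 2 \ge 0$ and $v_H$ of a degree-$\ge 1$ polynomial has degree $\ge 1$), so $\overline{M}$ inherits an action of $\mathcal{H}_2^{\ge 2}$, and it is finite-dimensional by Nakayama since $M$ is finitely generated and graded.

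The heart of the argument is to produce a lift: a finite-dimensional graded subspace $V\subset M$ mapping isomorphically to $\overline{M}$, stable under $\mathcal{H}_2^{\ge 2}$, and generating $M$ freely. I would build $V$ degree by degree. Start in the lowest degree $d_0$ where $M$ is nonzero; there $(x,y)M$ vanishes, so $M_{d_0}$ injects into $\overline{M}$ and we take $V_{d_0}=M_{d_0}$. Inductively, suppose we have chosen graded pieces $V_{<d}$ spanning a subspace on which the $\mathcal{H}_2^{\ge 2}$-action is closed (using that $v_H$ for $\deg H\ge 2$ does not raise degree) and which projects isomorphically onto $\overline{M}_{<d}$. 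In degree $d$ we need a complement to $(x,y)M \cap M_d = xM_{d-1}+yM_{d-1}$ inside $M_d$; the choices of generators give such a complement of dimension $\dim\overline{M}_d$, and I would adjust it to be $v_{2,0}$- and $v_{0,2}$-stable by the following observation: because $v_{2,0}$ and $v_{0,2}$ act \emph{locally nilpotently} (they strictly decrease the bidegree in the $y$- and $x$-direction respectively, on the finite-dimensional graded piece $\overline M$), the representation theory of the $\mathfrak{sl}_2$ they span lets us choose a complement that is a sum of $\mathfrak{sl}_2$-isotypic lifts; since $v_{2,0}$ and $v_{0,2}$ for exponents $\ge 2$ generate $\mathcal{H}_2^{\ge 2}$, stability under these two plus the lower-degree inductive data forces stability under all of $\mathcal{H}_2^{\ge 2}$. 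Concretely it is cleanest to lift the whole representation $\overline{M}$ at once: decompose $\overline{M}$ into $\mathfrak{sl}_2$-irreducibles, lift lowest-weight vectors arbitrarily (in the correct degree, mod $(x,y)M$), and generate the rest by applying $v_{2,0}$; one checks the $v_{0,2}$-action on these lifts agrees with the one on $\overline{M}$ modulo $(x,y)M$, and then an induction on degree corrects the discrepancy, which lives in $(x,y)M$ and can be absorbed.

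Granting such a $V$, the final step is routine: the multiplication map $V\otimes_{\C}\C[x,y]\to M$ is surjective by Nakayama (its image contains a generating set and $V$ surjects onto $M/(x,y)M$), and it is injective because both sides have the same Hilbert series — $\dim_{\C} M_d = \sum_{e}\dim V_e\cdot\dim\C[x,y]_{d-e}$ follows from the graded short exact sequences $0\to (x,y)M\cap M_d\to M_d\to \overline M_d\to 0$ and an induction showing $M$ is free (equivalently, that no relations appear, which one sees by comparing the minimal number of generators with the rank). Hence $M\cong\overline{M}[x,y]$ as graded $\C[x,y]$-modules, and by construction this isomorphism is $\mathcal{H}_2^{\ge 2}$-equivariant, where $\mathcal{H}_2^{\ge 2}$ acts on $\overline{M}[x,y]$ via the given action on $\overline M$, the standard action on $\C[x,y]$, and the Leibniz rule. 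The main obstacle is the middle step: arranging the lift $V$ to be genuinely $\mathcal{H}_2^{\ge 2}$-stable rather than merely stable modulo $(x,y)M$, which is exactly where local nilpotence of $v_{2,0},v_{0,2}$ and the resulting $\mathfrak{sl}_2$-semisimplicity of each graded piece of $\overline M$ are essential.
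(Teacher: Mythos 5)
There is a genuine gap in your proposal: you never use the degree-one Hamiltonian vector fields $v_{1,0}=\partial_y$ and $v_{0,1}=-\partial_x$, even though the lemma assumes a compatible action of all of $\mathcal{H}_2$ and not merely of $\mathcal{H}_2^{\ge 2}$. All of the machinery you invoke — Nakayama, the quotient $\overline{M}=M/(x,y)M$, local nilpotence of $v_{2,0},v_{0,2}$, stability of $(x,y)M$ under degree-$\ge 2$ Hamiltonians — is available for $\mathcal{H}_2^{\ge 2}$ alone, yet the conclusion is false at that level of generality. For instance, $M=\C[x,y]/(x^2,xy,y^2)$ is a finitely generated graded $\C[x,y]$-module with a compatible $\mathcal{H}_2^{\ge 2}$-action inherited from $\C[x,y]$ (one checks $v_{2,0},v_{1,1},v_{0,2}$ all preserve the ideal $(x^2,xy,y^2)$), but it is $3$-dimensional and certainly not free. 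The action does not extend to $v_{1,0}=\partial_y$, since $\partial_y(xy)=x\notin(x^2,xy,y^2)$; this is exactly what rescues the lemma.

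The same omission makes your ``injectivity'' step circular. You claim the Hilbert series of $M$ and $V\otimes\C[x,y]$ agree by ``an induction showing $M$ is free,'' but freeness is the heart of the matter and cannot be assumed. The paper proves it directly using $v_{1,0},v_{0,1}$: one sets $\overline{M}=\Ker(v_{1,0})\cap\Ker(v_{0,1})$ (a subspace, not a quotient), shows the natural map $\overline{M}[x,y]\to M$ is injective by applying, to any purported relation $\sum f_i m_i=0$, the operator $f_1(-v_{0,1},v_{1,0})$ where $f_1$ has maximal degree (this ``integrates back'' to a linear relation among the $m_i$, a contradiction), and shows surjectivity by taking a minimal-degree $m\notin\overline{M}[x,y]$ and using $v_{1,0}(m),\,-v_{0,1}(m)\in\overline{M}[x,y]$ together with $[v_{1,0},v_{0,1}]=0$ to correct $m$ into the kernel of both, again a contradiction. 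Your construction of a lift $V$ stable under $v_{2,0},v_{0,2}$ is aimed at surjectivity and at the $\mathcal{H}_2^{\ge 2}$-structure on $\overline{M}$, but it supplies no mechanism for injectivity, and it is also considerably harder to carry out than the paper's kernel-based definition, which produces a canonical complement with no choices to correct.
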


\begin{proof}
Note that $v_{1,0}=\partial_y$ and $v_{0,1}=-\partial_x$, and $[v_{1,0},v_{0,1}]=0$. Define $\overline{M}=\Ker(v_{1,0})\cap \Ker(v_{0,1})$.

We claim that the map $\overline{M}[x,y]\to M$ is injective. Indeed, suppose $\sum f_i(x,y)m_i=0$ for some homogeneous $f_i\in \C[x,y]$ and linearly independent $m_i\in \overline{M}$. Without loss of generality $f_1$ has largest degree among $f_i$. Then applying $f_1(-v_{0,1},v_{1,0})$ we get $\sum \lambda_im_i=0$ where $\lambda_i\in \C$ and $\lambda_1\neq 0$, contradiction.

Now let $m\in M$ be the element of minimal degree not in $\overline{M}[x,y]$. Let $m_x=-v_{0,1}(m)$, $m_y=v_{1,0}(m)$, by our assumption these are contained in $\overline{M}[x,y]$. Furthermore, $v_{1,0}(m_x)=-v_{0,1}(m_y)$, which implies (since the action of $v_{1,0}$ and $v_{0,1}$ on $\overline{M}[x,y]$ is standard)  that $m_x=ax^{a-1}y^bm_0$ and $m_y=bx^ay^{b-1}m_0$ for some $m_0\in \overline{M}$. Then $m-x^ay^bm_0\in \Ker(v_{1,0})\cap \Ker(v_{0,1})$, contradiction.
This completes the proof of the isomorphism $\overline{M}[x,y]$.

Next, we claim that the $\C[x,y]$-submodule $(x,y)\overline{M}\subset \overline{M}[x,y]$ is invariant under the action of $\mathcal{H}_2^{\ge 2}$. Indeed, for $i+j\ge 1$ and $a+b\ge 2$ we have
$$
v_{a,b}(x^iy^jm)=(aj-bi)x^{a+i-1}y^{b+j-1}m+x^iy^jv_{a,b}(m).
$$
Since $(a+i-1)+(b+j-1)\ge 2+1-2=1$, both terms are contained in $(x,y)\overline{M}$. 

Therefore we can define the action of $\mathcal{H}_2^{\ge 2}$ on $\overline{M}$ by writing $\overline{M}=\overline{M}[x,y]/(x,y)\overline{M}$.
\end{proof}

\begin{remark}
The action of $\mathcal{H}_2^{\ge 2}$ on $\overline{M}$ does not extend to the action of $\mathcal{H}_2$. Indeed, $v_{1,0}$ acts by zero on $\overline{M}$ while $[v_{1,0},v_{a,b}]=bv_{a,b-1}$ in $\mathcal{H}_2$.
This is also clear from the proof since $v_{1,0}$ does not preserve the submodule  $(x,y)\overline{M}$.
\end{remark}

\section{Homology of torus links}
\label{sec:links}

\subsection{Full twist revisited}

Let us recall the description of the $y$-fied homology of the full twist following \cite{GH}. Let $\FT_n=(\sigma_1\cdots\sigma_{n-1})^{n}$ be the full twist braid on $n$ strands and $\bFT_n=T^y(\FT_n)$ the corresponding $y$-ified Rouquier complex. The closure of $\FT_n$ is the torus link $T(n,n)$.

\begin{theorem}(\cite{GH})
\label{thm: GH}
There is a chain map $\Psi:\bFT_n\to \one_n^y$ which induces an injective map $\HY(\Psi):\HY(\FT_n)\to \HY(\one_n)=\C[\xx,\yy]\otimes \wedge(\ttheta)$. The image of $\HY(\Psi)$ is the ideal $\cJ\subset \C[\xx,\yy]\otimes \wedge(\ttheta)$ generated by antisymmetric polynomials.
Furthermore, we have an isomorphism of $\C[\xx]$-modules
$$
\HHH(\FT_n)=\HY(\FT_n)/(\yy)\HY(\FT_n)\simeq \cJ/(\yy)\cJ.
$$
\end{theorem}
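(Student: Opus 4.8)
\emph{Setup and the map $\Psi$.} The plan is to treat the three assertions in turn. Since $\one_n^y=R[\yy]$, the target is already understood: $\HY(\one_n)=\HH(R[\yy])=\C[\xx,\yy]\otimes\wedge(\ttheta)$. I would construct $\Psi$ recursively in $n$ from the standard factorization $\FT_n=\FT_{n-1}\cdot A_n$, where $A_n=(\sigma_{n-1}\cdots\sigma_1)(\sigma_1\cdots\sigma_{n-1})$ is the pure braid dragging the last strand around the other $n-1$ (it has $2(n-1)$ crossings, consistent with $(n-1)(n-2)+2(n-1)=n(n-1)$). The crossings of $A_n$ contribute the bimodule maps $b_i,b_i^*$ and the maps of the associated curved Koszul complexes $K_{ij}^y$, and composing these with $\Psi_{n-1}\otimes\mathrm{id}$ produces $\Psi_n:\bFT_n\to\one_n^y$; alternatively one simply takes this chain-level construction from \cite{GH}. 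Applying $\HH$ termwise turns the curved differential into an honest one — the curvature \eqref{eq: curvature} vanishes after $\HH$ because the underlying permutation of $\FT_n$ is trivial — and passing to homology yields $\HY(\Psi):\HY(\FT_n)\to\C[\xx,\yy]\otimes\wedge(\ttheta)$. One checks that $\Psi$ is $S_n$-equivariant and well defined up to homotopy using the braid relations for $T_i^y$.

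\emph{Identifying the image.} Here I would argue by induction on $n$, the base case $n=1$ being $\HY(\FT_1)=\C[x_1,y_1]\otimes\wedge(\theta_1)=\cJ$. Resolving the $2(n-1)$ crossings of $A_n$ one at a time via the skein long exact sequence (the Corollary following Proposition \ref{prop: skein}) reduces $\HY(\FT_n)$ to $\HY(\FT_{n-1})$ with several curved Koszul factors $K_{ij}^y$ inserted, all of which are controlled by the inductive hypothesis. The one place where $y$-ification does essential work is that the connecting homomorphisms in these long exact sequences are forced to be as nondegenerate as possible — injective, resp.\ surjective, in the expected internal degrees — so that no spurious homology survives and $\HY(\FT_n)$ has the predicted bigraded dimension; compatibility of the maps $\Psi_k$ with the skein triangles lets one track the image at each stage. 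One then checks that the image is exactly $\cJ$: the distinguished generator $\delta_{\FT_n}$ maps to an antisymmetric polynomial (up to a monomial, to the Vandermonde $\Delta(\xx)$ or $\Delta(\yy)$), and since $\Psi$ is $S_n$-equivariant the image is an $S_n$-stable ideal contained in $\cJ$; conversely the $\C[\xx,\yy]\otimes\wedge(\ttheta)$-submodule generated by the image of $\delta_{\FT_n}$ — or, using Theorem \ref{thm: Fk summary}, the submodule generated from it under the $\CF_k$-action — already exhausts every antisymmetric polynomial. To exclude that the image is a proper subideal of $\cJ$ (equivalently, that $\HY(\Psi)$ has a kernel), compare bigraded Poincaré series: the series of $\cJ$ is computable from Haiman's description of the ideal $J$, and it must coincide with the series of $\HY(\FT_n)$ produced by the induction.

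\emph{Reduction modulo $(\yy)$, and the main obstacle.} Setting $\yy=0$ turns the curved complex $T^y(\FT_n)$ into the ordinary Rouquier complex $T(\FT_n)$, so there is a natural $\C[\xx]$-linear comparison map $\HY(\FT_n)/(\yy)\HY(\FT_n)\to\HHH(\FT_n)$. Using $\HY(\FT_n)\simeq\cJ$ the left-hand side is $\cJ/(\yy)\cJ$, whose bigraded dimension one computes directly, and matching it against the known bigraded dimension of $\HHH(\FT_n)$ forces the comparison map to be an isomorphism. The crux of the whole argument is the inductive step: showing that the connecting maps in the iterated skein triangles degenerate exactly as predicted, so that $\HY(\FT_n)$ has the expected size and $\HY(\Psi)$ is injective with image precisely $\cJ$ — rather than merely containing, or being contained in, $\cJ$. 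This is where all the force of $y$-ification enters, together with a careful bookkeeping of the triple grading (or, equivalently, an appeal to the known Hilbert series of $\cJ$) to pin the answer down.
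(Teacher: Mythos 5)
This theorem is attributed to \cite{GH} and the paper does not prove it, so there is no in-paper argument to compare against; the paper only invokes pieces of the construction later (e.g.\ in the proof of Theorem~\ref{thm: tautol FT}, where it recalls that $\Psi$ is a composition of $\binom{n}{2}$ maps built from Proposition~\ref{prop: skein}, and in Lemma~\ref{lem: delta}, which cites \cite{BGHW}). Your outline does correctly identify the broad strategy of \cite{GH} — recursive resolution of the full twist via skein triangles, with the curvature \eqref{eq: curvature} vanishing after $\HH$ because $\FT_n$ is a pure braid — so the general shape is right.

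However, as a proof the sketch has several genuine gaps, the largest of which you acknowledge yourself but leave unaddressed. (1)~The entire technical content of the theorem is the degeneration statement: that the connecting maps in the iterated skein long exact sequences vanish in the relevant degrees, forcing $\HY(\FT_n)$ to be as small as possible. You flag this as ``the crux'' but give no argument; this is precisely the parity/freeness mechanism that \cite{GH} establishes, and asserting it is assuming the theorem. (2)~The Hilbert series comparison you invoke to exclude a kernel is circular, since the bigraded dimension of $\HY(\FT_n)$ is exactly what the theorem determines. (3)~The image of $\HY(\Psi)$ is a priori only a $\C[\xx,\yy]$-submodule, not an ideal of $\C[\xx,\yy]\otimes\wedge(\ttheta)$; closure under multiplication by the $\theta_i$ needs its own argument, and the claimed $S_n$-equivariance of the chain map $\Psi$ is not free even though both sides do carry $S_n$-actions. (4)~The final step $\HHH(\FT_n)\simeq\HY(\FT_n)/(\yy)\HY(\FT_n)$ is not automatic from ``set $\yy=0$'': specializing a curved complex produces a spectral sequence in general, and collapsing it requires knowing $\HY(\FT_n)$ is free over $\C[\yy]$ — again a nontrivial output of \cite{GH}, not an input.
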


In particular, in $a$-degree zero we have $$
\HY^0(\FT_n)\simeq J,\quad \HHH^0(\FT_n)\simeq J/(\yy)J.
$$
The following result computes the action of tautological classes $\CF_k$ and $\CE_k$ on $\HY^0(\FT_n)$.

\begin{theorem}
\label{thm: tautol FT}
The action of $\CE_k,\CF_k$ on $\HY^0(\FT_n)$ is given by \eqref{eq: def E algebra} and \eqref{eq: def F algebra} under the isomorphism $\HY^0(\FT_n)\simeq J$. The action of the involution $\Phi$ on $\HY^0(\FT_n)$ corresponds to exchanging $\xx$ with $\yy$ in $J$.
\end{theorem}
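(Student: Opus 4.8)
The plan is to realize $\HY^0(\FT_n)$ as the ideal $J\subset\C[\xx,\yy]$ of antisymmetric polynomials, using the isomorphism of $\C[\xx,\yy]$-modules provided by Theorem~\ref{thm: GH}, and then pin down the operators using only the commutation relations of Theorem~\ref{thm: Fk summary} together with a rigidity property of $J$. The rigidity I will use is that every $\C[\xx,\yy]$-linear endomorphism of $J$ is multiplication by an element of $\C[\xx,\yy]$: indeed $J$ is torsion-free of rank one, so such an endomorphism extends to multiplication by some $r\in\mathrm{Frac}(\C[\xx,\yy])$, and $rJ\subseteq J$ forces the denominator $q$ of $r$ to divide both $\Delta(\xx)$ and $\Delta(\yy)$, hence to be a unit since $\Delta(\xx)$ and $\Delta(\yy)$ are coprime in the UFD $\C[\xx,\yy]$. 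In particular the invertible $\C[\xx,\yy]$-linear endomorphisms of $J$ are exactly the nonzero scalars.

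For $\CF_k$: the operator $F_k=\sum_i x_i^k\partial_{y_i}$ preserves $J$ (Lemma~\ref{lem: Ek Fk preserve}), commutes with every $x_i$, and satisfies $[F_k,y_i]=x_i^k$ — exactly the relations $[\CF_k,x_i]=0$, $[\CF_k,y_i]=x_i^k$ of Theorem~\ref{thm: Fk summary}(c). Hence $\CF_k-F_k$ is $\C[\xx,\yy]$-linear on $J$, so by the rigidity above it is multiplication by some homogeneous $r_k\in\C[\xx,\yy]$ (homogeneous because $\CF_k$ and $F_k$ carry the same internal degree). It remains to see $r_k=0$: since $\CF_k$ and $\sum_i x_i^k\partial_{y_i}$ have the same tridegree, $\CF_k$ strictly lowers the grading recording the $y$-degree on $J$, whereas multiplication by a nonzero polynomial cannot lower it, so $r_k=0$ and $\CF_k$ acts as $F_k$. (Alternatively, for $k=1$ one argues that $\CF_1$ is locally nilpotent on $\HY(\FT_n)$ by the ``curious hard Lefschetz'' property, while $\CF_1^m\Delta(\xx)=r_1^m\,\Delta(\xx)$ in the domain $\C[\xx,\yy]$, forcing $r_1=0$.)

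For $\Phi$ and $\CE_k$: from $\Phi x_i=y_i\Phi$ and $\Phi^2=\mathrm{Id}$ (Theorem~\ref{thm: Fk summary}(f)) we get $\Phi y_i=x_i\Phi$, so $\Phi$ restricts to an automorphism of $J=\HY^0(\FT_n)$ that is semilinear with respect to the swap $\xx\leftrightarrow\yy$. Writing $s\colon J\to J$ for the swap automorphism, $\Phi\circ s$ is $\C[\xx,\yy]$-linear and invertible, hence multiplication by a scalar $u$ with $u^2=1$ by $\Phi^2=\mathrm{Id}$. To get $u=1$, note that $\Delta(\yy)$ is a highest weight vector for the $\mathfrak{sl}_2$-action of Theorem~\ref{thm: Fk summary}(e): since $\CF_1=F_1=\sum_i x_i\partial_{y_i}$ and $\exp(\epsilon F_1)$ is the substitution $y_i\mapsto y_i+\epsilon x_i$, we have $\exp(\epsilon F_1)\Delta(\yy)=\prod_{i<j}\bigl((y_i-y_j)+\epsilon(x_i-x_j)\bigr)$, whence $F_1^{m}\Delta(\yy)=m!\,\Delta(\xx)$ and $F_1^{m+1}\Delta(\yy)=0$ with $m=\binom{n}{2}$; plugging $s=0$, $j=m$ into formula~\eqref{eq: def Phi} gives $\Phi(\Delta(\yy))=\tfrac1{m!}\CF_1^m\Delta(\yy)=\Delta(\xx)=u\cdot s(\Delta(\yy))$, so $u=1$ and $\Phi|_J$ is the swap of $\xx$ and $\yy$. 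Therefore $\CE_k=\Phi\CF_k\Phi=sF_ks=\sum_i y_i^k\partial_{x_i}=E_k$ on $J$, as in \eqref{eq: def E algebra}, \eqref{eq: def F algebra}. (For $n=1$ the braid is trivial and the statement is the computation for the unknot.)

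The main obstacle is the vanishing of the correction term $r_k$: the commutation relations of Theorem~\ref{thm: Fk summary} determine $\CF_k$ on $J$ only up to multiplication by a polynomial, and removing it requires knowing the precise tridegree of $\CF_k$ from \cite{GHM} and checking it matches that of $\sum_i x_i^k\partial_{y_i}$ — equivalently, that $\CF_k$ genuinely decreases the $y$-degree. One could alternatively transport the (elementary) computation of $\CF_k$ on the unlink $\HY(\one_n)$ along the map $\HY(\Psi)$ of Theorem~\ref{thm: GH}, but then the obstacle becomes showing that $\HY(\Psi)$ intertwines the $\CF_k$-actions, which does not follow formally from Theorem~\ref{thm: Fk summary}(d) and appears to need a closer look at the construction of $\Psi$. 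All the remaining ingredients — the rigidity of $J$, the substitution identity for $F_1$, and the $\mathfrak{sl}_2$ bookkeeping — are routine.
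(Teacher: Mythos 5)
Your argument is correct but takes a genuinely different route from the paper. The paper proves the theorem by transporting the (easy) computation on $\HY(\one_n)$ along the map $\Psi$, and it dispatches exactly the worry you raise at the end: it recalls that $\Psi$ is constructed in \cite{GH} as a composition of $\binom{n}{2}$ skein maps changing positive crossings to negative, each of which commutes with $\CF_k$ by Theorem~\ref{thm: Fk summary}(d), hence $\Psi$ itself intertwines $\CF_k$; then Corollary~\ref{cor: hard Lefshetz} handles $\Phi$ and $\CE_k$. Your approach instead uses the rigidity of $J$ as a torsion-free rank-one $\C[\xx,\yy]$-module (a clean observation: $\mathrm{End}_{\C[\xx,\yy]}(J)\cong\C[\xx,\yy]$ because the denominator would have to divide the coprime elements $\Delta(\xx),\Delta(\yy)$), which together with Theorem~\ref{thm: Fk summary}(c) pins down $\CF_k-F_k$ and $\Phi\circ s$ up to a polynomial, then kills the correction by degree bookkeeping and the explicit hwv computation $F_1^m\Delta(\yy)=m!\,\Delta(\xx)$. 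Both approaches do need, as input from \cite{GHM}, that $\CF_k$ has the same tridegree as $\sum_i x_i^k\partial_{y_i}$ — you flag this honestly, and the paper's one-line ``It follows from Theorem~\ref{thm: Fk summary}(c)\dots'' likewise uses this degree silently. What your route buys: it requires nothing about the internal structure of $\Psi$ beyond the module isomorphism $\HY^0(\FT_n)\simeq J$, and the $\mathfrak{sl}_2$ bookkeeping gives a pleasant consistency check for $\Phi$. What the paper's route buys: it is stated directly at the level of maps of complexes, and — more importantly — it generalizes verbatim to higher $a$-degrees (the paper reuses it in Section~\ref{sec: hooks}), whereas the rank-one rigidity of $J$ has no analogue for $\cJ\subset\C[\xx,\yy]\otimes\wedge(\ttheta)$, which has rank $2^n$.
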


\begin{proof}
Recall that the map $\Psi:\bFT_n\to \one_n^y$ was constructed in \cite{GH} by composing $\binom{n}{2}$ maps  from Proposition \ref{prop: skein} changing positive crossings to negative. By Theorem \ref{thm: Fk summary}(d), all these maps commute with the action of $\CF_k$ and hence $\Psi$ commutes with the action of $\CF_k$ as well.  By Corollary \ref{cor: hard Lefshetz} $\Psi$ commutes with the action of the involution $\Phi$ and of $\CE_k$.

It remains to describe the action of $\CF_k,\Phi$ and $\CE_k$ on $\HY(\one_n)$. It follows from Theorem \ref{thm: Fk summary}(c) that the action of $\CF_k$ is given by \eqref{eq: def F algebra}. By \eqref{eq: def Phi} $\Phi$ exchanges $x_i$ with $y_i$ and hence the action of $\CE_k$ is given by \eqref{eq: def E algebra}.
\end{proof}

Let $\delta\in \HY^0(\FT_n)$ be the  class corresponding to the unique copy of $R$ on the right of the $y$-ified Rouquier complex $\bFT_n$. 

\begin{lemma}
\label{lem: delta}
Under the isomorphism $\HY^0(\FT_n)\simeq J$ the class $\delta$ corresponds to $\Delta(\yy)$.  
\end{lemma}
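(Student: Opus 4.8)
The plan is to identify $\delta$ inside $\HY^0(\FT_n)\simeq J$ by combining two facts: its position in the Rouquier complex pins down its $(q,t,a)$-multidegree, and the map $\Psi$ of Theorem~\ref{thm: GH} together with the known description of $\HY(\one_n)$ lets us read off which antisymmetric polynomial it maps to. First I would recall from the construction in \cite{GH} that $\Psi:\bFT_n\to\one_n^y$ is built from the composition of $\binom{n}{2}$ skein maps changing positive crossings to negative, each of which on the level of the rightmost copy of $R$ is multiplication by a factor of the form $(y_i-y_j)$ (this is exactly the ``$y$-ified'' correction term in $T_i^y$ appearing in Proposition~\ref{prop: skein}). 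Tracking the rightmost generator $\delta_\beta$ through all these maps, one sees that $\Psi(\delta)$ is, up to a unit, the product $\prod_{i<j}(y_i-y_j)=\Delta(\yy)$, which lies in $J$ and in $a$-degree $0$. Since $\HY(\Psi)$ is injective with image $\cJ$ (Theorem~\ref{thm: GH}) and identifies $\HY^0(\FT_n)$ with $J\subset\C[\xx,\yy]$, this shows $\delta\mapsto\Delta(\yy)$.

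Alternatively, and perhaps more cleanly, I would argue by a grading/dimension count: the multidegree of $\delta$ in $\HY^0(\FT_n)$ is determined purely by homological bookkeeping of the Rouquier complex $\bFT_n$ (the rightmost $R$ sits in a specific $(q,t)$-bidegree, namely $q^{n(n-1)/2}t^0$ in the normalization used in Figure~\ref{Fig: catalan}), and under $\HY^0(\FT_n)\simeq J$ the homogeneous component of this bidegree is one-dimensional. Indeed $J$ in the lowest $t$-degree ($t^0$, i.e.\ in the $\yy$-variables only after setting $x$-degree considerations aside) in $x$-degree $\binom n2$ is spanned by $\Delta(\yy)$ — this is the same one-dimensionality fact invoked in the proof of Lemma~\ref{lem: cogenerate}. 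Combined with the fact that $\delta$ is a nonzero class (it is a free generator of the complex, hence survives to homology), this forces $\delta$ to be a nonzero scalar multiple of $\Delta(\yy)$, and rescaling the identification absorbs the scalar.

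The main obstacle is making the first (map-tracking) argument rigorous without redoing the computations of \cite{GH}: one must be careful that the identification $\HY^0(\FT_n)\simeq J$ furnished by Theorem~\ref{thm: GH} is the one induced by $\HY(\Psi)$, and that under this identification the distinguished generator $\delta$ of the rightmost $R$ maps to the image of that $R$ under $\Psi$, which requires knowing that no lower-order terms in the complex contribute to $\delta$'s homology class. This is really a statement that $\delta$ is not a boundary and that the component of $\Psi$ on the rightmost $R$ is the dominant one; both follow from the explicit form of $\Psi$ in \cite{GH}. For this reason I expect the cleanest exposition to lean on the grading argument of the second paragraph, invoking the one-dimensionality of the relevant graded piece of $J$ (equivalently of $\DR_n$, as used for Lemma~\ref{lem: cogenerate}) and the nonvanishing of $\delta$, and only remark that the map-tracking computation gives the same answer and fixes the normalization. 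A symmetric remark then records that $\Phi(\delta)$ corresponds to $\Delta(\xx)$, using Theorem~\ref{thm: tautol FT}.
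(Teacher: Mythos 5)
The paper's own proof is a single line: it cites \cite[Lemma~5.9]{BGHW}, which carries out exactly the map-tracking computation you sketch in your first paragraph (following $\delta$ through the crossing-change maps building $\Psi$). So your first approach is the ``right'' one in spirit, and your own caveat — that one must verify no lower-order terms of $\Psi$ contribute to the class of $\delta$ — is precisely the content of that cited lemma. Redoing it from scratch here would essentially mean reproving \cite[Lemma~5.9]{BGHW}, which you do not do; you only outline it.

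Your preferred second argument has a concrete error in the grading bookkeeping. You assert that $\delta$ sits in bidegree $q^{n(n-1)/2}t^0$ ``in the normalization used in Figure~\ref{Fig: catalan}''. But in Figure~\ref{Fig: catalan} that bidegree is occupied by $\Delta(\xx)$, not $\Delta(\yy)$; the bidegree of $\Delta(\yy)$ is $q^0t^{n(n-1)/2}$. If $\delta$ really had the bidegree you state, the argument would force $\delta\mapsto\Delta(\xx)$, contradicting the lemma. What is actually true (and readable off Figure~\ref{Fig: T34}) is that $\delta$ has $(Q,T)=(-n(n-1),0)$ and $\Phi(\delta)$ has $(Q,T)=(n(n-1),n(n-1))$; matching with Figure~\ref{Fig: catalan}, where $\Delta(\yy)$ is at $(q,t)=(0,\binom n2)$ and $\Delta(\xx)$ at $(\binom n2,0)$, one sees $\delta$ has the bidegree of $\Delta(\yy)$, as it should. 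Your internal gloss ``in the lowest $t$-degree ($t^0$, i.e.\ in the $\yy$-variables only)'' also conflates two incompatible readings of the $t$-axis; in Figure~\ref{Fig: catalan} the $t$-axis \emph{is} the $\yy$-degree, so $t^0$ means \emph{no} $\yy$-variables. The repaired statement — that $J$ in the bidegree of $\delta$ is one-dimensional and spanned by $\Delta(\yy)$ — is correct, and then the argument closes, but only after the bidegree is fixed.

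A smaller but real gap: ``it is a free generator of the complex, hence survives to homology'' is not a valid inference. In the $y$-ified complex the differential has a component $b_i^*(y_i-y_{i+1})$ mapping into $R$, so the rightmost $R$ is not a priori safe from incoming boundaries. The correct justification is a degree count of the same kind the paper uses in the proof of Lemma~\ref{lem: pi}: all differentials hitting that copy of $R$ have nonzero $(q,t)$-degree, so the degree-zero generator cannot be a boundary. You should either invoke that argument or note that nonvanishing of $\delta$ also follows once Lemma~\ref{lem: pi} is in place. With these two repairs (correct bidegree, honest nonvanishing argument) your grading approach would give a self-contained alternative to the paper's citation of \cite{BGHW}; the trade-off is that the citation also fixes the normalization constant, whereas the pure grading argument only determines $\delta$ up to scalar, which you correctly note must then be absorbed into the identification.
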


\begin{proof}
This follows from \cite[Lemma 5.9]{BGHW}.
\end{proof}

\subsection{From the full twist to $T(n,n+1)$}

Next, we study the $(n,n+1)$ torus braid 
$$
\beta_{n,n+1}=(\sigma_1\cdots\sigma_{n-1})^{n+1}=\FT_n(\sigma_1\cdots\sigma_{n-1}).
$$
This is no longer a pure braid, so we need to be careful when computing its $y$-ified homology.

\begin{lemma}
We have homotopy equivalences
$$
T_1^y\otimes \cdots\otimes  T_{n-1}^y\otimes K_{n-1}^y\otimes \cdots \otimes K_1^y\simeq T_1^y\otimes K_1^y\otimes \cdots \otimes T_{n-1}^y\otimes K_{n-1}^y
$$
which agree with the action of tautological classes.
\end{lemma}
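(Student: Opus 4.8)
The plan is to prove the homotopy equivalence by an inductive ``commuting past'' argument, moving each Koszul factor $K_i^y$ leftward past the Rouquier generators it commutes with until it lands next to $T_i^y$, and then to check compatibility with tautological classes at each step. The key structural input is that $K_{ij}^y$ only interacts with the strands $i,j$, so $K_{n-1}^y$ commutes up to homotopy with $T_1^y,\ldots,T_{n-3}^y$ on the nose (disjoint strands), and with $T_{n-2}^y$ one must use the Reidemeister-III-type homotopy for the triple $(n-2,n-1,n)$ — equivalently, the fact established in \cite{GH} that the $y$-ified braid relations hold, combined with Proposition~\ref{prop: skein} identifying the relevant cone with $K_i^y$. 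First I would record the elementary ``far commutation'' homotopy $K_{ij}^y\otimes T_k^y\simeq T_k^y\otimes K_{ij}^y$ whenever $\{i,j\}\cap\{k,k+1\}=\emptyset$, which is immediate since the two complexes act on disjoint sets of variables and $\ttheta$-factors.

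Next I would handle the ``adjacent'' case: I claim $T_{n-2}^y\otimes T_{n-1}^y\otimes K_{n-1}^y\simeq T_{n-2}^y\otimes K_{n-1}^y$-after-conjugation, more precisely that sliding $K_{n-1}^y$ left past $T_{n-1}^y$ turns it into a factor supported on strands $\{n-2,n-1\}$ (since $\sigma_{n-1}$ swaps strands $n-1$ and $n$). Concretely, using the curvature formula \eqref{eq: curvature} and Proposition~\ref{prop: skein}, $T_{n-1}^y\otimes K_{n-1}^y$ is homotopy equivalent to $K_{n-1}^y\otimes T_{n-1}^y$ only after accounting for the permutation; the cleanest route is to invoke the well-definedness statement in Theorem~\cite{GH} that $\HY(\beta)$ via \eqref{eq: def HY} is independent of the chosen minimal factorization of $w$ into transpositions, and to observe that both sides of the desired equivalence are two such chain-level models for $T^y(\beta_{n,n+1})$ tensored with the appropriate $K$-factors: the left side corresponds to the factorization where all Koszul factors are collected at the right, the right side to the ``interleaved'' factorization $w^{-1}=(\text{transposition from }\sigma_1)(\text{Koszul})\cdots$. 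Then I would carry out the induction: having moved $K_{n-1}^y$ into position next to $T_{n-1}^y$, the remaining factor $K_{n-2}^y\otimes\cdots\otimes K_1^y$ only involves strands $\le n-1$, so it commutes freely past the single rightmost block $T_{n-1}^y\otimes K_{n-1}^y$ (disjoint from strand $n$ once that block is isolated), and I repeat the argument inside the $(n-1)$-strand sub-braid.

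For the tautological-class compatibility, I would appeal to Theorem~\ref{thm: Fk summary}(d): the maps in the skein exact triangle of Proposition~\ref{prop: skein} commute with all $\CF_k$, and the ``far commutation'' homotopies are built from identity maps on disjoint tensor factors, hence trivially commute with the $\CF_k$ (which by Theorem~\ref{thm: Fk summary}(c) are built from $x_i,y_i$ and the local crossing data). Since each elementary homotopy in the induction is either a far-commutation move or an instance of a skein-triangle map, the composite homotopy equivalence commutes with the $\CF_k$; then Corollary~\ref{cor: hard Lefshetz} upgrades this to commutation with $\Phi$ and all $\CE_k$.

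The main obstacle I anticipate is the adjacent case — making precise the claim that sliding $K_{n-1}^y$ past $T_{n-1}^y$ is legitimate and produces exactly a Koszul factor on the shifted pair of strands, while keeping track of the curvature term in \eqref{eq: curvature}. This is really where the content of the $y$-ified braid relations and the factorization-independence of \eqref{eq: def HY} from \cite{GH} is used, and one must be careful that the homotopies are genuine homotopy equivalences of curved complexes (not just after applying $\HH$ and taking homology), so that the equivalence is available before the passage to $\HY$; but since Proposition~\ref{prop: skein} is stated at the level of curved complexes and the far-commutation moves are exact, this should go through.
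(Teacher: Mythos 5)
The paper's proof is shorter and avoids the difficulty you flagged. It cites \cite[Proposition 4.11]{GHM}, which asserts that the \emph{combined block} $T_{n-1}^y\otimes K_{n-1}^y$ commutes up to homotopy with every $K_i^y$, with the homotopy equivalences compatible with tautological classes. Because this block has trivial underlying permutation, pushing it rightward past $K_{n-2}^y\otimes\cdots\otimes K_1^y$ does not shift any strand labels, and the induction on $n$ is then immediate. Crucially, the paper never commutes a bare Koszul factor past a bare Rouquier generator.

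Your plan of sliding individual $K_i^y$ leftward past $T_j^y$ hits a concrete obstruction that your outline does not resolve. Your claim that $K_{n-1}^y=K_{n-1,n}^y$, when moved past $T_{n-1}^y$, becomes supported on $\{n-2,n-1\}$ is incorrect: $\sigma_{n-1}$ preserves the set $\{n-1,n\}$. The genuinely problematic step is the one you do not analyze: to reach the target word, $K_1^y=K_{12}^y$ must eventually be moved leftward past $T_2^y$, and conjugating by $\sigma_2$ turns it into $K_{13}^y$ rather than $K_{12}^y$, so the end product of your sliding scheme does not match the right-hand side of the claimed equivalence. Beyond this, your compatibility-with-$\CF_k$ argument only covers skein-triangle maps (Theorem \ref{thm: Fk summary}(d)) and disjoint-support swaps; it does not justify the adjacent moves. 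And invoking factorization-independence of \eqref{eq: def HY} gives information only at the level of homology, not the chain-level homotopy equivalences of curved complexes, commuting with $\CF_k$, that you actually need. The paper's choice of $T_j^y\otimes K_j^y$ as the commuting unit is exactly what makes all of these issues disappear.
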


\begin{proof}
By \cite[Proposition 4.11]{GHM} $T_{n-1}^y\otimes K_{n-1}^y$ commutes with $K_i^y$ for all $i$ and the corresponding homotopy equivalence agrees with the action of tautological classes. Therefore
$$
T_1^y\otimes \cdots\otimes  T_{n-1}^y\otimes K_{n-1}^y\otimes \cdots \otimes K_1^y\simeq 
T_1^y\otimes \cdots\otimes  (K_{n-2}\otimes \cdots \otimes K_1^y)\otimes (T_{n-1}^y\otimes K_{n-1}^y)
$$
and we can proceed by induction.
\end{proof}

\begin{corollary}
\label{cor: HY n n+1}
We have 
$$
\HY(\beta_{n,n+1})=\HY(\bFT_n\otimes T_1^y\otimes K_1^y\otimes \cdots \otimes T_{n-1}^y\otimes K_{n-1}^y).
$$
\end{corollary}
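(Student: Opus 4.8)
The plan is to unwind the definition \eqref{eq: def HY} of $\HY(\beta_{n,n+1})$ and then feed it into the preceding Lemma. Since $\FT_n$ is a pure braid, the permutation $w$ attached to $\beta_{n,n+1}=\FT_n(\sigma_1\cdots\sigma_{n-1})$ coincides with the permutation of $\sigma_1\cdots\sigma_{n-1}$, which is an $n$-cycle and hence has reflection length $n-1$. One checks (the pattern being visible already for $n=3$, where the relevant $3$-cycle is $(2,3)(1,2)$) that $w^{-1}$ admits the minimal-length factorization into transpositions
$$
w^{-1}=(n-1,n)(n-2,n-1)\cdots(1,2).
$$
Plugging this into \eqref{eq: def HY}, using that $\HY(\beta)$ does not depend on the chosen transposition factorization, that $T^y(\beta_{n,n+1})=\bFT_n\otimes T_1^y\otimes\cdots\otimes T_{n-1}^y$, and that $K_{i,i+1}^y=K_i^y$, we obtain
$$
\HY(\beta_{n,n+1})=H^*\bigl(\HH\bigl(\bFT_n\otimes T_1^y\otimes\cdots\otimes T_{n-1}^y\otimes K_{n-1}^y\otimes\cdots\otimes K_1^y\bigr)\bigr).
$$

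Next I would tensor the homotopy equivalence of curved complexes provided by the Lemma,
$$
T_1^y\otimes\cdots\otimes T_{n-1}^y\otimes K_{n-1}^y\otimes\cdots\otimes K_1^y\ \simeq\ T_1^y\otimes K_1^y\otimes\cdots\otimes T_{n-1}^y\otimes K_{n-1}^y,
$$
on the left with $\bFT_n$, which yields a homotopy equivalence of curved complexes. Applying $\HH$ term by term converts both sides into genuine complexes --- the Koszul factors cancel the curvature \eqref{eq: curvature}, exactly as in the definition \eqref{eq: def HY} --- and the homotopy equivalence descends to a homotopy equivalence of these complexes, hence to an isomorphism on $H^*$. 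Because the operators $\CF_k$ live on the chain level and the equivalence of the Lemma is compatible with them, this isomorphism intertwines the tautological classes. Therefore
$$
\HY(\beta_{n,n+1})=H^*\bigl(\HH\bigl(\bFT_n\otimes T_1^y\otimes K_1^y\otimes\cdots\otimes T_{n-1}^y\otimes K_{n-1}^y\bigr)\bigr)=\HY\bigl(\bFT_n\otimes T_1^y\otimes K_1^y\otimes\cdots\otimes T_{n-1}^y\otimes K_{n-1}^y\bigr),
$$
which is the assertion of the Corollary.

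The argument is essentially formal, so there is no serious obstacle; the only points requiring a little care are bookkeeping ones. First, one must verify that the $n$-cycle $w^{-1}$ really does factor into the adjacent transpositions producing exactly the Koszul complexes $K_{n-1}^y,\dots,K_1^y$, so that the left-hand side of the Lemma matches on the nose (the pairwise commutativity of the $K_i^y$, whose differentials are multiplication by central elements, takes care of any residual reordering, and the independence of \eqref{eq: def HY} on the factorization makes the choice harmless). Second, one should make sure the equivalence furnished by \cite{GHM} and restated in the Lemma is genuinely an equivalence of curved complexes over $\C[\xx,\yy]\otimes\wedge(\ttheta)$, not merely of ordinary complexes, so that it is preserved by the application of $\HH$; this is how it is formulated in \cite{GHM}.
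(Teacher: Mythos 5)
Your argument is correct and matches the intended (implicit) proof: the paper states the Corollary without a separate proof because it is meant to follow immediately from the preceding Lemma by unwinding Definition \eqref{eq: def HY}, exactly as you do. You correctly identify that the underlying permutation of $\beta_{n,n+1}$ is an $n$-cycle of reflection length $n-1$, that the factorization $(n-1,n)(n-2,n-1)\cdots(1,2)$ produces the Koszul factors $K_{n-1}^y\otimes\cdots\otimes K_1^y$ matching the left side of the Lemma, and that tensoring the Lemma's homotopy equivalence on the left with $\bFT_n$, applying $\HH$, and taking homology gives the assertion (the independence of \eqref{eq: def HY} on the chosen transposition factorization absorbs any convention mismatch). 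The remarks about the equivalence being at the level of curved complexes, and about compatibility with $\CF_k$, are the right points to flag and are indeed how it works in \cite{GHM}.
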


Next, we would like to construct a distinguished chain map from $\HY(\FT_n)$ to $\HY(\beta_{n,n+1})$ corresponding to adding crossings. 

\begin{lemma}
\label{lem: pi i}
There is a chain map $\pi_i:\one_n^y\to T_i^y\otimes K_i^y$ which  commutes with the action of tautological classes. Furthermore, $(x_i-x_{i+1})\pi_i$ and $(y_i-y_{i+1})\pi_i$ are null-homotopic. 
\end{lemma}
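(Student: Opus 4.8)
The plan is to build the chain map $\pi_i$ explicitly out of the structure maps already available for $y$-ified crossings, namely the map $b_i^*:R\to B_i$ and the curvature relations. Concretely, $T_i^y\otimes K_i^y$ is a two-by-two array of bimodules, and I would write $\pi_i$ as the composite of the unit-type map $\one_n^y = R[\yy]\xrightarrow{\psi} (T_i^y)^{-1}\otimes T_i^y$ (which exists because $(T_i^y)^{-1}$ and $T_i^y$ are inverse up to homotopy) followed by the chain map $(T_i^y)^{-1}\otimes T_i^y\to T_i^y\otimes K_i^y$ coming from Proposition \ref{prop: skein}: by that proposition the cone of $\psi:T_i^y\to (T_i^y)^{-1}$ is $K_i^y$, so there is a natural map $(T_i^y)^{-1}\to \mathrm{Cone}(\psi) = K_i^y$, and tensoring on the left with $T_i^y$ gives $(T_i^y)^{-1}\otimes T_i^y \to K_i^y\otimes T_i^y \simeq T_i^y\otimes K_i^y$ (the last equivalence because $K_i^y$ is ``central'' in the relevant sense, cf.\ \cite[Proposition 4.11]{GHM}). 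Composing produces $\pi_i:\one_n^y\to T_i^y\otimes K_i^y$. Alternatively, and perhaps more cleanly, one identifies $T_i^y\otimes K_i^y$ directly with a twisted complex and writes down $\pi_i$ by hand in matrix form: the component landing in the copy of $R[\yy]$ should be the identity, the component landing in $B_i[\yy]$ should be $b_i^*$, and one checks the differential annihilates this column using the curvature identity $D^2=(x_i-x_{i+1}')(y_i-y_{i+1})$ for $T_i^y$ against the Koszul differential $(x_i-x_j, y_i-y_j)$ of $K_i^y$ — the two curvatures cancel, which is exactly why the composite with $K_i^y$ is needed.

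For compatibility with tautological classes: the operators $\CF_k$ are constructed on the chain level in \cite{GHM} and, by Theorem \ref{thm: Fk summary}(d), the skein maps of Proposition \ref{prop: skein} commute with $\CF_k$; since every building block of $\pi_i$ (the coevaluation $\psi$, the cone map, and the centrality equivalence for $K_i^y$) either is one of these skein maps or is shown in \cite[Proposition 4.11]{GHM} to agree with the tautological action, the composite $\pi_i$ commutes with $\CF_k$. By Corollary \ref{cor: hard Lefshetz} it then also commutes with $\Phi$ and with $\CE_k$, so it commutes with all tautological classes.

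For the null-homotopy statement: the point is that multiplication by $x_i-x_{i+1}$ and by $y_i-y_{i+1}$ are null-homotopic \emph{on $K_i^y$ itself} — indeed $K_i^y$ is the Koszul complex on the pair $(x_i-x_{i+1}, y_i-y_{i+1})$, and for any element of a regular sequence, multiplication by that element is null-homotopic on the corresponding Koszul complex (the contracting homotopy is the standard one given by the other Koszul map). Tensoring this homotopy with $T_i^y$ shows $(x_i-x_{i+1})\cdot\mathrm{id}$ and $(y_i-y_{i+1})\cdot\mathrm{id}$ are null-homotopic on $T_i^y\otimes K_i^y$, hence their post-composition with $\pi_i$ is null-homotopic. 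I expect the main obstacle to be purely bookkeeping: verifying that the explicit matrix for $\pi_i$ is genuinely a chain map for the \emph{curved} differentials (getting the signs and the curvature cancellation right), and tracking the homotopy equivalence $K_i^y\otimes T_i^y\simeq T_i^y\otimes K_i^y$ carefully enough that the tautological-class compatibility is visibly inherited. Neither step is conceptually hard, but the curved/twisted-complex formalism makes the signs delicate.
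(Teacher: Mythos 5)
Your proposal is correct and follows essentially the same approach as the paper: the paper also constructs $\pi_i$ as the map $\psi_{-0}$ in the skein exact triangle relating $\one_n^y$, $T_i^y \otimes K_i^y$, and $(T_i^y)^2$, records the same explicit matrix form (identity into $R[\yy]$ and $b_i^*$ into $B_i[\yy]$), invokes Theorem~\ref{thm: Fk summary}(d) for compatibility with $\CF_k$ in exactly the way you suggest, and writes the null-homotopies for $(x_i-x_{i+1})\pi_i$ and $(y_i-y_{i+1})\pi_i$ that are, as you predict, the Koszul contracting homotopies of $K_i^y$ composed with $\pi_i$. Your ``unit then cone projection'' reformulation is an equivalent description of the same skein-triangle map, and your only minor imprecision is that regularity of the sequence is irrelevant to the null-homotopy (it holds for any two-term Koszul complex).
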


\begin{proof}
The map $\pi_i$ is a part of skein exact triangle from Lemma \ref{prop: skein} connecting $\one_n^y$, $T_i^y\otimes K_i^y$ and $(T_i^y)^2$, in particular, it commutes with the action of tautological classes by Theorem \ref{thm: Fk summary}(d). For the reader's convenience, we write the map explicitly:

\begin{center}
\begin{tikzcd}
 & &  & & R[\yy] \arrow[bend right]{dllll}{b^*} \arrow[bend left]{dddll}{1}\\
B_i[\yy] \arrow[bend left]{rr}{b} \arrow{dd}{y}& & R[\yy] \arrow{ll}{yb^*} \arrow[bend left]{dd}{y}& & \\
 & & & & \\
 B_i[\yy] \arrow {rr}{b} \arrow[bend left]{uu}{x}& & R[\yy] \arrow[bend left]{ll}{yb^*} \arrow{uu}{x} & & 
\end{tikzcd}
\end{center}
Here we abbreviate $b=b_i,b^*=b_i^*$, $x=x_i-x'_{i+1}$ and $y=y_i-y_{i+1}$. 
The homotopy for $(y_i-y_{i+1})\pi_i$ sends $R$ to the top copy of $R$, while the homotopy for $(x_i-x_{i+1})\pi_i$ sends $R$ to the bottom copy of $B_i$.
\end{proof}

\begin{remark}
One can check that $\pi_i$ can be interpreted as a $y$-ification of the Elias-Krasner cobordism map in \cite{EK}. Note that the crossing $\sigma_i$ closes up to the stabilized unlink, so
$$
\Hom(\one,T_i)=\HHH^0(\sigma_i)=\C[\xx]/(x_i-x_{i+1})
$$
while
$$
\Hom^y(\one^y,T_i^y\otimes K_i^y)=\HY^0(\sigma_i)=\C[\xx,\yy]/(x_i-x_{i+1},y_i-y_{i+1})
$$
and the map $\pi_i$ generates the latter morphism space while the Elias-Krasner map generates the former.
\end{remark}

\begin{lemma}
\label{lem: pi}
There is a degree zero chain map $\pi_y:\HY(\FT_n)\to \HY(\beta_{n,n+1})$  which commutes with the action of tautological classes and sends the class $\delta\in \HY^0(\FT_n)$ to a nonzero homology class. Furthermore,
$(x_i-x_j)\pi_y=(y_i-y_j)\pi_y=0$ for all $i$ and $j$.
\end{lemma}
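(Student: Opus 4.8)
The plan is to realize $\pi_y$ as the tensor product of the identity on $\bFT_n$ with the maps $\pi_1,\dots,\pi_{n-1}$ from Lemma~\ref{lem: pi i}. Using the identification $\one_n^y\otimes\cdots\otimes\one_n^y\simeq\one_n^y$ and the description of $\HY(\beta_{n,n+1})$ in Corollary~\ref{cor: HY n n+1}, tensoring the $\pi_i$ together and then with $\bFT_n$ on the left gives a map of curved complexes
$$
\bFT_n\ =\ \bFT_n\otimes\one_n^y\otimes\cdots\otimes\one_n^y\ \xrightarrow{\ \mathrm{id}\otimes\pi_1\otimes\cdots\otimes\pi_{n-1}\ }\ \bFT_n\otimes T_1^y\otimes K_1^y\otimes\cdots\otimes T_{n-1}^y\otimes K_{n-1}^y .
$$
Applying $\HH$ makes the curvature vanish on both sides --- on the source because $\FT_n$ is a pure braid, on the target precisely because of the Koszul factors inserted in Corollary~\ref{cor: HY n n+1} --- so we may pass to homology and let $\pi_y\colon\HY(\FT_n)\to\HY(\beta_{n,n+1})$ be the induced map. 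It is of degree zero because each $\pi_i$ is, by construction (Lemma~\ref{lem: pi i}).

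Commutation with tautological classes is then formal: each $\pi_i$ commutes with every $\CF_k$ by Lemma~\ref{lem: pi i}, and the chain-level operators $\CF_k$ are compatible with tensoring by a fixed complex, so $\mathrm{id}\otimes\pi_1\otimes\cdots\otimes\pi_{n-1}$, and hence $\pi_y$, commute with all $\CF_k$. As $\pi_y$ is in addition grading-preserving, Corollary~\ref{cor: hard Lefshetz} promotes this to commutation with $\Phi$ and with all $\CE_k$.

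For the vanishing statements it suffices to treat adjacent indices, since $x_i-x_j=\sum_{k=i}^{j-1}(x_k-x_{k+1})$ for $i<j$, and likewise for the $y$'s. Multiplication by $x_i-x_{i+1}$ can be moved onto the $i$-th tensor factor, so $(x_i-x_{i+1})\pi_y$ is obtained from $\pi_y$ by replacing $\pi_i$ with $(x_i-x_{i+1})\pi_i$; Lemma~\ref{lem: pi i} provides a null-homotopy of the latter, and tensoring it with the identity on $\bFT_n$ and on the remaining factors gives a null-homotopy of $(x_i-x_{i+1})\pi_y$ which survives $\HH$. Thus $(x_i-x_{i+1})\pi_y=0$ on homology, and the same argument (or the identity $\Phi x_i=y_i\Phi$ together with $\pi_y\Phi=\Phi\pi_y$) gives $(y_i-y_{i+1})\pi_y=0$.

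The one substantial point is that $\pi_y(\delta)\neq 0$, and this is where I expect the work to be. By construction $\delta$ is the generator of the rightmost copy of $R[\yy]$ in $\bFT_n$, and the explicit formula for $\pi_i$ in the proof of Lemma~\ref{lem: pi i} shows that $\pi_i$ carries the generator of $\one_n^y$ to a cycle whose component in the rightmost copy of $R[\yy]$ of $T_i^y\otimes K_i^y$ equals $1$. Hence $\pi_y(\delta)$ is represented, after $\HH$, by the canonical generator $\delta_{\beta_{n,n+1}}$ of the positive braid $\beta_{n,n+1}$, i.e.\ the rightmost $R[\yy]$ in the complex of Corollary~\ref{cor: HY n n+1}. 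The required nonvanishing of $\delta_{\beta_{n,n+1}}$ then follows from \cite[Lemma~5.9]{BGHW} applied to $\beta_{n,n+1}$ in place of $\FT_n$ --- concretely it corresponds, under the identification of $\HY^0$ with an ideal of antisymmetric polynomials, to the Vandermonde $\Delta(\yy)$, matching the statement in the introduction. Everything else in the proof is a formal consequence of the properties of $\pi_i$, $\CF_k$ and $\HH$ recorded above, so the chain-level identification of $\pi_y(\delta)$ with the distinguished generator, together with this nonvanishing input, is the main obstacle.
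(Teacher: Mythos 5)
Your construction of $\pi_y$ is the same as the paper's, and the commutation with tautological classes and the vanishing $(x_i-x_j)\pi_y = (y_i-y_j)\pi_y=0$ are argued in essentially the same way. However, the two remaining points — the degree computation and the nonvanishing of $\pi_y(\delta)$ — are handled differently, and the nonvanishing argument has a genuine gap.

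For the degree-zero claim, you assert it follows "because each $\pi_i$ is, by construction" of degree zero, but Lemma~\ref{lem: pi i} does not state this, and it is not automatic: the complexes $T_i^y$ and $K_i^y$ carry internal grading shifts relative to $\one_n^y$, so a chain map need not preserve degree. The paper addresses this either by explicit grading-tracking or, more cleverly, by comparing with the analogous map $\pi_{\one,y}:\HY^0(\one_n)\to\HY^0(T_1^y K_1^y\cdots T_{n-1}^y K_{n-1}^y)$, which has the same degree as $\pi_y$, and observing that the target computes the $y$-ified homology of the stabilized unknot, known without any shift. Your claim would need one of these arguments, or a direct check.

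The nonvanishing of $\pi_y(\delta)$ is where the substantive gap lies. You correctly observe that $\pi_y(\delta)$ has a nonzero component $1$ in a distinguished copy of $R[\yy]$, and then wish to conclude it is the "canonical generator $\delta_{\beta_{n,n+1}}$" and is nonzero by \cite[Lemma~5.9]{BGHW} "applied to $\beta_{n,n+1}$ in place of $\FT_n$." But that lemma is invoked by the paper (Lemma~\ref{lem: delta}) only to identify $\delta$ with $\Delta(\yy)$ under the isomorphism $\HY^0(\FT_n)\simeq J$ from Theorem~\ref{thm: GH} — an isomorphism specific to the pure braid $\FT_n$. There is no analogous identification available for $\beta_{n,n+1}$ at this stage; indeed establishing it is what the paper is working toward. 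Moreover, the complex computing $\HY(\beta_{n,n+1})$ contains the extra Koszul factors $K_i^y$, so the "rightmost $R$" here is not literally the rightmost term of the Rouquier complex of $\beta_{n,n+1}$, and nonvanishing of its generator requires a separate argument — a priori it could be killed by a Koszul differential. The paper closes this gap directly: the class $\pi_y(\delta)$ has a component in a copy of $R$ in $(q,t)$-degree $0$, while all differentials into that $R$ — whether from Koszul complexes $K_i^y$ or, after applying $\HH$, from $\HH(B_i)\to\HH(R)$ — have strictly nonzero $(q,t)$-degree, so $\pi_y(\delta)$ cannot be a boundary. You should replace the citation by this degree argument (or supply an independent justification for extending BGHW's lemma to this mixed complex).
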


\begin{proof}
We get $\pi_y$ by composing the maps $\pi_i$ from Lemma \ref{lem: pi i} for all $i$ and applying Corollary \ref{cor: HY n n+1}. Let us prove that $\pi_y(\delta)$ is nonzero. Indeed, the chain complex $\bFT_n\otimes T_1^y\otimes K_1^y\otimes \cdots \otimes T_{n-1}^y\otimes K_{n-1}^y$ contains several copies of $R$ connected by various differentials. By the above, the class $\pi_y(\delta)$ is a closed element of this complex which has a component in one of these copies of $R$ of degree $0$. On the other hand, all differentials in the Koszul complexes $K_i^y$ have nonzero $(q,t)$ degree, and after applying $\HH$ all maps $\HH(B_i)\to \HH(R)$  also land in nonzero degrees. Therefore $\pi_y(\delta)$ cannot be a boundary and hence represents a nonzero homology class.

Next, we check that $\pi_y$ has degree zero. This can be done explicitly by tracking all grading conventions. However, we can shortcut by considering an analogous map 
\begin{equation}
\label{eq: stabilized unknot}
\pi_{\one,y}:\HY^0(\one_n)\to \HY^0(T_1^y\otimes K_1^y\otimes \cdots \otimes T_{n-1}^y\otimes K_{n-1}^y).
\end{equation}
Clearly, $\pi_{\one,y}$ and $\pi_y$ have the same degree.
Similarly to the above, the map $\pi_{\one,y}$ is nonzero. However, the right hand side of \eqref{eq: stabilized unknot} computes the $y$-ified homology of the stabilized unknot which equals $\HY^0(\one_n)/(x_i-x_j,y_i-y_j)$ with no shift, so $\pi_{\one,y}$ (and hence $\pi_y$) has degree zero.

All other properties of $\pi_y$ follow from Lemma \ref{lem: pi i}.
\end{proof}

\begin{corollary}
\label{cor:pi bar}
There is a chain map $\overline{\pi}_y:\HY^0(\FT_n)\to \overline{\HHH}^0(\beta_{n,n+1})$  which commutes with the action of tautological classes and sends the class $\delta\in \HY^0(\FT_n)$ to a nonzero homology class. Furthermore,
$x_i\overline{\pi}_y=y_i\overline{\pi}_y=0$ for all $i$.
\end{corollary}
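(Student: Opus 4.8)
The plan is to produce $\overline{\pi}_y$ by reducing the map $\pi_y$ of Lemma \ref{lem: pi} modulo the maximal ideal $(\xx,\yy)$. Since $\beta_{n,n+1}$ closes up to the knot $T(n,n+1)$, the identity \eqref{eq: knots intro} gives $\HY^0(\beta_{n,n+1})=\overline{\HHH}^0(T(n,n+1))[x,y]$; here all of the $x_i$ act as the single variable $x$ and all of the $y_i$ as $y$, so $(\xx,\yy)\HY^0(\beta_{n,n+1})=(x,y)\HY^0(\beta_{n,n+1})$ and the quotient is $\overline{\HHH}^0(T(n,n+1))$. Let $\rho\colon\HY^0(\beta_{n,n+1})\to\overline{\HHH}^0(T(n,n+1))$ denote this quotient map. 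As $\pi_y$ has degree $0$ it preserves the $a$-grading, hence restricts to a map $\HY^0(\FT_n)\to\HY^0(\beta_{n,n+1})$, and I set $\overline{\pi}_y=\rho\circ\pi_y|_{a=0}$.

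Equivariance and the vanishing relations are then formal. The submodule $(\xx,\yy)\HY^0(\beta_{n,n+1})$ is preserved by every $\CF_k$, since $[\CF_k,x_i]=0$ and $[\CF_k,y_i]=x_i^k$ (Theorem \ref{thm: Fk summary}(c)) force $\CF_k\bigl((\xx,\yy)M\bigr)\subseteq(\xx,\yy)M$ for any module $M$; it is likewise preserved by $\Phi$ (because $\Phi x_i=y_i\Phi$), and therefore by $\CE_k=\Phi\CF_k\Phi$. Thus $\rho$ commutes with all tautological classes, and since $\pi_y$ does by Lemma \ref{lem: pi}, so does $\overline{\pi}_y$. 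Finally $x_i$ and $y_i$ act as $0$ on $\overline{\HHH}^0(T(n,n+1))=\HY^0(\beta_{n,n+1})/(\xx,\yy)$, whence $x_i\overline{\pi}_y=y_i\overline{\pi}_y=0$.

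The one substantive point is $\overline{\pi}_y(\delta)=\rho(\pi_y(\delta))\neq0$, i.e. $\pi_y(\delta)\notin(\xx,\yy)\HY^0(\beta_{n,n+1})$, and this follows from the very argument used in Lemma \ref{lem: pi} to show $\pi_y(\delta)\neq0$. There $\pi_y(\delta)$ is represented by a cycle $z$ in $\HH\bigl(\bFT_n\otimes T_1^y\otimes K_1^y\otimes\cdots\otimes T_{n-1}^y\otimes K_{n-1}^y\bigr)$ whose component in a certain copy $R_\star$ of $R$ is $c\cdot1$ with $c\neq0$, sitting in the internal degree of $\delta$. If $\pi_y(\delta)=\sum_i x_i[u_i]+\sum_i y_i[v_i]$ for some cycles $u_i,v_i$, then $z-\sum_i x_iu_i-\sum_i y_iv_i$ is a boundary, so its $R_\star$-component $c\cdot1-\sum_i x_i(u_i)_{R_\star}-\sum_i y_i(v_i)_{R_\star}$ lies in the $R_\star$-component of the image of the differential. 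But the terms $x_i(u_i)_{R_\star}$ and $y_i(v_i)_{R_\star}$ all lie in $(\xx,\yy)R_\star$, and — as recalled in the proof of Lemma \ref{lem: pi} — every differential into $R_\star$ (the Koszul differentials by $x_i-x_{i+1}$ or $y_i-y_{i+1}$, and the maps $\HH(B_i)\to\HH(R)$) has nonzero $(q,t)$-degree, so its image inside $R_\star$ consists of non-constant polynomials, hence also lies in $(\xx,\yy)R_\star$. Since $c\cdot1\notin(\xx,\yy)R_\star$ with $c\neq0$, this is a contradiction, and so $\overline{\pi}_y(\delta)\neq0$.

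I do not anticipate a serious obstacle here: the statement is a corollary of Lemma \ref{lem: pi} together with \eqref{eq: knots intro}. The one place requiring genuine attention is the last paragraph — one must make sure that the ``leading component'' of $\pi_y(\delta)$ extracted in the proof of Lemma \ref{lem: pi} is exactly the generator of $R_\star$ in the internal degree of $\delta$, so that it is not killed upon passing to the quotient by $(\xx,\yy)$, and that no differential occurring in the complex preserves that internal degree; everything else is formal manipulation of \eqref{eq: knots intro}. That $\overline{\pi}_y$ subsequently factors through $\DR_n^{\sgn}$ and induces the grading-preserving isomorphism $\pi'$ of \eqref{eq: pi prime intro} is taken up afterward, in Theorem \ref{thm: final}.
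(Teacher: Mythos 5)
Your proof is correct and takes essentially the same route as the paper: the paper's one-line proof cites the decomposition $\HY^0(\beta_{n,n+1})=\overline{\HHH}^0(\beta_{n,n+1})\otimes \C[x_1+\ldots+x_n,y_1+\ldots+y_n]$, which is the $a$-degree-zero part of your \eqref{eq: knots intro}, and implicitly relies on exactly the chain-level degree argument from Lemma \ref{lem: pi} that you spell out for $\overline{\pi}_y(\delta)\neq 0$. You have simply made explicit what the paper leaves to the reader.
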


\begin{proof}
This follows from Lemma \ref{lem: pi} and decomposition
$$
\HY^0(\beta_{n,n+1})=\overline{\HHH}^0(\beta_{n,n+1})\otimes \C[x_1+\ldots+x_n,y_1+\ldots+y_n].
$$
\end{proof}

Next, we recall the following result conjectured by the first author in \cite{G} and proved in \cite{H}. 

\begin{theorem}[\cite{H}]
\label{thm: dimension}
The bigraded Hilbert series of $\overline{\HHH}^0(\beta_{n,n+1})$ is given by the $q,t$-Catalan number $c_n(q,t)$ and agrees with the one of $\DR_n^{\sgn}$. In particular, the total dimension of $\overline{\HHH}^0(\beta_{n,n+1})$ equals $c_n$.
\end{theorem}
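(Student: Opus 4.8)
This is Hogancamp's theorem \cite{H}; it is invoked here as an input rather than reproved, so I sketch how one would establish it directly. The strategy is to compute the bigraded Hilbert series $P_n(q,t)$ of $\overline{\HHH}^0(\beta_{n,n+1})$ by a recursion in $n$ and to identify that recursion with one known to characterize the $q,t$-Catalan polynomial $c_n(q,t)$, i.e.\ the bigraded Hilbert series of $\DR_n^{\sgn}$.

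First I would not treat $\beta_{n,n+1}$ in isolation but place it inside the family of Hochschild homologies of the ($y$-ified) Rouquier complexes of the braids $(\sigma_1\cdots\sigma_{n-1})^m$ on all numbers of strands, retaining the module structure over $\C[\xx,\yy]$. The $n$-strand torus braid is built from the $(n-1)$-strand one by adding a strand, and $\beta_{n,n+1}=\FT_n(\sigma_1\cdots\sigma_{n-1})$ is obtained from $\FT_n$ by adding $n-1$ crossings one at a time. Taking Theorem \ref{thm: GH} for the full twist as the base of the induction, and using the skein exact triangle of Proposition \ref{prop: skein} together with the exact sequences that follow it to resolve the newly added crossings, one obtains, for each crossing, a long exact sequence relating $\HY^0$ (equivalently $\HHH^0$) of $T(n,n+1)$ to that of a link with one fewer crossing and to that of a Koszul-modified complex. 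Iterating and carefully tracking which connecting maps vanish should yield a closed recursion for $P_n(q,t)$.

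It then remains to match this recursion with a known recursion for $c_n(q,t)$ -- for instance the Garsia--Haglund summation formula, or the recursion implicit in Haiman's description of $\DR_n^{\sgn}$ via $\mathrm{Hilb}^n(\C^2)$ in \cite{HaimanInv} -- and to check that the base cases agree, giving $P_n(q,t)=c_n(q,t)$. I expect the middle step to be the genuine obstacle: HOMFLY homology of torus links is ``thick'', so the skein long exact sequences do not split for parity reasons, and one must prove the vanishing of the relevant higher differentials and connecting maps directly. This is precisely where Hogancamp's construction of minimal models for torus-link complexes -- via categorical diagonalization, following Elias and Hogancamp -- does the real work; an alternative that sidesteps the recursion is to invoke the explicit closed formula for $\HHH$ of \emph{all} torus links, specialize it to $T(n,n+1)$, and verify by a bigraded combinatorial identity that the result equals $c_n(q,t)$, which merely relocates the difficulty into that identity. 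Finally, it is worth noting that the results already assembled in this paper yield for free an injection $\DR_n^{\sgn}\hookrightarrow\overline{\HHH}^0(\beta_{n,n+1})$: the map $\overline{\pi}_y$ of Corollary \ref{cor:pi bar} kills every $x_i$ and $y_i$, hence factors through $\HY^0(\FT_n)\simeq J$ modulo $\mm J$, i.e.\ through $J/\mm J\simeq\DR_n^{\sgn}$; it sends the classes $\Delta(\yy)$ and $\Delta(\xx)$ (corresponding to $\delta$ and $\Phi(\delta)$) to nonzero homology classes; and combining this with the co-generation Lemma \ref{lem: cogenerate} and Corollary \ref{cor: hard Lefshetz} forces the induced map $\DR_n^{\sgn}\to\overline{\HHH}^0(\beta_{n,n+1})$ to be injective. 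So it would in fact suffice to prove the single inequality $\dim\overline{\HHH}^0(\beta_{n,n+1})\le c_n$; but this inequality is itself essentially as hard as the full computation sketched above.
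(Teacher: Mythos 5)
The paper gives no proof of this statement: it is quoted as Theorem~\ref{thm: dimension} precisely because it is Hogancamp's theorem \cite{H} (combined with Haiman's identification \cite{HaimanInv} of the Hilbert series of $\DR_n^{\sgn}$ with $c_n(q,t)$), and it enters the present paper as an external dimension count. You correctly recognize this and your high-level sketch of Hogancamp's argument is faithful: the real work is producing a tractable recursion for the Hochschild homology of the Rouquier complex of $\beta_{n,n+1}$, which \cite{H} does via the Elias--Hogancamp categorical diagonalization technology (minimal reduced complexes for torus links) rather than by controlling connecting maps in skein long exact sequences directly, as you note yourself.

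Your closing observation is worth flagging: the injectivity half of Theorem~\ref{thm: final} -- that $\overline{\pi}_y$ factors through $J/\mm J\simeq\DR_n^{\sgn}$, sends $\Delta(\yy)$ to a nonzero class, and is injective because $\CE_1,\dots,\CE_{n-1}$ cogenerate via Lemma~\ref{lem: cogenerate} and Corollary~\ref{cor: hard Lefshetz} -- genuinely does not use Theorem~\ref{thm: dimension}. So, as you say, the logical input from \cite{H} that the present paper actually needs can be weakened to the single inequality $\dim\overline{\HHH}^0(\beta_{n,n+1})\le c_n$, the opposite direction being free. You are also right that this reformulation does not materially reduce the difficulty, since the known proof of the upper bound goes through the same full computation; but it is a clean way to see exactly how much of \cite{H} the present argument consumes.
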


\begin{theorem}
\label{thm: final}
The map $\overline{\pi}_y: \HY^0(\FT_n)\to \overline{\HHH}^0(\beta_{n,n+1})$ is surjective. We have a commutative diagram where the vertical arrows are isomorphisms commuting with the action of tautological classes:
\begin{center}
\begin{tikzcd}
\HY^0(\FT_n)
\arrow{d}{\simeq} \arrow{r}{\overline{\pi}_y}& \overline{\HHH}^0(\beta_{n,n+1}) \arrow{d}{\simeq} \\
J \arrow{r} & J/\mm J.
\end{tikzcd}
\end{center}
\end{theorem}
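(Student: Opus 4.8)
The plan is to observe that $\overline{\pi}_y$ automatically factors through $J/\mm J$, to identify the induced map with the desired right vertical arrow, and then to prove it is an isomorphism by a dimension count combined with the co-generation statement of Lemma~\ref{lem: cogenerate}.

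First, recall from Theorem~\ref{thm: GH} and Theorem~\ref{thm: tautol FT} that $\HY^0(\FT_n)\simeq J$ as $\C[\xx,\yy]$-modules, compatibly with the action of $\CF_k,\CE_k$, which on $J$ become the operators $F_k,E_k$ of \eqref{eq: def E algebra}--\eqref{eq: def F algebra}, and under which $\delta$ corresponds to $\Delta(\yy)$ by Lemma~\ref{lem: delta}. Since Corollary~\ref{cor:pi bar} gives $x_i\overline{\pi}_y=y_i\overline{\pi}_y=0$, the map $\overline{\pi}_y$ kills $\mm J=(\xx,\yy)J$, hence factors as $J\twoheadrightarrow J/\mm J\xrightarrow{\overline{\pi}'}\overline{\HHH}^0(\beta_{n,n+1})$. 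This factorization is exactly the asserted commutative square: the bottom arrow is the tautological quotient $J\to J/\mm J$, the left vertical is the isomorphism of Theorem~\ref{thm: GH}, and the right vertical is $\overline{\pi}'$. Because $\overline{\pi}_y$ is degree zero and commutes with tautological classes, so does $\overline{\pi}'$; and by Lemma~\ref{lem: Ek Fk preserve} the operators $F_k,E_k$ descend to $J/\mm J\simeq\DR_n^{\sgn}$ (Lemma~\ref{lem: catalan}), so the statement is internally consistent.

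It remains to show $\overline{\pi}'$ is an isomorphism. By Theorem~\ref{thm: dimension} the target has total dimension $c_n$, and by Lemma~\ref{lem: catalan} together with Haiman's theorem the source $J/\mm J\simeq\DR_n^{\sgn}$ also has dimension $c_n$, so it suffices to prove $\overline{\pi}'$ is injective; its kernel is a bigraded subspace since $\overline{\pi}'$ preserves the bigrading. Note $\overline{\pi}'(\Delta(\yy))=\overline{\pi}_y(\delta)\neq 0$ by Corollary~\ref{cor:pi bar} and Lemma~\ref{lem: delta}. Suppose toward a contradiction that $f\in J/\mm J$ is a nonzero homogeneous element with $\overline{\pi}'(f)=0$. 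By Lemma~\ref{lem: cogenerate} there is a polynomial $\psi$ with $\psi(E_1,\ldots,E_{n-1})f=c\,\Delta(\yy)$ for some $c\neq 0$. Applying $\overline{\pi}'$ and using that it intertwines $E_k$ with $\CE_k$, we get $c\,\overline{\pi}'(\Delta(\yy))=\overline{\pi}'\big(\psi(E_1,\ldots,E_{n-1})f\big)=\psi(\CE_1,\ldots,\CE_{n-1})\,\overline{\pi}'(f)=0$, contradicting $\overline{\pi}'(\Delta(\yy))\neq 0$. Hence $\overline{\pi}'$ is injective, thus bijective, and $\overline{\pi}_y$ is surjective.

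The substantive inputs are external: the dimension formula $\dim\overline{\HHH}^0(\beta_{n,n+1})=c_n$ (Theorem~\ref{thm: dimension}, due to Hogancamp) and Haiman's ``operator conjecture'' (Theorem~\ref{thm: operator conjecture}), which is what makes the co-generation Lemma~\ref{lem: cogenerate} available; the rest is a short diagram chase. The one point that needs care is that $\overline{\pi}'$ genuinely commutes with the dual operators $\CE_k$, since these are only defined on the level of homology via the involution $\Phi$ — but this is precisely what Corollary~\ref{cor: hard Lefshetz}, applied to the honest $\HY$-level map $\pi_y$ of Lemma~\ref{lem: pi}, together with the passage to reduced homology in Corollary~\ref{cor:pi bar}, already provides. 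I expect that matching of the $\CE_k$-actions across the identifications to be the only real obstacle; once it is in hand, the argument reduces to the dimension count plus one application of Haiman.
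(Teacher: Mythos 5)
Your proposal is correct and follows essentially the same route as the paper's proof: factor $\overline{\pi}_y$ through $J/\mm J$ using the vanishing $x_i\overline{\pi}_y=y_i\overline{\pi}_y=0$ from Corollary~\ref{cor:pi bar}, show injectivity of the induced map via Lemma~\ref{lem: cogenerate} applied at $\Delta(\yy)$ (whose image is nonzero by Lemma~\ref{lem: delta}), and conclude bijectivity from the dimension count of Theorem~\ref{thm: dimension}. Your explicit remark that commutation with the $\CE_k$ (which exist only on homology) must be deduced via Corollary~\ref{cor: hard Lefshetz} applied to the chain-level map $\pi_y$ is a point the paper's proof relies on implicitly; making it explicit is a small improvement in exposition but not a different argument.
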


\begin{proof}
By Corollary \ref{cor:pi bar} the map $\overline{\pi}_y$ factors through the quotient:
$$
\HY^0(\FT_n)\twoheadrightarrow \HY^0(\FT_n)/\mm \HY^0(\FT_n) \xrightarrow{\pi'} \overline{\HHH}^0(\beta_{n,n+1}).
$$
We can rewrite it as a map
$$
\pi':J/\mm J\to  \overline{\HHH}^0(\beta_{n,n+1}).
$$
Let us prove that $\pi'$ is injective. By Lemma \ref{lem: delta} we have $\pi'(\Delta(\yy))=\overline{\pi}_y(\delta)\neq 0$.

Now we identify $J/\mm J\simeq \DR_n^{\sgn}$ and use Lemma \ref{lem: cogenerate}. Suppose that $f\in J/\mm J$ is a nonzero class. Then there exists a polynomial $\psi(E_1,\ldots,E_{n-1})$ such that  $\psi(E_1,\ldots,E_{n-1})f=c\Delta(\yy)$ for $c\neq 0$. Now 
$$
\psi(\CE_1,\ldots,\CE_{n-1})\pi'(f)=\pi'\left[\psi(E_1,\ldots,E_{n-1})f\right]=\pi'(c\Delta(\yy))=c\overline{\pi}_y(\delta)\neq 0.
$$
Therefore $\pi'(f)\neq 0$ as well, and $\pi'$ is injective. On the other hand, by Theorem \ref{thm: dimension} the total dimensions of $J/\mm J$ and $\overline{\HHH}^0(\beta_{n,n+1})$ agree, hence $\pi'$ is an isomorphism.
\end{proof}

\begin{corollary}
The maps $\pi_y:\HY^0(\FT_n)\to \HY^0(\beta_{n,n+1})$ and $\pi:\HHH^0(\FT_n)\to \HHH^0(\beta_{n,n+1})$ are surjective.
\end{corollary}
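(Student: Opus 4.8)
The plan is to deduce both statements from the surjectivity of $\overline{\pi}_y$ established in Theorem \ref{thm: final}: the statement for $\pi_y$ via a graded Nakayama argument, and the statement for $\pi$ by reducing modulo $(\yy)$.

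For $\pi_y$, I would first observe that $\pi_y$ commutes with multiplication by each $x_i$ and each $y_i$. Indeed $\pi_y$ is obtained by composing the maps $\pi_i$ of Lemma \ref{lem: pi i} with the homotopy equivalences of Corollary \ref{cor: HY n n+1}, and all of these are maps of complexes of $\C[\xx,\yy]$-bimodules (as is visible from the explicit diagram in the proof of Lemma \ref{lem: pi i}); this property is preserved under tensoring and composition and passes to homology. Setting $P=x_1+\cdots+x_n$ and $Q=y_1+\cdots+y_n$, it follows that $\mathrm{Im}(\pi_y)\subseteq \HY^0(\beta_{n,n+1})$ is a graded $\C[P,Q]$-submodule. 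By the decomposition $\HY^0(\beta_{n,n+1})=\overline{\HHH}^0(\beta_{n,n+1})\otimes \C[P,Q]$ from Corollary \ref{cor:pi bar}, the quotient $\HY^0(\beta_{n,n+1})/(P,Q)\HY^0(\beta_{n,n+1})$ is $\overline{\HHH}^0(\beta_{n,n+1})$, and under this identification the composition of $\pi_y$ with the quotient map is exactly $\overline{\pi}_y$, which is surjective by Theorem \ref{thm: final}. Hence $\mathrm{Im}(\pi_y)+(P,Q)\HY^0(\beta_{n,n+1})=\HY^0(\beta_{n,n+1})$, and since $\HY^0(\beta_{n,n+1})$ is a free $\C[P,Q]$-module of finite rank (equal to $c_n$ by Theorem \ref{thm: dimension}) and all modules in sight are graded and bounded below, the graded Nakayama lemma applied to the cokernel of $\pi_y$ shows that this cokernel vanishes. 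Thus $\pi_y$ is surjective.

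For $\pi$, recall that $\HHH^0(\FT_n)=\HY^0(\FT_n)/(\yy)\HY^0(\FT_n)$ by Theorem \ref{thm: GH}, and $\HHH^0(\beta_{n,n+1})=\HY^0(\beta_{n,n+1})/(\yy)\HY^0(\beta_{n,n+1})$, using that $\HY^0(\beta_{n,n+1})=\HHH^0(\beta_{n,n+1})[y]$ by \eqref{eq: knots intro} and that all the $y_i$ coincide on $\HY(\beta_{n,n+1})$ since its closure is a knot. Since $\pi_y$ is built from the $y$-ified Elias--Krasner maps $\pi_i$, setting $\yy=0$ recovers the Elias--Krasner map $\pi$ (cf.\ the remark after Lemma \ref{lem: pi i}); equivalently, $\pi$ is the map induced by $\pi_y$ on the quotients by $(\yy)$. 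As a quotient of a surjection, $\pi$ is therefore surjective.

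The bookkeeping steps — the graded Nakayama lemma and passage to quotients — are routine; the points I would want to verify carefully are the two compatibility claims being used, namely that $\pi_y$ acts $\C[\xx,\yy]$-linearly on homology (so that $\mathrm{Im}(\pi_y)$ is a $\C[P,Q]$-submodule) and that $\pi$ is precisely the reduction of $\pi_y$ modulo $(\yy)$. Both follow from inspecting the chain-level constructions of Section \ref{sec:links} and the references \cite{GH,GHM,EK}, and require no new ideas.
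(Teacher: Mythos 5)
Your proof is correct, but it takes a noticeably different route from the paper's. The paper deduces surjectivity of $\pi_y$ by upgrading Theorem~\ref{thm: final} to an explicit identification $\HY^0(\beta_{n,n+1})\simeq J/(x_i-x_j,y_i-y_j)J$ (and $\HHH^0(\beta_{n,n+1})\simeq J/(x_i-x_j,\yy)J$), under which $\pi_y$ and $\pi$ become the tautological projections out of $J$ and $J/(\yy)J$; surjectivity is then manifest. You instead stay at the level of surjectivity: using $\C[P,Q]$-linearity of $\pi_y$ (with $P=\sum x_i$, $Q=\sum y_i$), the free decomposition $\HY^0(\beta_{n,n+1})=\overline{\HHH}^0(\beta_{n,n+1})\otimes\C[P,Q]$, and the surjectivity of $\overline{\pi}_y$ from Theorem~\ref{thm: final}, you conclude via graded Nakayama that $\operatorname{coker}(\pi_y)=0$, and then get $\pi$ surjective by reducing mod $(\yy)$ (using $\HY(K)=\HHH(K)[y]$ for a knot and the fact that $\pi_y$ $y$-ifies the Elias--Krasner map). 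Your argument is logically a bit more economical for the stated corollary, since it does not require establishing that the induced map $J/(x_i-x_j,y_i-y_j)J\to \HY^0(\beta_{n,n+1})$ is an isomorphism (which the paper's phrasing implicitly asserts but whose injectivity would need an extra freeness or rank argument); the paper's formulation, on the other hand, yields the stronger and more explicit description of $\HY^0(\beta_{n,n+1})$ and $\HHH^0(\beta_{n,n+1})$ as quotients of $J$. Both routes hinge on Theorem~\ref{thm: final} and on the chain-level compatibility of $\pi_y$ with the module structure and with $\pi$, which you correctly flag as the points to double-check.
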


\begin{proof}
By Theorem \ref{thm: final} we can write
$$
\HY^0(\beta_{n,n+1})\simeq J/(x_i-x_j,y_i-y_j)J,\ \HHH^0(\beta_{n,n+1})\simeq J/(x_i-x_j,\yy)J.
$$
This proves the first statement since $\pi_y$ can be identified with the projection
$$
J\twoheadrightarrow J/(x_i-x_j,y_i-y_j)J.
$$
For the second statement, we use Theorem \ref{thm: GH} and identify the map $\pi$ with the projection
$$
\HHH^0(\FT_n)=J/(\yy)J\twoheadrightarrow J/(x_i-x_j,\yy)J.
$$
\end{proof}

\subsection{Hooks and higher $a$-degrees}
\label{sec: hooks}

In this section we  generalize Theorem \ref{thm: final} to higher $a$-degrees. First, we need some notations. Let $V=\C^{n-1}$ be the $(n-1)$-dimensional reflection representation of $S_n$, define $V_i=\wedge^{i}V,\ 0\le i\le n-1$. It is known that $V_i$ are irreducible representations of $S_n$ labeled by hook-shaped partitions $(n-i,1^{i})$, and $\wedge^{n-1-i}V=\wedge^iV\otimes \sgn$.

\begin{definition}
If $W$ is a representation of $S_n$, we define 
$$
W^{\hook}=\bigoplus_{i=0}^{n-1}\Hom_{S_n}(\wedge^{n-1-i}V,W).
$$
\end{definition}
By Schur's lemma we get
$$
W^{\hook}=\left[\bigoplus_{i=0}^{n-1}\wedge^{n-1-i}V\otimes W\right]^{S_n}=\left[\bigoplus_{i=0}^{n-1}\wedge^{i}V\otimes W\right]^{\sgn}=\left[\wedge^{\bullet}V\otimes W\right]^{\sgn}.
$$
It will be useful to realize $V$ explicitly as the quotient of the $n$-dimensional polynomial representation:
$$
\langle \theta_1,\ldots,\theta_n\rangle = V\oplus \langle \theta_1+\ldots+\theta_n\rangle,
$$
so that 
\begin{equation}
\label{eq: reduced wedge}
\wedge(\ttheta)=\wedge^{\bullet}\langle \theta_1,\ldots,\theta_n\rangle=\wedge^{\bullet} V\otimes \wedge^{\bullet}\langle\theta_1+\ldots+\theta_n\rangle.
\end{equation}

\begin{lemma}
\label{lem: solomon}
Let $\omega_N=\sum_{i=1}^nx_i^{N}\theta_i$. Then
$$
\left[\wedge(\ttheta)\otimes \C[\xx]\right]^{S_n}=\wedge^{\bullet}\langle 
\omega_0,\ldots,\omega_{n-1}\rangle \otimes \C[\xx]^{S_n},
$$
$$
\left[\wedge^{\bullet}V\otimes \C[\xx]\right]^{S_n}=\wedge^{\bullet}\langle 
\omega_1,\ldots,\omega_{n-1}\rangle \otimes \C[\xx]^{S_n},
$$
\end{lemma}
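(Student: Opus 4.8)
The plan is to prove this via Solomon's theorem on the invariants of the exterior algebra of a reflection representation, combined with standard facts about the coinvariant algebra. Recall Solomon's theorem: for a finite reflection group $W$ acting on a real vector space $U$ with basic invariants $f_1,\ldots,f_r$ (of degrees $d_1,\ldots,d_r$), the module of invariant differential forms $\left[\wedge^{\bullet} U^* \otimes \C[U]\right]^W$ is a free $\C[U]^W$-module with basis the wedge products of the exact forms $df_1,\ldots,df_r$. Here we want to apply this with $W = S_n$ acting on $U = \C^n$ with coordinates $x_1,\ldots,x_n$; the basic invariants can be taken to be the power sums $p_N = \sum_i x_i^N$ for $N=1,\ldots,n$, and $d p_N = N \sum_i x_i^{N-1}\theta_i = N\,\omega_{N-1}$, identifying $\theta_i$ with $dx_i$. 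Thus $\left[\wedge(\ttheta)\otimes\C[\xx]\right]^{S_n}$ is a free $\C[\xx]^{S_n}$-module with basis $\{\omega_{i_1}\wedge\cdots\wedge\omega_{i_k} : 0\le i_1<\cdots<i_k\le n-1\}$, which is exactly the first displayed equality since the right-hand side $\wedge^{\bullet}\langle\omega_0,\ldots,\omega_{n-1}\rangle\otimes\C[\xx]^{S_n}$ denotes precisely the free module on those wedge products.

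For the second equality, I would use the decomposition \eqref{eq: reduced wedge}: since $\langle\theta_1,\ldots,\theta_n\rangle = V\oplus\langle\theta_1+\cdots+\theta_n\rangle$ as $S_n$-modules, we have $\wedge(\ttheta) = \wedge^{\bullet}V\otimes\wedge^{\bullet}\langle\theta_1+\cdots+\theta_n\rangle$, and correspondingly $\omega_0 = \sum_i\theta_i = \theta_1+\cdots+\theta_n$ spans the trivial summand. Taking $S_n$-invariants commutes with tensoring by the trivial module $\wedge^{\bullet}\langle\omega_0\rangle$, so
$$
\left[\wedge(\ttheta)\otimes\C[\xx]\right]^{S_n} = \wedge^{\bullet}\langle\omega_0\rangle\otimes\left[\wedge^{\bullet}V\otimes\C[\xx]\right]^{S_n}.
$$
Comparing with the first equality (where the free basis elements either contain the factor $\omega_0$ or do not), we read off $\left[\wedge^{\bullet}V\otimes\C[\xx]\right]^{S_n} = \wedge^{\bullet}\langle\omega_1,\ldots,\omega_{n-1}\rangle\otimes\C[\xx]^{S_n}$ as desired. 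Alternatively one can invoke Solomon's theorem directly for the reflection representation $V$ of $S_n$ with basic invariants of degrees $2,\ldots,n$ whose differentials are proportional to $\omega_1,\ldots,\omega_{n-1}$.

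The main point to be careful about — rather than a genuine obstacle — is the identification of $\theta_i$ with the differential $dx_i$ and the verification that $S_n$ acts on the $\theta_i$ exactly as it acts on the $dx_i$, so that Solomon's theorem applies verbatim; this is immediate from the grading conventions in \cite{GH} but worth stating. One should also note that $dp_N = N\omega_{N-1}$ so the scalars $N$ are invertible over $\C$, ensuring that $\{\omega_0,\ldots,\omega_{n-1}\}$ and $\{dp_1,\ldots,dp_n\}$ span the same exterior subalgebra. Everything else is a direct application of a classical structure theorem.
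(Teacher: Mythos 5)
Your proof is correct and follows essentially the same route as the paper: both identify $\theta_i$ with $dx_i$, invoke Solomon's theorem to get a free $\C[\xx]^{S_n}$-basis from the differentials $dp_k = k\omega_{k-1}$ of the power sums, and then deduce the second equality from the splitting \eqref{eq: reduced wedge} with $\omega_0 = \theta_1+\cdots+\theta_n$ spanning the trivial summand. The extra remarks you add about the $S_n$-equivariance of $\theta_i \leftrightarrow dx_i$ and the invertibility of the scalars $k$ are correct and harmless.
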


\begin{proof}
We can identify $\wedge(\ttheta)\otimes \C[\xx]$ with the space $\Omega(\C^n)$ of polynomial differential forms on $\C^n$ by writing $\theta_i=dx_i$. By a result of Solomon \cite{Solomon} we have
$$
\Omega(\C^n)^{S_n}\simeq \Omega(\C^n/S_n),
$$
so the $S_n$-invariant differential forms are freely generated by power sums $p_k=\sum_{i=1}^{k}x_i^k,\ 1\le k\le n$ and their differentials
$$
dp_k=k\sum_{i=1}^{k}x_i^{k-1}dx_i=k\omega_{i-1}.
$$
The last statement follows from \eqref{eq: reduced wedge} since $\omega_0=\theta_1+\ldots+\theta_n$.
\end{proof}

\begin{definition}
We define the operators $\theta_i^*$ on $\wedge(\ttheta)$ by $\theta_i^*(\theta_j)=\delta_{ij}$ and extending by Leibniz rule.

We define the operators $d^*_N,d_N$ on $\wedge(\ttheta)\otimes \C[\xx]$ as follows:
\begin{equation}
\label{eq: dN algebra}
d_N=\sum_{i=1}^{n}\theta_i^*x_i^{N},\ d_N^*=\sum_{i=1}^{n}\theta_i\partial_{x_i}^{N},\ N\ge 0.
\end{equation}
For $N>0$ we can use \eqref{eq: reduced wedge} to interpret $d_N$ and $d_N^*$ as operators on
$\wedge^{\bullet}V\otimes \C[\xx]$.
\end{definition}

\begin{lemma}
\label{lem: hooks generate}
a) The space 
$$
\DH_n^{\hook}\simeq \left[\wedge^{\bullet}V\otimes \DH_n\right]^{\sgn}.
$$
is generated by $\Delta(\xx)$ under the action of $\C[F_1^*,\ldots,F_{n-1}^*]\otimes \wedge^{\bullet}\langle d_{1}^*,\ldots,d_{n-1}^*\rangle$.

b) For any nonzero homogeneous element 
$$
f\in \DR_n^{\hook}\simeq \left[\wedge^{\bullet}V\otimes \DR_n\right]^{\sgn}
$$
there exists a polynomial $\varphi\in \C[F_1,\ldots,F_{n-1}]\otimes \wedge^{\bullet}\langle d_{1},\ldots,d_{n-1}\rangle$ such that $\varphi(f)$ is a nonzero multiple of $\Delta(\xx)$.
\end{lemma}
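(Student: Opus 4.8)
The plan is to bootstrap from the sign-component statements (Corollary \ref{cor: operator conjecture sgn} and Lemma \ref{lem: cogenerate}) to the full hook component, using the extra operators $d_N^*$ and $d_N$ to move between the exterior degrees. First I would set up the bigrading/trigrading bookkeeping: an element of $\left[\wedge^{\bullet}V\otimes \DH_n\right]^{\sgn}$ decomposes according to exterior degree $i$, and by Lemma \ref{lem: solomon} (applied after tensoring with $\DH_n$ rather than $\C[\xx]^{S_n}$) the operators $d_1^*,\ldots,d_{n-1}^*$ act by wedging with the invariant forms $\omega_1,\ldots,\omega_{n-1}$, raising exterior degree by one, while $d_N^*$ commutes with the $F_k^*$ in an appropriate sense. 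The base case $i=0$ is exactly Corollary \ref{cor: operator conjecture sgn}: $\Delta(\xx)$ generates $\DH_n^{\sgn}$ under $\C[F_1^*,\ldots,F_{n-1}^*]$. For the inductive step on $i$, I would use Haiman's description of $\DH_n$ together with the observation that $\left[\wedge^{\bullet}V\otimes \DH_n\right]^{\sgn}$ in exterior degree $i$ is, via the adjunction $\wedge^i V\otimes\sgn\cong\wedge^{n-1-i}V$, the isotypic component $\Hom_{S_n}(\wedge^{n-1-i}V,\DH_n)$; the wedge operators $d_k^*$ realize the maps between consecutive such components and, by Solomon's theorem, together with multiplication these span all $S_n$-equivariant maps raising exterior degree. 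Concretely: given $f\in\DH_n^{\hook}$ of exterior degree $i$, I would produce it from $\Delta(\xx)$ by first generating the degree-$0$ part using the $F_k^*$, then applying a suitable monomial in the $d_k^*$ to reach exterior degree $i$, adjusting by further $F_k^*$ in between. This gives part (a).

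Part (b) follows from part (a) by the same duality argument as in Lemma \ref{lem: cogenerate}. I would identify $\DR_n^{\hook}\simeq\left[\wedge^{\bullet}V\otimes\DR_n\right]^{\sgn}$ with the graded dual of $\DH_n^{\hook}$, noting that $F_k$ is (up to scalar) adjoint to $F_k^*$ and that $d_N$ is (up to scalar) adjoint to $d_N^*$ with respect to the induced pairing on $\wedge(\ttheta)\otimes\C[\xx,\yy]$ — here one extends the Haiman pairing by declaring $\langle\theta_i,\theta_j\rangle=\delta_{ij}$, so that $\theta_i^*$ is adjoint to $\theta_i\cdot$. The element $\Delta(\xx)$ (in exterior degree $0$) spans a one-dimensional graded piece of $\DR_n^{\hook}$ of bidegree $q^{n(n-1)/2}t^0$ and exterior degree $0$, just as in Lemma \ref{lem: cogenerate}. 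For a nonzero homogeneous $f\in\DR_n^{\hook}$, suppose for contradiction that $\langle\varphi(f),\Delta(\xx)\rangle=0$ for every $\varphi\in\C[F_1,\ldots,F_{n-1}]\otimes\wedge^{\bullet}\langle d_1,\ldots,d_{n-1}\rangle$; by adjunction this says $\langle f,\varphi^*(\Delta(\xx))\rangle=0$ for all such $\varphi^*$, where $\varphi^*$ ranges over $\C[F_1^*,\ldots,F_{n-1}^*]\otimes\wedge^{\bullet}\langle d_1^*,\ldots,d_{n-1}^*\rangle$. By part (a) these $\varphi^*(\Delta(\xx))$ span all of $\DH_n^{\hook}$, contradicting nondegeneracy of the pairing. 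Hence some $\varphi(f)$ has a nonzero $\Delta(\xx)$-component, and since that graded-and-exterior-degree piece is one-dimensional, $\varphi(f)$ is — after discarding the (automatically zero) contributions in other graded pieces by projecting, or by choosing $\varphi$ homogeneous of the right multidegree — a nonzero multiple of $\Delta(\xx)$.

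The main obstacle I anticipate is part (a): unlike the sign component, where Haiman's Operator Conjecture (Theorem \ref{thm: operator conjecture}) directly gives generation by the $F_k^*$ after antisymmetrizing away the $\partial_{x_i}$, here one must also control how the exterior variables $\theta_i$ interact with the generation. The key technical point is that the $d_N^*$ for $N\ge 1$, acting on $\wedge^{\bullet}V\otimes\DH_n$, together with the $F_k^*$, exhaust the relevant equivariant operations — this is where Lemma \ref{lem: solomon} (Solomon's theorem on invariant differential forms) does the real work, by showing the invariant forms are freely generated by the $\omega_k=dp_{k+1}/(k+1)$, i.e.\ by the images of the $d_k^*$. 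One must be a little careful that after tensoring with the non-free module $\DH_n$ (rather than working over $\C[\xx]^{S_n}$) the generation statement survives; I would handle this by reducing modulo $\C[\xx,\yy]^{S_n}_+$ at the end and invoking that the hook isotypic components of $\DR_n$ are, by Haiman's results, exactly the $S_n$-equivariant image of $\DH_n$, so that Solomon's basis of invariant forms still suffices to generate $\left[\wedge^{\bullet}V\otimes\DH_n\right]^{\sgn}$ from its exterior-degree-zero part.
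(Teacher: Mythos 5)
Your overall strategy matches the paper's: bootstrap from Corollary \ref{cor: operator conjecture sgn} using Solomon's theorem (Lemma \ref{lem: solomon}) to handle the exterior variables, then deduce (b) by duality as in Lemma \ref{lem: cogenerate}. However, for part (a) your handling of the key technical worry you yourself identify (``after tensoring with the non-free module $\DH_n$\dots the generation statement survives'') is off. You propose to ``reduce modulo $\C[\xx,\yy]^{S_n}_+$ at the end,'' but $\DH_n$ is a subspace of $\C[\xx,\yy]$, not a quotient, and $\DH_n\cap\C[\xx,\yy]^{S_n}_+=0$, so there is nothing to reduce. The correct resolution is much simpler and requires no reduction at all: first deduce from Theorem \ref{thm: operator conjecture} that $\Delta(\xx)$ generates $\wedge^\bullet V\otimes\DH_n$ under $\wedge^\bullet V\otimes\C[F_1^*,\ldots,F_{n-1}^*,\partial_{x_1},\ldots,\partial_{x_n}]$; then, exactly as in the proof of Corollary \ref{cor: operator conjecture sgn}, average over $S_n$ to see that any element of $[\wedge^\bullet V\otimes\DH_n]^{\sgn}$ is obtained from $\Delta(\xx)$ by an operator in the $S_n$-invariant subalgebra $[\wedge^\bullet V\otimes\C[F^*,\partial_x]]^{S_n}$, which by Lemma \ref{lem: solomon} equals $\C[F_1^*,\ldots,F_{n-1}^*]\otimes\C[\partial_{x_1},\ldots,\partial_{x_n}]^{S_n}\otimes\wedge^\bullet\langle d_1^*,\ldots,d_{n-1}^*\rangle$; finally, observe that any positive-degree element of $\C[\partial_{x_1},\ldots,\partial_{x_n}]^{S_n}$ annihilates $\Delta(\xx)$ (as it annihilates all of $\DH_n$, by definition of diagonal harmonics), so the factor $\C[\partial_x]^{S_n}$ drops out. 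This antisymmetrization argument is cleaner than your proposed induction on exterior degree, which was not fully carried out; the rest of your proposal, including the duality argument for (b), is sound.
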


\begin{proof}
By Theorem \ref{thm: operator conjecture} the element $\Delta(\xx)$ generates $\wedge^{\bullet}V\otimes \DH_n$ under the action of 
$$
\wedge^{\bullet}(V)\otimes \C[F_1^*,\ldots,F_{n-1}^*,\partial_{x_1},\ldots,\partial_{x_n}].
$$
Similarly to Corollary \ref{cor: operator conjecture sgn}, we deduce that $\Delta(\xx)$  generates 
$
\left[\wedge^{\bullet}V\otimes \DH_n\right]^{\sgn}
$
under the action of 
$$
\left[\wedge^{\bullet}(V)\otimes \C[F_1^*,\ldots,F_{n-1}^*,\partial_{x_1},\ldots,\partial_{x_n}]\right]^{S_n}=\C[F_1^*,\ldots,F_{n-1}^*]\otimes \left[\wedge^{\bullet}(V)\otimes\C[\partial_{x_1},\ldots,\partial_{x_n}]\right]^{S_n}=
$$
$$
\C[F_1^*,\ldots,F_{n-1}^*]\otimes \C[\partial_{x_1},\ldots,\partial_{x_n}]^{S_n}\otimes \wedge^{\bullet}\langle d_1^*,\ldots,d_{n-1}^*\rangle.
$$
The last equation follows from Lemma \ref{lem: solomon}. Since any homogeneous polynomial of positive degree in $\C[\partial_{x_1},\ldots,\partial_{x_n}]^{S_n}$ annihilates $\Delta(\xx)$, part (a) follows.

Part (b) is dual to (a) and the proof is similar to Lemma \ref{lem: cogenerate}.
\end{proof}

\begin{definition}
Let $\cJ$ be the ideal in $\C[\xx,\yy]\otimes \wedge(\ttheta)$ generated by antisymmetric polynomials and let $\overline{\cJ}=\cJ/\langle \theta_1+\ldots+\theta_n\rangle \cJ\subset \wedge^{\bullet}V\otimes \C[\xx,\yy]$.
\end{definition}

\begin{lemma}
\label{lem: iota}
We have an isomorphism
$$
\overline{\cJ}/\mm\overline{\cJ}\simeq \DR_n^{\hook}.
$$
\end{lemma}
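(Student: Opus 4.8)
The plan is to carry over the argument of Lemma~\ref{lem: catalan}, now twisted by the reflection representation. Write $A=\C[\xx,\yy]\otimes\wedge(\ttheta)$, $\widetilde{A}=\wedge^{\bullet}V\otimes\C[\xx,\yy]$ and $\omega_0=\theta_1+\cdots+\theta_n$. By \eqref{eq: reduced wedge} together with $\omega_0^2=0$ we have $A=\widetilde{A}\oplus\omega_0\widetilde{A}$, so $A/\omega_0A=\widetilde{A}$. Since $\omega_0$ is $S_n$-invariant, an element $g=g'+\omega_0g''$ of $A$ (with $g',g''\in\widetilde{A}$) is antisymmetric if and only if $g',g''\in\widetilde{A}^{\sgn}:=[\wedge^{\bullet}V\otimes\C[\xx,\yy]]^{\sgn}$; hence the projection $A\to\widetilde{A}$ carries the antisymmetric elements of $A$ onto $\widetilde{A}^{\sgn}$, and the image of $\cJ$ in $\widetilde{A}$ is the $\widetilde{A}$-submodule $\widetilde{A}\cdot\widetilde{A}^{\sgn}$. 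A short check with this description gives $\cJ\cap\omega_0A=\omega_0\cJ$, so this image is indeed $\overline{\cJ}$; in particular $\overline{\cJ}=\widetilde{A}\cdot\widetilde{A}^{\sgn}$, the $\C[\xx,\yy]$-submodule of $\widetilde{A}$ generated by $\wedge^{\bullet}V\cdot\widetilde{A}^{\sgn}$.

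Next I would reduce everything to one claim. For homogeneous $a\in\widetilde{A}$ and $g\in\widetilde{A}^{\sgn}$, antisymmetry of $g$ gives $\sgn(\sigma)\sigma(ag)=\sigma(a)g$, hence $\Alt(ag)=\Sym(a)\,g$; by linearity $z-\Alt(z)=\sum_i(a_i-\Sym(a_i))g_i$ for any $z=\sum_ia_ig_i\in\overline{\cJ}$. If $a_i$ has positive $(\xx,\yy)$-degree then $a_i-\Sym(a_i)\in\mm\widetilde{A}$ and the term lies in $\mm\overline{\cJ}$; if $a_i$ is a scalar the term vanishes; the only remaining case is $a_i\in\wedge^{\ge1}V$, where $\Sym(a_i)=0$ and the term is $a_ig_i\in\wedge^{\ge1}V\cdot\widetilde{A}^{\sgn}$. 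Thus, once we know
$$
(\star)\qquad\wedge^{\ge1}V\cdot\widetilde{A}^{\sgn}\subseteq\mm\overline{\cJ},
$$
we get $z-\Alt(z)\in\mm\overline{\cJ}$ for all $z\in\overline{\cJ}$; equivalently, $\overline{\cJ}=\widetilde{A}^{\sgn}+\mm\overline{\cJ}$ and $\overline{\cJ}/\mm\overline{\cJ}$ is concentrated in the sign-isotypic type. To prove $(\star)$, decompose $g\in\widetilde{A}^{\sgn}$ by $\wedge$-degree, $g=\sum_kg_k$; for $\eta\in\wedge^jV$ with $j\ge1$ only the summands with $k\le n-1-j\le n-2$ survive in $\eta g$, and for those $k$ the space $[\wedge^kV\otimes\C[\xx,\yy]]^{\sgn}=\Hom_{S_n}(\wedge^{n-1-k}V,\C[\xx,\yy])$ contains no nonzero element of polynomial degree $0$ (as $\wedge^{n-1-k}V\not\cong\sgn$), so every surviving $g_k$ has positive polynomial degree. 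Combining this with the description $\overline{\cJ}=\widetilde{A}\cdot\widetilde{A}^{\sgn}$ and the representation theory of $S_n$ on $\C[\xx,\yy]$ (the Solomon-type input of Lemma~\ref{lem: solomon} and the hook--tensor considerations behind Lemma~\ref{lem: hooks generate}) should yield $(\star)$. This is the step I expect to be the main obstacle: for $\C[\xx]$ alone $(\star)$ is immediate from Solomon's theorem, but with two sets of variables one must control the $S_n$-module structure of $\C[\xx,\yy]$.

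Granting $(\star)$, the proof finishes as in Lemma~\ref{lem: catalan}. The natural map $\widetilde{A}^{\sgn}\to\overline{\cJ}/\mm\overline{\cJ}$ is surjective by $\overline{\cJ}=\widetilde{A}^{\sgn}+\mm\overline{\cJ}$; iterating this, and using that $\overline{\cJ}$ is a finitely generated graded $\C[\xx,\yy]$-module so $\mm^N\overline{\cJ}$ vanishes in each fixed degree for $N\gg0$, one gets $\mm\overline{\cJ}=\sum_{j\ge1}\mm^j\widetilde{A}^{\sgn}$. Applying $\Alt(ag)=\Sym(a)g$ with $a\in\mm^j$ a polynomial shows $(\mm^j\widetilde{A}^{\sgn})^{\sgn}\subseteq\mathcal{L}:=\C[\xx,\yy]^{S_n}_{+}\cdot\widetilde{A}^{\sgn}$, while $\mathcal{L}\subseteq(\mm\widetilde{A}^{\sgn})^{\sgn}$ is clear; hence the kernel of $\widetilde{A}^{\sgn}\to\overline{\cJ}/\mm\overline{\cJ}$ is $(\mm\overline{\cJ})^{\sgn}=\mathcal{L}$. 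As in Lemma~\ref{lem: catalan}, $\mathcal{L}=[\wedge^{\bullet}V\otimes\C[\xx,\yy]^{S_n}_{+}]^{\sgn}$ (write such an element as $\sum f_ih_i$ with $f_i$ symmetric of positive degree, and apply $\Alt$). Therefore
$$
\overline{\cJ}/\mm\overline{\cJ}\;\cong\;\widetilde{A}^{\sgn}/\mathcal{L}\;=\;\frac{[\wedge^{\bullet}V\otimes\C[\xx,\yy]]^{\sgn}}{[\wedge^{\bullet}V\otimes\C[\xx,\yy]^{S_n}_{+}]^{\sgn}}\;=\;[\wedge^{\bullet}V\otimes\DR_n]^{\sgn}\;=\;\DR_n^{\hook},
$$
the penultimate equality because $\wedge^{\bullet}V$ is finite dimensional, so $\wedge^{\bullet}V\otimes(-)$ and passage to the $\sgn$-isotypic component are exact, and the last being the definition of $\DR_n^{\hook}$ from Section~\ref{sec: hooks}.
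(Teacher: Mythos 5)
Your proposal follows the same broad template as the paper (mirror the argument of Lemma~\ref{lem: catalan} with $\wedge^\bullet V$ twisted in), but it has a genuine gap at precisely the point you flag. You describe $\overline{\cJ}$ as $\widetilde{A}\cdot\widetilde{A}^{\sgn}$, i.e.\ the ideal generated by antisymmetric elements over the whole algebra $\widetilde{A}=\wedge^\bullet V\otimes\C[\xx,\yy]$, and are then forced to deal separately with the multipliers of $\theta$-degree $\ge 1$ and polynomial degree $0$ — this is your $(\star)$, which you do not prove and correctly identify as the obstacle. The paper sidesteps $(\star)$ entirely by invoking \cite[Lemma 7.5]{GH}: the ideal $\cJ$ (hence $\overline{\cJ}$) is already generated by its antisymmetric part as a module over $\C[\xx,\yy]$ alone, so $\overline{\cJ}=\C[\xx,\yy]\cdot\widetilde{A}^{\sgn}$. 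With that in hand, every element of $\overline{\cJ}$ is $\sum f_i g_i$ with $f_i\in\C[\xx,\yy]$ and $g_i$ antisymmetric; the $f_i$ of positive degree are absorbed into $\mm\overline{\cJ}$, the scalars contribute antisymmetric classes, and there is no residual $\wedge^{\ge 1}V$ case to handle. The remainder of the paper's proof then identifies $\overline{\cJ}^{\sgn}$ with $\C[\xx,\yy]^{\hook}$ and $(\mm\overline{\cJ})^{\sgn}$ with $\cL=\bigl[\wedge^\bullet V\otimes\C[\xx,\yy]^{S_n}_+\bigr]^{\sgn}$ exactly as you do in your final paragraph.

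Your suggestion that $(\star)$ ``should'' follow from Lemma~\ref{lem: solomon} and the hook--tensor considerations is not right as stated: Solomon's theorem controls $\wedge(\ttheta)\otimes\C[\xx]$ (one set of variables), while $(\star)$ concerns $\wedge^\bullet V\otimes\C[\xx,\yy]$, where the $S_n$-module structure of $\C[\xx,\yy]$ is governed by the much deeper results of Haiman rather than Solomon; and Lemma~\ref{lem: hooks generate} is downstream of the present lemma, not available as input. The cleanest way to close your argument is to import \cite[Lemma 7.5]{GH} and replace the decomposition $\overline{\cJ}=\widetilde{A}\cdot\widetilde{A}^{\sgn}$ by $\overline{\cJ}=\C[\xx,\yy]\cdot\widetilde{A}^{\sgn}$ from the outset; then $(\star)$ becomes unnecessary and the rest of your write-up, including the final chain of isomorphisms, goes through.
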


\begin{proof}
We follow the proof of Lemma \ref{lem: catalan}.
Let $\cL\subset \wedge^{\bullet}V\otimes \C[\xx,\yy]$ be the subspace spanned by the products $f(\xx,\yy)g(\xx,\yy,\ttheta)$ where $f$ and $g$ are homogeneous, $f$ has positive degree and symmetric, and $g$ is antisymmetric. 
Let us prove that 
$$
\left[\C[\xx,\yy]^{S_n}_{+}\right]^{\hook}\simeq\cL.
$$
Indeed, we get
$$
\left[\C[\xx,\yy]^{S_n}_{+}\right]^{\hook} =\left[\wedge^{\bullet}V\otimes \C[\xx,\yy]^{S_n}_{+}\right]^{\sgn}.
$$
The space $\wedge^{\bullet}V\otimes \C[\xx,\yy]^{S_n}_{+}$ is spanned by the sums $\sum f_i(\xx,\yy)h_i(\xx,\yy,\ttheta)$ where $f_i(\xx,\yy)$ are symmetric polynomials of positive degree, and $h_i$ are arbitrary. Similarly to the proof of Lemma \ref{lem: catalan}, by antisymmetrizing such a sum we get an element in $\cL$.

By \cite[Lemma 7.5]{GH} the ideal $\cJ$ (and hence $\overline{\cJ}$) is in fact generated by antisymmetric polynomials over $\C[\xx,\yy]$. Therefore any element of $\mm\cJ$ can be written as $\sum g_i(\xx,\yy,\ttheta)h_i(\xx,\yy)$ where $g_i$ are antisymmetric and $h_i$ have positive degree, in particular $\cJ/\mm\cJ$ is spanned by antisymmetric polynomials. Similarly to the proof of Lemma \ref{lem: catalan}, we get 
$\cL=\left[\mm\overline{\cJ}\right]^{\sgn}$. 
Also, note that $$\overline{\cJ}^{\sgn}=\left[\wedge^{\bullet}V\otimes \C[\xx,\yy]\right]^{\sgn}\simeq \C[\xx,\yy]^{\hook}.
$$ 
  
Finally, we get an isomorphism
$$
\DR_n^{\hook} =
\frac{\left[\C[\xx,\yy]\right]^{\hook}}{\left[\C[\xx,\yy]^{S_n}_{+}\right]^{\hook}}=\frac{\left[\wedge^{\bullet}V\otimes \C[\xx,\yy]\right]^{\sgn}}{\cL}= 
\left[\overline{\cJ}/\mm\overline{\cJ}\right]^{\sgn}=\overline{\cJ}/\mm\overline{\cJ}.
$$

\end{proof}

\begin{lemma}
The action of $\CF_k,\CE_k$ on $\HY(\FT_n)$ 
agrees with the action of $F_k,E_k$ on $\cJ$ under the isomorphism from Theorem \ref{thm: GH}.

The action of  differentials $d_N$ on $\HY(\FT_n)$ agrees with the action of operators \eqref{eq: dN algebra} on $\cJ$.
\end{lemma}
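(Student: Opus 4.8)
The plan is to reduce everything to the explicit model $\HY(\FT_n)\simeq\cJ$ furnished by Theorem \ref{thm: GH} and to the behavior of tautological classes on $\HY(\one_n)$ that was already worked out in the proof of Theorem \ref{thm: tautol FT}. Recall that the chain map $\Psi:\bFT_n\to\one_n^y$ of Theorem \ref{thm: GH} identifies $\HY(\FT_n)$ with the ideal $\cJ\subset\C[\xx,\yy]\otimes\wedge(\ttheta)$, and (by the argument in Theorem \ref{thm: tautol FT}, using Theorem \ref{thm: Fk summary}(d) and Proposition \ref{prop: skein}) it commutes with all tautological classes $\CF_k$, hence with $\Phi$ and $\CE_k$ by Corollary \ref{cor: hard Lefshetz}. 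So it suffices to identify the action of $\CF_k$, $\CE_k$ and $d_N$ on all of $\HY(\one_n)=\C[\xx,\yy]\otimes\wedge(\ttheta)$ and then restrict to $\cJ$.

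First I would treat the $\CF_k$: by Theorem \ref{thm: Fk summary}(c) we have $[\CF_k,x_i]=0$ and $[\CF_k,y_i]=x_i^k$ as operators on $\HY(\one_n)$, and $\CF_k$ kills the generator $\delta\in\HY^0(\one_n)$ corresponding to $1\in\C[\xx,\yy]$ (it preserves $a$-degree and there is nothing in the relevant lower degree); an induction on $\yy$-degree then shows $\CF_k$ acts on $\C[\xx,\yy]$ as the derivation $F_k=\sum_i x_i^k\partial_{y_i}$, extended to $\wedge(\ttheta)$ as a derivation commuting with the $\theta_i$ (since $\CF_k$ commutes with the Hochschild differentials, equivalently with multiplication by the $\theta_i$). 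For $\Phi$: by \eqref{eq: def Phi} and Theorem \ref{thm: Fk summary}(f) the involution $\Phi$ exchanges $x_i\leftrightarrow y_i$, and one checks it fixes the $\theta_i$ up to the standard normalization, so $\CE_k=\Phi\CF_k\Phi$ becomes $E_k=\sum_i y_i^k\partial_{x_i}$. For $d_N$: by Proposition \ref{prop: dN functorial} the differential $d_N$ commutes with all Soergel-bimodule maps and with $\CF_k$, so again it descends through $\Psi$; on $\HY(\one_n)=\C[\xx,\yy]\otimes\wedge(\ttheta)$ the differential $d_N$ is by construction the Koszul-type operator contracting $\theta_i$ against $x_i^N$, i.e. $d_N=\sum_i\theta_i^*x_i^N$ as in \eqref{eq: dN algebra}. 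Finally I must check that each of $F_k$, $E_k$, $d_N$ preserves the ideal $\cJ$: for $F_k$, $E_k$ this is Lemma \ref{lem: Ek Fk preserve} (its proof only used that these are first-order derivations and that $S_n$ acts, and $\cJ$ is generated by antisymmetric elements over $\C[\xx,\yy]$ by \cite[Lemma 7.5]{GH}); for $d_N$, it is a derivation-like operator on $\wedge(\ttheta)$ that is $S_n$-equivariant, so it sends antisymmetric generators to antisymmetric elements and hence preserves $\cJ$.

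The main obstacle I anticipate is pinning down the exact normalizations on the $\theta$-variables — namely that $\Psi$ really does intertwine the Hochschild-degree structure with multiplication by the $\theta_i$ on the nose, and that $\Phi$ fixes the $\theta_i$ rather than rescaling them — since a constant discrepancy here would change $d_N$ and $\CE_k$ by scalars. This is exactly the kind of bookkeeping that the paper elsewhere suppresses ("up to some scalar which we will not need"), and the cleanest way around it is to observe that all the operators in question are determined by their commutators with $x_i$, $y_i$ and with $\theta_i$-multiplication together with their value on the single generator $\delta\leftrightarrow\Delta(\xx)$ (or $1\in\HY(\one_n)$), so matching these data suffices; the scalar ambiguity can be absorbed into the isomorphism $\HY(\FT_n)\simeq\cJ$ itself, consistently with the conventions already fixed in Theorem \ref{thm: tautol FT} and Lemma \ref{lem: delta}. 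Everything else is routine: the derivation property of $\CF_k$, $\CE_k$, $d_N$ lets one verify the claimed formulas on generators and propagate by Leibniz, and the compatibility with $\Psi$ is inherited from the skein maps of Proposition \ref{prop: skein} as in the proof of Theorem \ref{thm: tautol FT}.
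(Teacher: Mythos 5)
Your proposal is correct and takes essentially the same route as the paper: reduce to $\HY(\one_n)$ via the map $\Psi$ (which commutes with $\CF_k$, hence with $\Phi$ and $\CE_k$ by Corollary \ref{cor: hard Lefshetz}), read off the operators there from Theorem \ref{thm: Fk summary}(c) and \eqref{eq: def Phi}, and use Proposition \ref{prop: dN functorial} for $d_N$; your extra care about $\theta$-normalizations and preservation of $\cJ$ only fleshes out what the paper leaves implicit. One tiny slip: by Lemma \ref{lem: delta} the generator $\delta\in\HY^0(\FT_n)$ corresponds to $\Delta(\yy)$, not $\Delta(\xx)$, but this is a parenthetical remark and does not affect the argument.
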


\begin{proof}
The proof of the first statement is similar to the proof of Theorem \ref{thm: tautol FT}. 

For the second statement, observe that Rasmussen differentials $d_N$ are given by \eqref{eq: dN algebra} in $\HY(\one_n)$, and the result follows from Proposition \ref{prop: dN functorial}.
\end{proof}

Now we are ready to prove a generalization of Theorem \ref{thm: final} to all $a$-degrees.

\begin{theorem}
\label{thm: final full}
The map $\overline{\pi}_y:\HY(\FT_n)\to \overline{\HHH}(\beta_{n,n+1})$ is surjective. Furthermore, we have an isomorhism
$$
\DR_n^{\hook}\simeq \overline{\HHH}(\beta_{n,n+1})
$$
which agrees with the action of tautological classes and differentials $d_N$.
\end{theorem}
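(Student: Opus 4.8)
The plan is to run the proof of Theorem~\ref{thm: final} again, now dragging along the exterior factor $\wedge^\bullet V$ and the differentials $d_N$.

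\textbf{The map commutes with all operators.} By Lemma~\ref{lem: pi} the chain map $\pi_y$ commutes with every $\CF_k$, and by Proposition~\ref{prop: dN functorial} it commutes with every $d_N$ as well, since it is assembled from maps of (complexes of) Soergel bimodules; Corollary~\ref{cor: hard Lefshetz} then gives commutation with $\Phi$ and hence with the $\CE_k$. Passing to reduced homology as in Corollary~\ref{cor:pi bar} — splitting off the centre-of-mass variables $x_1+\dots+x_n$, $y_1+\dots+y_n$, $\theta_1+\dots+\theta_n$ of the knot $\beta_{n,n+1}$ via \eqref{eq: knots intro} — we get a grading-preserving map
$$
\overline{\pi}_y:\HY(\FT_n)\longrightarrow\overline{\HHH}(\beta_{n,n+1})
$$
commuting with $\CF_k,\CE_k,\Phi,d_N$, annihilating every $x_i$, every $y_i$, and $\theta_1+\dots+\theta_n$, and (by Lemma~\ref{lem: pi}, together with the fact that $\Phi$ is an automorphism) sending both $\delta$ and $\Phi(\delta)$ to nonzero classes.

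\textbf{Factoring through $\DR_n^{\hook}$.} Identify $\HY(\FT_n)\simeq\cJ$ by Theorem~\ref{thm: GH}. Since $\overline{\pi}_y$ kills $\mm\cJ$ and $\langle\theta_1+\dots+\theta_n\rangle\cJ$, it descends, by Lemma~\ref{lem: iota}, to a map
$$
\pi':\DR_n^{\hook}\;\simeq\;\overline{\cJ}/\mm\overline{\cJ}\;\longrightarrow\;\overline{\HHH}(\beta_{n,n+1}).
$$
The operators $F_k,E_k$ and the operators $d_N$ of \eqref{eq: dN algebra} preserve $\cJ$, $\mm\cJ$ and $\langle\theta_1+\dots+\theta_n\rangle\cJ$ — by the Leibniz computation of Lemma~\ref{lem: Ek Fk preserve} together with the $S_n$-equivariance of $d_N$ — hence act on $\DR_n^{\hook}$, and $\pi'$ intertwines them with $\CF_k,\CE_k,d_N$. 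Moreover, by Lemma~\ref{lem: delta} and Theorem~\ref{thm: tautol FT} we have $\delta\leftrightarrow\Delta(\yy)$ and $\Phi$ interchanges $\xx$ with $\yy$, so $\pi'(\Delta(\xx))=\overline{\pi}_y(\Phi(\delta))\ne 0$.

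\textbf{Injectivity.} Let $f\in\DR_n^{\hook}$ be a nonzero homogeneous element. By Lemma~\ref{lem: hooks generate}(b) there is $\varphi\in\C[F_1,\dots,F_{n-1}]\otimes\wedge^\bullet\langle d_1,\dots,d_{n-1}\rangle$ with $\varphi(f)=c\,\Delta(\xx)$ for some $c\ne 0$. Applying the corresponding combination of $\CF_k$'s and $d_N$'s on the homology side,
$$
\varphi(\CF_\bullet,d_\bullet)\,\pi'(f)=\pi'\big(\varphi(f)\big)=c\,\pi'(\Delta(\xx))\ne 0,
$$
so $\pi'(f)\ne 0$; hence $\pi'$ is injective in each bidegree.

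\textbf{Surjectivity, and the main obstacle.} Once $\pi'$ is onto it is an isomorphism, and then $\overline{\pi}_y$ — which is $\pi'$ precomposed with the surjection $\cJ\twoheadrightarrow\DR_n^{\hook}$ — is surjective as well. I would deduce surjectivity from a count of graded dimensions: the $a$-graded Poincaré series of $\overline{\HHH}(\beta_{n,n+1})$ coincides with that of $\DR_n^{\hook}$. For $a=0$ this is Theorem~\ref{thm: dimension}; the higher $a$-degrees come from the full computation of $\overline{\HHH}(T(n,n+1))$ in \cite{H}, equivalently from expanding the bigraded Frobenius characteristic $\nabla e_n$ of $\DR_n$ into hook Schur functions. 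An injective grading-preserving map between finite-dimensional graded vector spaces with equal Hilbert series is an isomorphism, which finishes the proof. This dimension comparison is the one ingredient not already contained in the $a=0$ argument, and is where I expect the genuine difficulty to lie; everything else is a faithful copy of the proof of Theorem~\ref{thm: final} with the exterior factor $\wedge^\bullet V$ and the operators $d_N$ carried through. If one preferred to avoid the external input, one would instead have to establish surjectivity of $\overline{\pi}_y$ directly, for instance by showing that $\overline{\HHH}(\beta_{n,n+1})$ is generated over $\C[\CF_k]$ together with the dual differentials by the image of $\delta$ — but no such structural statement is visible from the tools developed above.
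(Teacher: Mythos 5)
Your proof is correct and takes essentially the same route as the paper: factor $\overline{\pi}_y$ through $\overline{\cJ}/\mm\overline{\cJ}\simeq\DR_n^{\hook}$ via Lemma~\ref{lem: iota}, prove injectivity of the induced map using the co-generation statement Lemma~\ref{lem: hooks generate}(b), and conclude surjectivity from the dimension count supplied by Hogancamp's result in \cite{H}. The only (harmless) deviation is that you establish $\pi'(\Delta(\xx))\neq 0$ by pushing $\Phi(\delta)$ through the $\Phi$-equivariant map $\overline{\pi}_y$, whereas the paper simply cites Theorem~\ref{thm: final}, which already gives an isomorphism in $a$-degree $0$ and hence the nonvanishing directly.
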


\begin{proof}
Recall that 
$$
\HY(\beta_{n,n+1})=\overline{\HHH}(\beta_{n,n+1})\otimes \C[x_1+\ldots+x_n,y_1+\ldots+y_n]\otimes \wedge\langle \theta_1+\ldots+\theta_n\rangle.
$$
We use the map $\pi_y:\HY(\FT_n)\to \HY(\beta_{n,n+1})$, by Lemma \ref{lem: pi} and Proposition \ref{prop: dN functorial} it commutes with the action of tautological classes and differentials.

By Lemma \ref{lem: pi} the map $\overline{\pi}_y:\HY(\FT_n)\to \overline{\HHH}(\beta_{n,n+1})$ factors through $\overline{\cJ}/\mm \overline{\cJ}$ and we get the following diagram:
$$
\begin{tikzcd}
\HY(\FT_n)\simeq \cJ \arrow[twoheadrightarrow]{r} \arrow[bend left,twoheadrightarrow]{rr}{\overline{\pi}_y}& \overline{\cJ}/\mm \overline{\cJ} \arrow{r}{\pi'_y}&  \overline{\HHH}(\beta_{n,n+1})\\
 & \DR_n^{\hook} \arrow{u}{\iota}  \arrow{ur}{\simeq} & .
\end{tikzcd}
$$
The vertical map $\iota$ is the isomorphism from Lemma \ref{lem: iota}. 

Let us prove that the composition $\pi'_y\circ \iota$ is injective. Indeed, by Theorem \ref{thm: final} we have $\pi'_y(\iota(\Delta(\xx)))\neq 0$. Let $f\in \DR_n^{\hook}$ be a nonzero homogeneous polynomial, then by Lemma \ref{lem: hooks generate} there exists a polynomial $\varphi\in \C[F_1,\ldots,F_{n-1}]\otimes \wedge^{\bullet}\langle d_1,\ldots,d_{n-1}\rangle$ such that $\varphi(f)=c\Delta(\xx)$ for $c\neq 0$. Now
$$
\varphi(\pi'_y(\iota(f)))=\pi'_y(\iota(\varphi(f)))=c\pi'_y(\iota(\Delta(\xx)))\neq 0,
$$
hence $\pi'_y(\iota(f))\neq 0$. 

Finally, by the main result of \cite{H} we have $$\dim \DR_n^{\hook}=\dim \overline{\HHH}(\beta_{n,n+1}),$$ and $\pi'_y\circ \iota$ is an isomorphism.
\end{proof}

\end{document}